\newtheorem{thm}{Theorem}[section]
\newtheorem{lem}[thm]{Lemma}
\newtheorem{prop}[thm]{Proposition}
\newtheorem{obs}[thm]{Observation}
\newtheorem{cor}[thm]{Corollary}
\newtheorem{clm}[thm]{Claim}
\theoremstyle{definition}
\newtheorem{defn}[thm]{Definition}
\newtheorem{rem}[thm]{Remark}
\DeclareMathOperator{\Int}{int}
\DeclareMathOperator{\diam}{diam}
\DeclareMathOperator{\scr}{cr}
\DeclareMathOperator{\Rot}{Rot}
\newcommand{\eps}{\varepsilon}
\newcommand{\R}{\mathbb{R}}
\newcommand{\Z}{\mathbb{Z}}
\newcommand{\N}{\mathbb{N}}
\newcommand{\red}{\color{red}}
\newcommand{\Ci}{\mathbb{S}^1}
\newcommand{\B}{\mathcal{B}}
\newcommand{\degr}{\mathrm{deg}}
\title[Parameterized family of pseudo-circle attractors]{Parameterized family of annular homeomorphisms with pseudo-circle attractors}
\begin{document}
\author{Jernej \v Cin\v c}
\address[J. \v Cin\v c]{
	University of Maribor, Koro\v ska 160, 2000 Maribor, Slovenia
	-- and --
	AGH University of Krakow, Faculty of Applied Mathematics,
	al. Mickiewicza 30,
	30-059 Krak\'ow,
	Poland  
	-- and -- National Supercomputing Centre IT4Innovations, University of Ostrava,
	IRAFM,
	30. dubna 22, 70103 Ostrava,
	Czech Republic}
\email{jernej.cinc@um.si}

\author{Piotr Oprocha}
\address[P. Oprocha]{
	AGH University of Krakow, Faculty of Applied Mathematics,
	al. Mickiewicza 30,
	30-059 Krak\'ow,
	Poland -- and -- National Supercomputing Centre IT4Innovations, University of Ostrava,
	IRAFM,
	30. dubna 22, 70103 Ostrava,
	Czech Republic}
\email{oprocha@agh.edu.pl}
\begin{abstract}
	In this paper we construct a paramaterized family of annular homeomorphisms with Birkhoff-like rotational attractors that vary continuously with the parameter, are all homeomorphic to the pseudo-circle, display interesting boundary dynamics and furthermore preserve the induced Lebesgue measure from the circle. Namely, in the constructed family of attractors the outer prime ends rotation number vary continuously with the parameter through the interval $[0,1/2]$. This, in particular, answers a question from [J. London Math. Soc. (2) {\bf 102} (2020), 557--579]. To show main results of the paper we first prove a result of an independent interest, that Lebesgue-measure preserving circle maps generically satisfy the crookedness condition which implies that generically the inverse limits of Lebesgue measure-preserving circle maps are hereditarily indecomposable. For degree one circle maps, this implies that the generic inverse limit in this context is the R.H. Bing's pseudo-circle.
\end{abstract}
\subjclass[2020]{Primary 37E10, 37E30, 37B45, 37C20; Secondary 37A10, 37C15, 37E45}
\keywords{Lebesgue measure, circle maps, pseudo-circle, Brown-Barge-Martin embeddings, strange attractors}

\maketitle

\section{Introduction}

The main goal of this paper is to study a peculiar parameterized family of strange Birkhoff-like annular homeomorphisms with topologically unique rotational attractors\footnote{Following \cite{KorPas} a {\em rotational attractor} is a topological attractor for which the corresponding external prime ends rotation number is non-zero (mod 1).} with pairwise different boundary dynamics which in addition preserve the induced Lebesgue measure from the circle.

To obtain such a family we first prove a result of an independent interest, that Lebesgue-measure preserving circle maps $C_{\lambda}(\Ci)$ generically satisfy the crookedness condition which implies that generically the inverse limits of Lebesgue measure-preserving circle maps are the pseudo-solenoids. For degree one circle maps this implies that the generic inverse limit in this context is the R.H. Bing's pseudo-circle. Pseudo-circle is a {\em continuum} (compact connected metric space) of much interest in Continuum Theory. Bing proved in \cite{BingPacific}, it is {\em hereditarily indecomposable}, meaning that none of its subcontinua can be expressed as a union of two of its proper subcontinua. Since Handel's construction \cite{Handel}, where he realized the pseudo-circle as a minimal set of a $C^{\infty}$ planar diffeomorphisms, it appeared in Dynamical Systems several times (see the introduction of \cite{BCO} for extended information on this topic), most recently Boro\'nski, Kennedy, Liu and Oprocha proved that the pseudo-circle admits a minimal non-invertible self-map \cite{BLKO}.
Generic properties of Lebesgue measure-preserving endomorphisms on one-dimensional manifolds have been recently studied in detail following the appearance of the paper of Bobok and Troubetzkoy \cite{BT}.
In \cite{BCOT1} Bobok, \v Cin\v c, Oprocha and Troubetzkoy showed that s-limit shadowing, a very strong version of the shadowing property, is a generic property in $C_{\lambda}(\Ci)$. Recently, the same authors studied in \cite{BCOT2} generic topological and measure-theoretic expansion properties of maps from $C_{\lambda}(\Ci)$ and proved that locally eventually onto (leo) condition is satisfied on an open dense subset of maps from $C_{\lambda}(\Ci)$ and that measure-theoretic weak mixing is a generic property. For the review of other generic results in $C_{\lambda}(\Ci)$ as well as the connection with the spaces of circle maps preserving any other non-atomic measure with support $\Ci$ and spaces of circle maps with dense periodicity see the recent survey paper \cite{BCOT3}.
As we already announced, the first main result of the paper is the following.

\begin{thm}\label{thm:UniLimPresLeb}
The set $\mathcal{T}$ of all maps from $C_{\lambda}(\mathbb{S}^1)$ such that, if $f\in \mathcal{T}$ then
for every $\delta>0$ there exists a positive integer $n$ so that $f^n$ is $\delta$-crooked, contains a dense $G_{\delta}$ subset of $C_{\lambda}(\mathbb{S}^1)$.
\end{thm}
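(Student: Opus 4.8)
I would follow the usual two-step pattern for a genericity statement: first produce the $G_\delta$ structure, then prove density, with the density argument carrying essentially all of the content. For the topological part, for fixed $n\in\N$ and $\delta>0$ the $\delta$-crookedness of $f^n$ can be written as
\[
\sup_{p,q}\ \inf_{p',q'}\ \max\bigl\{\, d(f^n(p'),f^n(q)),\ d(f^n(q'),f^n(p))\,\bigr\}<\delta,
\]
where $(p,q)$ ranges over pairs spanning an arc (of length bounded away from $1$) and $(p',q')$ over the intermediate pairs on that arc in the prescribed cyclic order. Both parameter sets are compact and vary continuously, and $f\mapsto f^n$ is continuous in the uniform metric, so the left-hand side is attained and depends continuously on $f$; hence $\mathcal{U}_{n,\delta}:=\{f\in C_\lambda(\mathbb{S}^1):f^n\text{ is }\delta\text{-crooked}\}$ is open and $\mathcal{T}=\bigcap_{k\ge1}\bigcup_{n\ge1}\mathcal{U}_{n,1/k}$ is $G_\delta$. (If the precise definition of $\delta$-crookedness used does not make $\mathcal{U}_{n,\delta}$ open because of arcs of length close to $1$, one restricts to arcs of length $\le 1-\eta$; the resulting condition still forces hereditary indecomposability of $\varprojlim(\mathbb{S}^1,f)$, and one works with the corresponding $G_\delta$ subset, which is all the theorem claims.) Since $C_\lambda(\mathbb{S}^1)$ with the uniform metric is a complete metric space (a uniform limit of measure-preserving maps is measure-preserving), by the Baire category theorem it remains to show that each $\bigcup_n\mathcal{U}_{n,1/k}$ is dense.

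For density I would first pass to a convenient dense class: piecewise affine maps are dense in $C_\lambda(\mathbb{S}^1)$ (see \cite{BT}) and, by \cite{BCOT2}, the leo maps form an open dense subset, so the piecewise affine leo maps are dense. Fix such an $h$, together with $\eps>0$ and $\delta=1/k$; the goal is a $g\in C_\lambda(\mathbb{S}^1)$ with $d(g,h)<\eps$ and $g^n$ $\delta$-crooked for some $n$. Since $h$ is leo, a routine compactness argument gives $m\in\N$ and $\rho>0$ with $h^m(J)=\mathbb{S}^1$ for every arc $J$ with $|J|\ge\rho$. The heart of the proof is then a perturbation that introduces controlled folding: I would take $g=\sigma\circ h$ (or $h\circ\sigma$) for a measure-preserving "pleating" map $\sigma\in C_\lambda(\mathbb{S}^1)$ with $d(\sigma,\id)<\eps$ that has tiny displacement but is assembled from rescaled copies of a single fixed piecewise affine fold template $\tau\colon[0,1]\to[0,1]$, all of whose slopes have the same absolute value — so that $\tau$, hence $\sigma$, preserves Lebesgue measure exactly — placed on a fine grid in a nested, self-similar fashion. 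Then $g\in C_\lambda(\mathbb{S}^1)$ (a composition of measure-preserving maps, within $\eps$ of $h$ because $\sigma$ is within $\eps$ of $\id$), and one shows that, combining the leo/expansion of $h$ with the folding built into $\sigma$, the laps of $g^n$ shrink to $0$ and on every arc of length bounded away from $1$ the image under $g^n$ oscillates finely enough that $g^n$ becomes uniformly $\delta_n$-crooked with $\delta_n\to0$; in particular $g^n$ is $\delta$-crooked for $n$ large. An alternative is to install a single "crooked seed" on one short arc via a local perturbation lemma of the type used in \cite{BCOT1,BCOT2} and argue that leo-ness propagates it everywhere while composition refines its scale. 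Note that degree plays no role: Theorem~\ref{thm:UniLimPresLeb} asks only for crookedness, and the degree-one restriction needed to identify the inverse limit with the pseudo-circle is imposed afterwards.

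The main obstacle is precisely the construction and verification of $\sigma$. One must simultaneously (i) keep $\sigma$ within the given $\eps$ of the identity while forcing enough folding that some power of $g$ is $\delta$-crooked; (ii) respect the Lebesgue-measure constraint exactly throughout the surgery — this rules out the flexible folding perturbations available in the purely topological setting and is exactly what forces the equal-absolute-value-slope template; and, most delicately, (iii) ensure that $g^n$ is $\delta$-crooked \emph{uniformly over all short pairs of points}, i.e.\ that no arc survives on which $g^n$ is still nearly monotone or oscillates too little, since a single such arc destroys $\delta$-crookedness. Making (iii) rigorous requires a combinatorial estimate of how crookedness accumulates under composition — essentially that the folds of $\sigma$ multiply under iteration and stay suitably distributed so that the image of every short arc doubles back repeatedly at scale $<\delta$ — and proving such a composition lemma inside the measure-preserving category is the technical core of the argument.
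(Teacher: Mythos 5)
Your proposal identifies the same overall strategy the paper uses, but leaves the central technical lemma — which carries the real weight — as an acknowledged gap, so the argument is incomplete rather than wrong. Concretely: the paper's density argument starts from piecewise-linear \emph{Markov} leo maps (Lemma~\ref{lem:Markov}, following \cite{BT}), upgrades them to \emph{admissible} maps (piecewise linear, leo, $|\tilde f'|\geq 4$; Lemma~\ref{lem:LeoApproxByAdm}) rather than merely ``PL leo'' as you propose, and then applies the measure-preserving analogue of the Minc--Transue perturbation lemma (Lemma~\ref{lem:LemMT}, proved in full as Lemma~\ref{lem:LemMTadjusted}). That lemma, together with its supporting material, is precisely the ``composition lemma inside the measure-preserving category'' you identify as the technical core and leave open. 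Filling it in requires an explicit family of pleating maps $\lambda_{n,k}$ (Definition~\ref{def:lambda}) built from the Lewis--Minc simple $n$-crooked templates $\sigma_n$; a box-counting argument that these maps preserve Lebesgue measure on $\mathbb{S}^1$ (Proposition~\ref{prop:LebPres}), which is not automatic from ``equal$|$slope$|$ template'' because the pieces wrap around the circle; and a detailed list of metric properties of $\lambda_{n,k}$ (Lemma~\ref{lem:MincUpdt}: near-identity bound, local crookedness, non-contraction, inclusion $A\subset\lambda_{n,k}(A)$, Lipschitz-type control) that are then combined with the admissibility of the base map to get the uniform accumulation-of-crookedness estimate for $G=F\circ\lambda_{n,k}$. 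Your phrase ``the folds multiply under iteration and stay suitably distributed'' is the right intuition, but the proof of that is nontrivial (see the two-case analysis inside Lemma~\ref{lem:LemMTadjusted}) and is exactly where the equal-slope constraint, the admissibility lower bound $4$, and the choice of $\gamma_{m+1}$ small relative to $s^{-N}$ and $\delta$ all interact. On the $G_\delta$ side, the paper avoids your sup--inf--max continuity argument (which, as you note, is delicate near arcs of full length and depends on whether the inequalities are strict) by instead invoking stability of crookedness under uniform perturbation (Lemma~\ref{lem:crooked}, i.e.\ \cite[Proposition~2]{MT}): if $f^n$ is $(1/k-\delta)$-crooked, a small ball around $f$ lies in $A_k$. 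That is a cleaner route to openness than re-deriving it from a minimax expression. Finally, you write $g=\sigma\circ h$ ``or $h\circ\sigma$''; the paper uses the composition with $\lambda_{n,k}$ applied \emph{first} (i.e.\ $F_\beta\circ\lambda_{n,k,\alpha}$), and the order matters for the estimates in Lemma~\ref{lem:LemMTadjusted} — $\lambda_{n,k}$ installs fine folds whose scale is then blown up, not contracted, by the expanding admissible map.
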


The result analogous to Theorem~\ref{thm:UniLimPresLeb} and the following Corollary~\ref{cor:MincTransue} holds also in the setting of the closure of maps with dense set of periodic points, analogously as in \cite{CO}.

\begin{cor}\label{cor:MincTransue}
The inverse limit with any $C_{\lambda}(\Ci)$-generic map as a single bonding map is hereditarily indecomposable. In particular, for the subset of degree $1$ circle maps, the inverse limit with any $C_{\lambda}(\Ci)$-generic  map as a single bonding map is the pseudo-circle.
\end{cor}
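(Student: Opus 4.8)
The plan is to deduce the corollary from Theorem~\ref{thm:UniLimPresLeb} by combining the standard passage from crooked bonding maps to hereditary indecomposability with the topological characterisation of the pseudo-circle. Fix a map $f$ lying in the dense $G_\delta$ subset of $C_\lambda(\Ci)$ furnished by Theorem~\ref{thm:UniLimPresLeb}; for the ``in particular'' part we moreover take $\deg f=1$, which is legitimate because the maps of a fixed degree form a clopen subset of $C_\lambda(\Ci)$, so a dense $G_\delta$ subset of $C_\lambda(\Ci)$ meets the clopen set of degree-one maps in a set that is dense and $G_\delta$ there. Observe that every element of $C_\lambda(\Ci)$ is surjective: its image is compact and of full Lebesgue measure, hence equals $\Ci$. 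Consequently $\varprojlim(\Ci,f)$ surjects onto $\Ci$ via the first projection and is a non-degenerate continuum.

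Next I would telescope the inverse system. Since $f\in\mathcal T$, choose recursively integers $0=m_0<m_1<m_2<\cdots$ such that, for every $k\ge1$, the map $f^{\,m_k-m_{k-1}}$ is $(1/k)$-crooked. Then $\varprojlim(\Ci,f)$ is homeomorphic to the inverse limit of the telescoped system $(\Ci,h_k)_{k\ge0}$ with $h_k=f^{\,m_{k+1}-m_k}$, that is, to an inverse sequence of circles whose consecutive bonding maps are $\delta_k$-crooked with $\delta_k\to0$. By the classical relationship between crookedness and hereditary indecomposability underlying Bing's constructions of the pseudo-arc and pseudo-circle (in the inverse-limit formulation used here, in the spirit of Minc and Transue), any such inverse limit of arcs or circles is hereditarily indecomposable; hence so is $\varprojlim(\Ci,f)$. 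This proves the first assertion.

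For the second assertion, suppose $\deg f=1$. As an inverse limit of circles with surjective bonding maps, $\varprojlim(\Ci,f)$ is circularly chainable, and every circularly chainable continuum embeds in the plane. Moreover its first \v{C}ech cohomology group with integer coefficients is the direct limit of the system $\Z\xrightarrow{\cdot\deg f}\Z\xrightarrow{\cdot\deg f}\cdots$, which for $\deg f=1$ is isomorphic to $\Z\neq0$; by Alexander duality $\varprojlim(\Ci,f)$ therefore separates the plane and, in particular, is not chainable. Thus $\varprojlim(\Ci,f)$ is a non-degenerate, hereditarily indecomposable, plane-separating, circularly chainable plane continuum, and by the topological characterisation of the pseudo-circle (Fearnley, Rogers) it is homeomorphic to the pseudo-circle.

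The only point requiring genuine care is the crookedness bookkeeping: one must check that $\delta$-crookedness of circle maps in the sense used in Theorem~\ref{thm:UniLimPresLeb} is exactly the hypothesis feeding the hereditary-indecomposability argument after telescoping, and that iteration preserves the relevant form of this property. The remaining ingredients --- surjectivity, circular chainability, the cohomology computation, planarity of circle-like continua, and the uniqueness of the pseudo-circle --- are standard.
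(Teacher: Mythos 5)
The crucial step that fails is your assertion that ``every circularly chainable continuum embeds in the plane.'' This is false: solenoids, for instance the dyadic solenoid $\varprojlim(\Ci,\,z\mapsto z^2)$, are circle-like (circularly chainable) continua that do not embed in the plane --- their first \v Cech cohomology $\Z[1/2]$ is not free abelian, which Alexander duality shows is an obstruction to planarity. Consequently you cannot simply conclude planarity from circular chainability, invoke Alexander duality to get plane-separation, and then feed Fearnley's uniqueness theorem for plane-separating circle-like hereditarily indecomposable continua. As written, the ``in particular'' half of the corollary is not proved.

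The paper's route around this is to cite Fearnley's 1972 classification of \emph{all} hereditarily indecomposable circle-like continua (the paper's reference Fearnley2, Theorem~3.3, via Proposition~\ref{prop:pseudo-circle}) rather than only the 1970 uniqueness-among-planar result: that classification identifies the pseudo-circle intrinsically as the hereditarily indecomposable circle-like continuum whose bonding degrees are eventually~$1$ (equivalently $\check H^1\cong\Z$), with no prior planarity hypothesis. Your argument becomes correct if you replace the false embedding claim by this classification, or if you separately establish planarity of $\varprojlim(\Ci,f)$ in the degree-$1$ case (for example via the BBM embedding into an annulus, exactly as carried out in Section~\ref{sec:family} of the paper). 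The rest of your proposal --- surjectivity of Lebesgue measure-preserving circle maps, the clopen-degree observation, telescoping $f\in\mathcal T$ to obtain an inverse sequence with $\delta_k$-crooked bonding maps and $\delta_k\to 0$, deducing hereditary indecomposability, and computing $\check H^1\cong\Z$ for $\deg f=1$ --- is correct and matches what the paper delegates to its references.
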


We obtain the required annular homeomorphisms through a very useful technique called BBM (Brown-Barge-Martin) embeddings. 
BBM embedding is a technique that allows one to produce a parameterized family of strange attractors from a (possibly very complicated) map acting on a boundary retract of a manifold with boundary.  The basic objects in the BBM construction are inverse limits and natural extensions of the underlying bonding maps. The original idea goes back to the paper of Barge and Martin \cite{BM}, where the authors constructed strange attractors from a wide class of inverse limits. One of the crucial steps for this technique to work is the usage of Brown's approximation theorem \cite{Bro}.
A parameterized version of BBM embeddings was described by Boyland, de Carvalho and Hall in \cite{3G-BM} and boundary dynamics of BBM embeddings of tent inverse limits was studied in detail by the same authors in \cite{BdCH} and subsequently by Anu\v si\'c and \v Cin\v c \cite{AC}. A measure theoretic aspect of these embeddings was studied in \cite{BdCH1}.
Subsequently, BBM technique was used to construct new parameterized families of strange attractors in \cite{BCL} and \cite{CO}. In \cite{BCL}, Boro\'nski, \v Cin\v c and Liu asked in Question 2 if there exist a parametrized family of sphere homeomorphisms such that for any parameter, homeomorphisms preserve a cofrontier attractor which is homeomorphic to the pseudo-circle. Note that our
Theorem~\ref{lem:BBM1} stated below, in particular, answers this question in the affirmative. Furthermore, our constructed family of homeomorphisms also has interesting topological and measure-theoretic expansion properties.

The following definition will help the reader to understand certain not so standard measure-theoretic parts of the main theorem of this paper.

\begin{defn}\label{def:induced} Let $X$ be a Euclidean space with Lebesgue measure $\lambda$ and let $f\colon X\to X$ be a (surjective) map. An invariant measure $\hat{\mu}_f$ for the natural extension $\hat f\colon \hat X\to \hat X$ is called the \emph{inverse limit physical measure} if $\hat{\mu}_f$ has a basin $\hat B$ so that  $\lambda(\pi_0(\hat B))>0$.
\end{defn}

\begin{thm}\label{lem:BBM1}
Let $\mathcal{T}$ be a dense $G_{\delta}$ subset of $C_{\lambda}(\Ci)$ from Theorem~\ref{thm:UniLimPresLeb}.
 There is a parametrized family of circle maps $\{f_t\}_{t\in [0,1]}\subset \mathcal{T}\subset C_{\lambda}(\Ci)$  so that every $f_t$ is weakly mixing with respect to $\lambda$ and a parametrized family of annular homeomorphisms $\{\Phi_{t}\}_{t\in[0,1]}\subset \mathcal{H}(\mathbb{A}, \mathbb{A})$ varying continuously with $t$ having $\Phi_t$-invariant  pseudo-circle cofrontier Birkhoff-like attractors $\Lambda_t\subset \mathbb{A}$ for every $t\in [0,1]$ so that 
	\begin{itemize}
		\item[(a)]\label{item(a)}  $\Phi_t|_{\Lambda_t}$ is topologically conjugate to the natural extension $\hat{f}_{t}\colon \hat{\mathbb{S}}^1\to \hat{\mathbb{S}}^1$.
		\item[(b)]  The inverse limit measure $\hat{\mu}_t$ induced by $\lambda$ is (up to an isomorphism) inverse limit physical on $\mathbb{A}$ and $\Phi_t$ is weakly mixing with respect to $\hat{\mu}_t$.
   Measures $\hat{\mu}_t$ vary continuously in weak* topology.
  \item[(c)]\label{item(b)} The attractors $\{\Lambda_t\}_{t\in [0,1]}$ vary continuously in the Hausdorff metric.  
		\item[(d)]\label{item(c)} The outer prime ends rotation numbers of homeomorphisms $\Phi_{t}$ vary continuously with $t$ in the interval $[0,1/2]$. $\Lambda_t$ are rotational attractors for all $t\in (0,1/2]$.
		\item[(e)] There are uncountably many dynamically non-equivalent planar embeddings of the pseudo-circle in the family $\{\Lambda_t\}_{t\in[0,1]}$.
		\end{itemize}
\end{thm}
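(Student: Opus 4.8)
The plan is to first build the parametrized family $\{f_t\}_{t\in[0,1]}$ of circle maps inside $\mathcal{T}$ carrying the correct ``boundary rotation'' combinatorics, and then run the parametrized Brown--Barge--Martin construction of \cite{3G-BM} on it; properties (a)--(e) then follow from standard BBM facts, the prime-ends analysis of \cite{BdCH,AC}, and the measure-theoretic input of \cite{BCOT1,BCOT2,BdCH1}. For the family I would follow and adapt the scheme of \cite{CO}, building a continuous arc $t\mapsto f_t$ of degree-one, locally eventually onto maps whose eventual $\delta$-crookedness (hence membership in $\mathcal{T}$) is verified directly from the construction, and whose kneading-type data governing the outer prime-ends rotation number interpolates continuously so as to sweep $[0,1/2]$. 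To sit inside $C_\lambda(\Ci)$ one may conjugate each model map by a circle homeomorphism sending $\lambda$ to a model-invariant measure equivalent to $\lambda$ (continuously in $t$), which changes neither the embedded dynamics nor the outer prime-ends rotation number, and then perturb slightly into the dense $G_\delta$ consisting of maps in $\mathcal{T}$ that are also leo and weakly mixing for $\lambda$ (using \cite{BCOT2}), keeping perturbations small enough to preserve robustly realized rotation numbers; alternatively one realizes such an arc directly by a Baire-category argument in the path-connected space of arcs in $C_\lambda(\Ci)$. Since each $f_t$ has degree one and is eventually crooked, $\varprojlim(\Ci,f_t)$ is the pseudo-circle by Corollary~\ref{cor:MincTransue}.

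Next I would embed $\Ci$ as a core boundary retract of $\mathbb{A}$ and extend each $f_t$, continuously in $t$, to a near-homeomorphism $F_t\colon\mathbb{A}\to\mathbb{A}$ contracting a neighbourhood of $\Ci$ onto it; by Brown's theorem \cite{Bro}, $\varprojlim(\mathbb{A},F_t)\cong\mathbb{A}$, and the induced shift homeomorphism $\Phi_t$ has attractor $\Lambda_t=\varprojlim(\Ci,f_t)$. Standard BBM theory \cite{BM,3G-BM} gives that $\Phi_t|_{\Lambda_t}$ is topologically conjugate to $\hat f_t$ and that $\Lambda_t$ is a cofrontier attractor separating the two ends of $\mathbb{A}$, which is (a); the parametrized version of \cite{3G-BM} yields Hausdorff continuity of $t\mapsto\Lambda_t$, which is (c).

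For (b), weak mixing of $f_t$ for $\lambda$ passes to the natural extension, so $\Phi_t|_{\Lambda_t}\cong\hat f_t$ is weakly mixing for $\hat\mu_t$ (the natural extension of $\lambda$, supported on $\Lambda_t$), and weak* continuity of $t\mapsto\hat\mu_t$ follows from uniform continuity of $t\mapsto f_t$. For physicality I would argue as in \cite{BdCH1}: since $F_t$ contracts onto $\Ci$ and $f_t$ is ergodic for $\lambda$, Lebesgue-almost every point of a neighbourhood of $\Lambda_t$ in $\mathbb{A}$ has a $\Phi_t$-orbit converging to $\Lambda_t$ whose empirical measures $\nu$ satisfy $(\pi_0)_*\nu=\lambda$ (Birkhoff at coordinate zero); as an $\hat f_t$-invariant measure on $\varprojlim(\Ci,f_t)$ is determined by its projection to $\Ci$, we get $\nu=\hat\mu_t$, so $\hat\mu_t$ is, up to isomorphism, inverse limit physical in the sense of Definition~\ref{def:induced}.

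Finally, for (d) I would use the description in \cite{BdCH,AC} of the prime-ends compactification of a complementary domain of a BBM cofrontier attractor in terms of the bonding map's combinatorics: this identifies the outer prime-ends rotation number $\rho(t)$ of $\Phi_t$ with the continuous function of $t$ built in Step~1, so $t\mapsto\rho(t)$ is continuous, attains every value in $[0,1/2]$, and is nonzero for $t\in(0,1/2]$, making $\Lambda_t$ a rotational attractor there. For (e), the outer prime-ends rotation number is invariant under conjugacy by homeomorphisms of the ambient surface, and distinct values in $[0,1/2]$ stay distinct even after the orientation-reversing identification $\rho\mapsto 1-\rho$; since $\rho$ takes uncountably many values, $\{\Lambda_t\}_{t\in[0,1]}$ contains uncountably many dynamically non-equivalent planar embeddings of the pseudo-circle. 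The main obstacle is Step~1: reconciling, continuously in $t$ and within the measure-preserving class, the heavy folding required for eventual crookedness (hence for membership in $\mathcal{T}$ and for the pseudo-circle) with the controlled winding needed to prescribe the outer prime-ends rotation number --- the very tension that confines the achievable rotation numbers to $[0,1/2]$.
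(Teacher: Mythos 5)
Your high-level plan mirrors the paper's (build $\{f_t\}\subset\mathcal{T}\subset C_\lambda(\Ci)$ with the right combinatorics, run the parametrized BBM of \cite{3G-BM} for (a)--(c), invoke Brechner and Barge for the prime end analysis, and \cite{KRS}/\cite{BdCH1} for the measure statements). But the step you yourself flag as the main obstacle is exactly where the proposal has a genuine gap, and your suggested resolution would fail. The idea of ``perturbing slightly into the dense $G_\delta$ ... keeping perturbations small enough to preserve robustly realized rotation numbers'' does not work, because the outer prime end rotation number of a BBM cofrontier attractor over a circle bonding map is \emph{not} robust under arbitrarily small $C^0$ perturbation of the bonding map: a tiny change can destroy the accessible--point structure in $\varprojlim(\Ci,f)$ (this is precisely what the paper's Figure~\ref{fig:circle} illustrates). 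For interval maps the two endpoints of $I$ remain accessible for free, which is why \cite{CO}, \cite{BdCH} and \cite{AC} can read off boundary dynamics combinatorially; those references address tent/interval inverse limits and give no combinatorial handle on accessibility for degree-one circle bonding maps. The paper's actual construction of the family therefore cannot be replaced by generic perturbation: one must interlace perturbations with compensating small rotations ($\lambda_{n,k,\alpha}$, Lemma~\ref{lem:pi}) so that a distinguished ``leftward maximum'' $p$ of the lift survives every stage and stays accessible in the limit, couple this with the half-period symmetry of Lemma~\ref{lem:symm-half} so that at two parameter values $p$ is fixed resp.\ $2$-periodic, and design the BBM unwrapping (Figure~\ref{fig:lifting}) so that the radial segment over $p$ is invariant resp.\ swapped. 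That is what pins the endpoint rotation numbers at $0$ and $1/2$ before Barge's continuity lemma is applied.

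Two further omissions: you do not address why the $\Lambda_t$ are \emph{Birkhoff-like}, which the paper derives by a rotation-set computation showing $\Rot(\tilde G_\beta)$ is a non-degenerate interval inherited from the explicit base map $f$; and weak mixing of the limit maps $G_\beta$ (not just of the piecewise-$C^2$ intermediate stages) requires a separate argument -- the paper runs a telescoping Birkhoff-sum estimate under sufficiently fast convergence -- since weak mixing is not a $G_\delta$ condition that one can simply ``perturb into'' while also keeping the rigid accessibility structure above.
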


 Our constructed family of homeomorphisms in Theorem~\ref{lem:BBM1} is in annulus but both boundary components are closed and invariant under the homeomorphisms, thus by collapsing each of the components points we can get a family of sphere homeomorphisms with analogous properties as in the theorem.

Since the attractors in the parameterized family are pairwise homeomorphic to each other, the main difficulty of the construction lies in determining that the embeddings support a non-conjugated dynamics. 
Recently, a similar result \cite{CO} was obtained starting from interval maps and constructing a family of planar homeomorphisms with pseudo-arc attractors so that the prime ends rotation numbers vary continuously with the parameter on the interval $[0,1/2]$. The construction in \cite{CO} is, however, much simpler because of the presence of two endpoints 0 and 1 of $I$, see Figure~\ref{fig:interval}. There, we attach arcs that we keep accessible through the inverse limit procedure and we either make them fixed (prime end rotation $0$) or permute them, which gives the prime end rotation number of the attractor $1/2$.
If we work with circle maps there is no clear way how to ensure that a point which is accessible in some level of the construction stays accessible after perturbing the map, see Figure~\ref{fig:circle}.

Our approach to deal with this difficulty is as follows. To maintain the periodicity of selected points we apply small rotations after perturbations. This procedure is delicate and needs to be done with special care since one can revert desired properties achieved by (and needed from) the perturbations.
Moreover, in the process of the proof, one of the main steps is to ensure that the associated "water pouring maps" do not change image of some points, however the original maps remain sufficiently crooked.
Furthermore, we need to apply both operations (rotations and perturbations) to the whole family of maps simultaneously which makes the procedure even more delicate.
To make certain points accessible we, in addition, need to define the unwrappings in the BBM construction carefully, see Figure~\ref{fig:lifting}.

Our article is structured as follows. In Section~\ref{sec:pre} we give general preliminaries needed in the basic setting of our article. In Section~\ref{sec:thm1.1} we prove Theorem~\ref{thm:UniLimPresLeb}. The proof has a similar flavour as the proof of Theorem 1.1. from \cite{CO} however some delicate modifications are needed, therefore these different parts of the proofs are written out completely. Most importantly, several changes are required in the proof of Lemma~\ref{lem:MincUpdt} compared to \cite{CO}. The rest of the article (Section \ref{sec:family}) is dedicated to proving Theorem~\ref{thm:main}.

\begin{figure}
\begin{tikzpicture}[scale=0.8]
\draw[solid](0,0) circle (2);
\draw[ultra thick](-1,0)--(1,0);
\draw[ultra thick](-1,0)--(1,0);
\node at (0.3,-0.5) {\small $I$};
\draw[teal,thick] (1,-0.2)--(0,-0.2)--(1/2,-0.1)--(-1/2,-0.1)--(1/2,0)--(-1/2,0.1)--(0,0.2)--(-1,0.2);
\node[circle,fill, inner sep=1.5] at (-1,0){};
\node[circle,fill, inner sep=1.5] at (1,0){};
\node at (-0.3,0.5) {\small {\color{teal} $f$}};
\draw[red](-1,0)--(-2,0);
\draw[red](1,0)--(2,0);
\end{tikzpicture}
\caption{In the interval case, if we use the BBM construction, there is a an intuitively simple procedure that allows the endpoints of the interval fixed and accessible provided they are fixed points or of period two of the acting unwrapping of a map $f$. Such a procedure was used e.g. in \cite{BCL} as well as in \cite{CO}.   Lack of endpoints disables its application for the circle.}\label{fig:interval}
\end{figure}
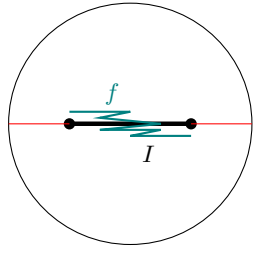

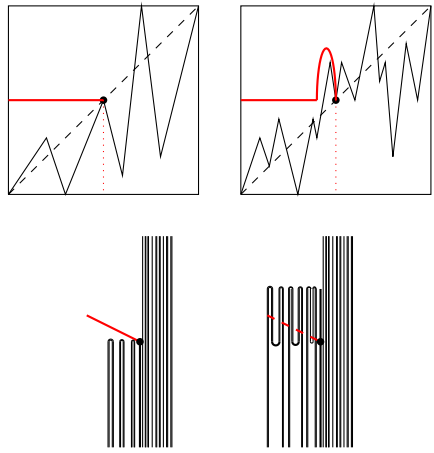
\begin{figure}
\begin{tikzpicture}[scale=2.5]
\draw[dashed] (0,0)--(1,1);
\draw (0,0)--(1,0)--(1,1)--(0,1)--(0,0);
\draw (0,0)--(0.2,0.3)--(0.3,0)--(0.5,0.5)--(0.6,0.1)--(0.7,1)--(0.8,0.2)--(1,1);
\node[circle,fill, inner sep=1] at (0.5,0.5){};
\draw[red,thick] (0,0.5)--(0.5,0.5);
\draw[red,dotted] (0.5,0.5)--(0.5,0);
\end{tikzpicture}
\hspace{0.3cm}
\begin{tikzpicture}[scale=2.5]
\draw (0,0)--(1,0)--(1,1)--(0,1)--(0,0);
\draw[dashed] (0,0)--(1,1);
\draw (0,0)--(0.1,0.3)--(0.15,0.15)--(0.2,0.4)--(0.3,0)--(0.38,0.4)--(0.4,0.3)--(0.47,0.7)--(0.5,0.5)--(0.53,0.7)--(0.6,0.5)--(0.7,1)--(0.73,0.6)--(0.76,0.7)--(0.8,0.2)--(0.87,0.8)--(0.93,0.5)--(1,1);
\node[circle,fill, inner sep=1] at (0.5,0.5){};
\draw[red,thick] (0,0.5)--(0.4,0.5);
\draw[domain=0:180,red,thick] plot ({0.45+0.05*cos(\x)}, {0.5+0.275*sin(\x)});
\draw[red,dotted] (0.5,0.5)--(0.5,0);
\end{tikzpicture}

\vspace{0.5cm}
\begin{tikzpicture}[scale=0.7,rotate=90]
		\draw (5,1.6)--(7,1.6);
		\draw (5,1.5)--(7,1.5);
		\draw (5,1.58)--(7,1.58);
		\draw (5,1.52)--(7,1.52);
		\draw (5,1.38)--(7,1.38);
		\draw (5,1.3)--(7,1.3);
		\draw (5,1.365)--(7,1.365);
		\draw (5,1.315)--(7,1.315);
		\draw (5,1.16)--(7,1.16);
		\draw (5,1.1)--(7,1.1);
		\draw (5,1.15)--(7,1.15);
		\draw (5,1.11)--(7,1.11);
		\draw (5,0.4)--(9,0.4);
		\draw (5,0.5)--(9,0.5);
		\draw (5,0.42)--(9,0.42);
		\draw (5,0.48)--(9,0.48);
		\draw (5,0.62)--(9,0.62);
		\draw (5,0.7)--(9,0.7);
		\draw (5,0.635)--(9,0.635);
		\draw (5,0.685)--(9,0.685);
		\draw (5,0.84)--(9,0.84);
		\draw (5,0.9)--(9,0.9);
		\draw (5,0.85)--(9,0.85);
		\draw (5,0.89)--(9,0.89);
		\draw[red,thick] (7.5,2)--(7,1);
		\draw[domain=270:450] plot ({7+0.05*cos(\x)}, {1.55+0.05*sin(\x)});
		\draw[domain=270:450] plot ({7+0.03*cos(\x)}, {1.55+0.03*sin(\x)});
		\draw[domain=270:450] plot ({7+0.04*cos(\x)}, {1.34+0.04*sin(\x)});
		\draw[domain=270:450] plot ({7+0.025*cos(\x)}, {1.34+0.025*sin(\x)});
		\draw[domain=270:450] plot ({7+0.03*cos(\x)}, {1.13+0.03*sin(\x)});
		\draw[domain=270:450] plot ({7+0.02*cos(\x)}, {1.13+0.02*sin(\x)});
		\draw[thick] (5, 1)--(7,1);
		\node[circle,fill, inner sep=1] at (7,1){};
		\draw (5,0.95)--(9,0.95);
		\draw (5,0.77)--(9,0.77);
		\draw (5,0.56)--(9,0.56);
		\end{tikzpicture}
		\hspace{1cm}
		\begin{tikzpicture}[scale=0.7,rotate=90]
		\draw (5,1.6)--(8,1.6);
		\draw (7,1.52)--(8,1.52);
		\draw (5,1.58)--(8,1.58);
		\draw (7,1.54)--(8,1.54);
		\draw (7,1.4)--(8,1.4);
		\draw (5,1.34)--(8,1.34);
		\draw (7,1.42)--(8,1.42);
		\draw (5,1.355)--(8,1.355);
		\draw[domain=270:450] plot ({8+0.04*cos(\x)}, {1.56+0.04*sin(\x)});
		\draw[domain=270:450] plot ({8+0.03*cos(\x)}, {1.56+0.03*sin(\x)});
		\draw[domain=270:450] plot ({8+0.04*cos(\x)}, {1.38+0.04*sin(\x)});
		\draw[domain=270:450] plot ({8+0.025*cos(\x)}, {1.38+0.025*sin(\x)});
		\draw[domain=90:270,thick] plot ({7+0.06*cos(\x)}, {1.47+0.06*sin(\x)});
		\draw (5,2)--(8,2);
		\draw (7,1.9)--(8,1.9);
		\draw (5,1.98)--(8,1.98);
		\draw (7,1.92)--(8,1.92);
		\draw (7,1.78)--(8,1.78);
		\draw (5,1.7)--(8,1.7);
		\draw (7,1.765)--(8,1.765);
		\draw (5,1.715)--(8,1.715);
		\draw[domain=270:450] plot ({8+0.05*cos(\x)}, {1.95+0.05*sin(\x)});
		\draw[domain=270:450] plot ({8+0.03*cos(\x)}, {1.95+0.03*sin(\x)});
		\draw[domain=270:450] plot ({8+0.04*cos(\x)}, {1.74+0.04*sin(\x)});
		\draw[domain=270:450] plot ({8+0.025*cos(\x)}, {1.74+0.025*sin(\x)});
		\draw[domain=90:270,thick] plot ({7+0.069*cos(\x)}, {1.841+0.069*sin(\x)});
		\draw (5,1.26)--(8,1.26);
		\draw (7,1.19)--(8,1.19);
		\draw (5,1.24)--(8,1.24);
		\draw (7,1.14)--(8,1.14);
		\draw (5,1.10)--(8,1.10);
		\draw (5,1.08)--(8,1.08);
		\draw[domain=270:450] plot ({8+0.04*cos(\x)}, {1.22+0.04*sin(\x)});
		\draw[domain=270:450] plot ({8+0.03*cos(\x)}, {1.215+0.03*sin(\x)});
		\draw[domain=270:450] plot ({8+0.04*cos(\x)}, {1.115+0.03*sin(\x)});
		\draw[domain=270:450] plot ({8+0.025*cos(\x)}, {1.108+0.02*sin(\x)});
		\draw[domain=90:270] plot ({7+0.025*cos(\x)}, {1.165+0.025*sin(\x)});
		\draw (5,0.4)--(9,0.4);
		\draw (5,0.5)--(9,0.5);
		\draw (5,0.42)--(9,0.42);
		\draw (5,0.48)--(9,0.48);
		\draw (5,0.62)--(9,0.62);
		\draw (5,0.7)--(9,0.7);
		\draw (5,0.635)--(9,0.635);
		\draw (5,0.685)--(9,0.685);
		\draw (5,0.84)--(9,0.84);
		\draw (5,0.9)--(9,0.9);
		\draw (5,0.85)--(9,0.85);
		\draw (5,0.89)--(9,0.89);
		\draw[red,thick,dashed] (7.5,2)--(7,1);
		\draw[thick] (5, 1)--(8,1);
		\node[circle,fill, inner sep=1] at (7,1){};
		\draw (5,0.95)--(9,0.95);
		\draw (5,0.77)--(9,0.77);
		\draw (5,0.56)--(9,0.56);
		\end{tikzpicture}
\caption{In the upper row on the left there is a picture of a circle map (represented on the interval) and we denote a certain value of local extremum that we want to make accessible. The right upper picture represent this maps after a perturbation. In the lower row we  depicted how such a point may be positioned in the inverse limit. In the
 continuum on the left
it can indeed be accessed by a red arc but in the right  one it cannot be accessed any more. 
}\label{fig:circle}
\end{figure}

\section{Preliminaries}\label{sec:pre}

Let $\mathbb{N}:=\{1,2,3,\ldots\}$ and $\mathbb{N}_0:=\mathbb{N}\cup\{0\}$, let $I:=[0,1]\subset \mathbb{R}$ denote the unit interval and $\mathbb{S}^1$ the unit circle.  For $x,y\in\mathbb{S}^1$ let $d(x,y)$ denote the minimal normalized arc-length distance on $\Ci$ between $x$ and $y$. 

Consider a continuous map $f\colon~\mathbb S^1\to \mathbb S^1$ of
\emph{degree} $\degr(f)\in\Z$. Let $\tilde f\colon~\R\to\R$ be a \emph{lifting} of $f$, i.e., a continuous map for which
\begin{equation}\label{e:10}\phi\circ \tilde f = f\circ\phi\text{ on }\R,
\end{equation}
where $\phi\colon~\R\to\Ci$ is defined by $\phi(x)= e^{2\pi ix}$. Then $\tilde f(x + 1) = \tilde f(x) + \mathrm{deg}(f)$ for each $x\in\R$.  
If $F=\tilde f\vert [0,1)\pmod 1$, we say that $F\colon~[0,1)\to [0,1)$ is a \emph{representative} of $f$. Note that since two liftings of $f$ differ by an integer constant, $F$ does not depend on a concrete choice of a lifting of $f$.
Note that $f$ is onto if and only if its representative $F=\tilde{f}| [0,1)\pmod 1$ is onto.

Let  $\lambda$ denote the \emph{normalized Lebesgue measure} on $\Ci$ and $\B$ the family of all \emph{Borel sets} in $\Ci$. In this paper we will work with \emph{continuous maps from $\Ci$ into $\Ci$ preserving $\lambda$}, which we denote by
$$ C_{\lambda}(\Ci):=\{f\colon~\Ci\to \Ci\colon~\forall
A\in\B,~\lambda(A)=\lambda(f^{-1}(A))\}. $$
We consider the set $C_{\lambda}(\Ci)$ equipped with the \emph{uniform metric} $\rho$:
$$\rho (f,g) := \sup_{x \in \Ci} |f(x) - g(x)|.$$ 
Space $(C_{\lambda}(\Ci),\rho)$  is a complete metric space, see \cite[Lemma 2.5]{BCOT1}.

The next lemma describes the representatives of Lebesgue measure-preserving circle maps and its proof can be found in \cite{BCOT1}. By abuse of notation we denote by $\lambda$ the  Lebesgue measure on $\mathbb{R}$ as well.

\begin{lem}\label{l:3} Let $F$ be a representative of $f\colon~\Ci\to\Ci$. The following conditions are equivalent.
\begin{itemize}
\item[(i)] $f\in C_{\lambda}(\Ci)$.
\item[(ii)] $\forall A\subset [0,1)\text{ Borel },~\lambda(A)=\lambda(F^{-1}(A))$.
    \end{itemize}
 \end{lem}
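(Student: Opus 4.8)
The plan is to use the parametrization $\phi$ to simultaneously transport the normalized Lebesgue measure on $\Ci$ to the Lebesgue measure on $[0,1)$ and to conjugate $f$ to its representative $F$. Write $\psi\defeq\phi|_{[0,1)}\colon[0,1)\to\Ci$. This map is a continuous bijection, hence, being continuous and injective between Polish spaces, a Borel isomorphism; and by the very definition of the normalized arc-length measure it satisfies $\psi_*\lambda=\lambda$, i.e.\ $\lambda(\psi(A))=\lambda(A)$ for every Borel $A\subseteq[0,1)$ and $\lambda(\psi^{-1}(B))=\lambda(B)$ for every Borel $B\subseteq\Ci$.

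Next I would record the conjugacy. For $x\in[0,1)$ the defining relation \eqref{e:10} gives $\phi(\tilde f(x))=f(\phi(x))=f(\psi(x))$, while $\phi(\tilde f(x))=\phi(F(x))=\psi(F(x))$ because $\phi$ has period $1$, $F(x)\equiv\tilde f(x)\pmod 1$, and $F(x)\in[0,1)$. Hence $\psi\circ F=f\circ\psi$ on $[0,1)$, and a one-line computation with preimages then yields
\[
F^{-1}\big(\psi^{-1}(B)\big)=\psi^{-1}\big(f^{-1}(B)\big)\qquad\text{for every Borel }B\subseteq\Ci .
\]

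Given these two facts both implications are immediate. For (i)$\Rightarrow$(ii), take a Borel $A\subseteq[0,1)$ and set $B\defeq\psi(A)$, so that $A=\psi^{-1}(B)$; then $\lambda(F^{-1}(A))=\lambda(\psi^{-1}(f^{-1}(B)))=\lambda(f^{-1}(B))=\lambda(B)=\lambda(A)$, using respectively the displayed identity, invariance of $\lambda$ under $\psi^{-1}$, hypothesis (i), and invariance of $\lambda$ under $\psi$. For (ii)$\Rightarrow$(i), take a Borel $B\subseteq\Ci$, set $A\defeq\psi^{-1}(B)$ (Borel in $[0,1)$), note that $f^{-1}(B)=\psi(F^{-1}(A))$ by the displayed identity, and run the same chain backwards: $\lambda(f^{-1}(B))=\lambda(F^{-1}(A))=\lambda(A)=\lambda(B)$.

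I do not expect a genuine obstacle here; the only points requiring care are purely measure-theoretic: that $\psi$ really is a Borel isomorphism carrying $\lambda$ to $\lambda$ (so that Borel sets and null sets are preserved in both directions — in particular $F$ is Borel measurable, being $\tilde f$ composed with the fractional-part map, so $F^{-1}(A)$ is Borel whenever $A$ is, and statement (ii) is meaningful), together with the bookkeeping of the reduction modulo $1$ when passing between the lift $\tilde f$ and the representative $F$.
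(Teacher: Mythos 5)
Your proof is correct. Note, however, that the paper does not include its own proof of this lemma: it states that ``its proof can be found in \cite{BCOT1}'' and leaves it at that. Your argument --- conjugate $f$ with $F$ via the measure-preserving Borel isomorphism $\psi=\phi|_{[0,1)}$ and push the measure-invariance identity back and forth --- is the natural one and is presumably the argument given in the cited reference. The only step you lean on the general Lusin--Souslin theorem for (that a continuous injection between Polish spaces is a Borel isomorphism) can in this concrete case be replaced by the elementary observation that $\psi^{-1}$ is continuous off the single point $\phi(0)\in\Ci$, so both $\psi$ and $\psi^{-1}$ are visibly Borel measurable.
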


A \emph{critical point} $x\in \mathbb{S}^1$ of $f:C(\Ci)\to C(\Ci)$ is a point such that $f|_{J}$ is not one-to-one for every open arc $x\in J\subset \mathbb{S}^1$. 
An arc $J\subset \mathbb{S}^1$ is called an {\em arc of monotonicity} if $f|_{A}$ is monotone, but $f$ is not monotone on any arc properly containing $J$.
We say that a map $f:\mathbb{S}^1\to \mathbb{S}^1$ is \emph{piecewise linear} (or \emph{piecewise linear}) if it has finitely many critical points and is affine on every arc of monotonicity. We will say that $\tilde f$, a lifting of $f$ is \emph{piecewise affine} if its corresponding representative $F$ is piecewise affine. We say that a circle map $f$ is \emph{locally eventually onto (leo)} if for every open arc $J\subset \mathbb{S}^1$ there exists a non-negative integer $n$ so that $f^n(J)=\mathbb{S}^1$. This property is also sometimes referred in the literature as \emph{topological exactness}.

To check whether our perturbations preserve Lebesgue measure we will also implicitly use the following simple observation which we can apply for the representatives.

\begin{lem}\label{l:6}
Let $F$ be a piecewise affine representative of a circle map $f$ with non-zero slopes and such that its derivative does not exist at a finite set $E$. Then the properties (i) and (ii) from Lemma~\ref{l:3} are equivalent to the property
\begin{equation}\label{e:3}
   \forall~y\in [0,1)\setminus F(E)\colon~\sum_{x\in F^{-1}(y)}\frac{1}{\vert F'(x)\vert}=1.
 \end{equation}
 \end{lem}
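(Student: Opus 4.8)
The plan is to prove Lemma~\ref{l:6} by computing the pushforward measure $F_*\lambda := \lambda\circ F^{-1}$ on Borel subsets of $[0,1)$ and showing that equation~\eqref{e:3} is precisely the statement that this pushforward has density identically $1$, hence equals $\lambda$. First I would set up notation: since $F$ is a piecewise affine representative with derivative undefined only on the finite set $E$, the complement $[0,1)\setminus E$ is a finite union of open intervals $J_1,\dots,J_k$ on each of which $F$ is affine with a fixed nonzero slope $s_i = F'|_{J_i}$. Consequently $F(E)$ is a finite set, and for each $y\in[0,1)\setminus F(E)$ the preimage $F^{-1}(y)$ is a finite set containing exactly one point from each $J_i$ whose image interval $F(J_i)$ covers $y$; at such points $|F'(x)| = |s_i|$.

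Next I would invoke the change-of-variables / coarea formula for piecewise affine maps: for any Borel set $A\subset[0,1)$,
\begin{equation*}
\lambda(F^{-1}(A)) = \sum_{i=1}^{k} \lambda(F^{-1}(A)\cap J_i) = \sum_{i=1}^{k}\frac{1}{|s_i|}\,\lambda\bigl(A\cap F(J_i)\bigr) = \int_A \Bigl(\sum_{i:\,y\in F(J_i)}\frac{1}{|s_i|}\Bigr)\,d\lambda(y),
\end{equation*}
where the middle equality uses that an affine map with slope $s_i$ scales one-dimensional Lebesgue measure by the factor $|s_i|$, and the removal of the measure-zero set $E$ (resp. $F(E)$) does not affect any of these integrals. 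Writing $g(y) := \sum_{i:\,y\in F(J_i)} 1/|s_i| = \sum_{x\in F^{-1}(y)} 1/|F'(x)|$ for $y\notin F(E)$, this shows $F_*\lambda$ is absolutely continuous with density $g$. Now the equivalence is immediate in both directions: if $f\in C_\lambda(\Ci)$ then by Lemma~\ref{l:3}(ii) we have $\lambda(F^{-1}(A)) = \lambda(A)$ for every Borel $A$, so $\int_A g\,d\lambda = \int_A 1\,d\lambda$ for all $A$, forcing $g = 1$ $\lambda$-a.e.; since $g$ is constant on each of the finitely many subintervals determined by the points of $F(E)$, "a.e. equal to $1$" upgrades to "$g(y) = 1$ for every $y\in[0,1)\setminus F(E)$", which is \eqref{e:3}. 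Conversely, if \eqref{e:3} holds then $g\equiv 1$ off the finite set $F(E)$, so $\lambda(F^{-1}(A)) = \int_A g\,d\lambda = \lambda(A)$ for every Borel $A$, giving property (ii) of Lemma~\ref{l:3}, which by that lemma is equivalent to (i).

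The only genuine technical point — and thus the main obstacle, though a mild one — is the bookkeeping that lets one pass from "$g=1$ almost everywhere" to "$g=1$ everywhere on $[0,1)\setminus F(E)$": one must observe that the finitely many endpoints of the intervals $F(J_i)$ all lie in the finite set $F(E)$ together with $\{0\}$ (and possibly the wrap-around point), so on each connected component of the complement the function $g$ is genuinely constant, and a constant function equal to $1$ a.e. on an interval is $1$ at every point of that interval. Everything else is the standard fact that affine maps scale Lebesgue measure by the absolute value of their slope, plus countable additivity; I would state these briefly and not belabour the elementary estimates.
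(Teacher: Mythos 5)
Your proof is correct, and the argument is essentially the expected one for this statement. In fact the paper gives no proof of Lemma~\ref{l:6} at all: it is introduced as a ``simple observation'' to be applied to representatives, and is stated without demonstration. So there is no ``paper proof'' to compare against; what you have supplied is a self-contained justification of the lemma.

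Your key step is correct: on each component $J_i$ of $[0,1)\setminus E$ the restriction $F|_{J_i}$ is an affine bijection onto the interval $F(J_i)$ with slope $s_i$, so $\lambda(F^{-1}(A)\cap J_i)=|s_i|^{-1}\lambda(A\cap F(J_i))$, and summing yields $\lambda(F^{-1}(A))=\int_A g\,d\lambda$ with $g(y)=\sum_{x\in F^{-1}(y)}1/|F'(x)|$ for $y\notin F(E)$; the equivalence follows because $g$ is a step function constant on the finitely many intervals complementary to $F(E)$ (together with the wrap-around point), so ``$g=1$ a.e.''~upgrades to ``$g=1$ off $F(E)$''. One small remark that would tighten the exposition: the point $0\sim 1$ of the circle does not actually introduce a new jump of $g$. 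If $0\notin E$, then $f$ is affine on an arc through $0\in\Ci$, the two intervals $J_1$ adjacent to $0$ from the right and $J_k$ adjacent to $1$ from the left carry the same slope, and their images meet continuously at $F(0)$, so $g$ is continuous there; if $0\in E$ then $F(0)\in F(E)$ already. Hence your hedging ``together with $\{0\}$'' is harmless but in fact unnecessary, and the conclusion holds exactly on $[0,1)\setminus F(E)$ as stated.
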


For a metric space $(X,d)$ we shall use $B(x,\xi)$ for the open ball of radius $\xi$ centered at $x\in X$ and for a set $U\subset X$ we shall denote 
$$B(U,\xi):=\bigcup_{x\in U}B(x,\xi).$$ In the rest of the paper we use the letter $d$ to denote the \emph{Euclidean distance} on the underlying Euclidean space.

\section{Proof of Theorem~\ref{thm:UniLimPresLeb}}\label{sec:thm1.1}
For the easier visualization of the concept defined in the following definition we refer the reader to Figure~\ref{fig:sigma}, where the examples of such interval maps are given and Figure~\ref{fig:hatlambda} and \ref{fig:lambda} where such a circle map is given.

First let us give a definition of crookedness for the circle.

\begin{defn}\label{def:KTT}
Let $f:\mathbb{S}^1\to \mathbb{S}^1$ be a circle map with the usual arc-length metric $\rho$. For some $\delta>0$ a map $\omega:[0,1]\to \mathbb{S}^1$ is said to be {\em $(f,\delta)$-crooked} if there exist $s,t\in \mathbb{S}^1$ with $0<s\leq t<1$ such that $\rho(f\circ \omega(s),f\circ \omega(1))\leq \delta$ and $\rho(f\circ \omega(t),f\circ \omega(0))\leq \delta$.
\end{defn}

The following definition is a rephrasing of Definition~\ref{def:KTT} in the circle case in terms of liftings.

\begin{defn}\label{def:crooked}
    Let $f:\mathbb{S}^1\to \mathbb{S}^1$ be a circle map and $\tilde{f}:\mathbb{R}\to \mathbb{R}$ its lifting.
    Let $a,b\in \mathbb{R}$ and let $\delta>0$.
    We say that a map $f$ is {\em $\delta$-crooked between points $a$ and $b$}  if for every two points $c, d \in \mathbb{R}$ with $\tilde f(c) = a$ and $\tilde f(d) = b$, there is a point $c'$ between $c$ and $d$ and a point $d'$ between $c'$ and $d$ such that $|b - \tilde f(c')|\pmod 1 < \delta$ and $|a - \tilde f(d')|\pmod 1 < \delta$. We will say that $f$ is {\em $\delta$-crooked} if it is $\delta$-crooked between every pair of points. Given $\delta>0$ we say that $f$ is {\em $\delta$-crooked} if every map $\omega$ is $(f,\delta)$-crooked.
    
\end{defn}

\begin{rem}
It is clear that the definitions of $\delta$-crooked maps from Definitions~\ref{def:KTT} and \ref{def:crooked} are equivalent for the case of $\mathbb{S}^1$.
\end{rem}

The notion of $\delta$-crookedness goes hand in hand with inverse limits that we introduce now.
 For a collection of continuous maps on compact metric spaces $f_i:Z_{i+1}\to Z_i$ we define
\begin{equation}
\underleftarrow{\lim} (Z_i,f_i)
:=
\{\hat z:=\big(z_{0},z_1,\ldots \big) \in Z_0\times Z_1,\ldots\big|  
z_i\in Z_i, z_i=f_i(z_{i+1}), \forall i \geq 0\}.
\end{equation}
The space $\underleftarrow{\lim} (Z_i,f_i)$ is equipped with the subspace 
metric induced from the 
\emph{product metric} in $Z_0\times Z_1\times\ldots$,
where $f_i$ are called the {\em bonding maps}.
If $Z_i=Z$ and $f_i=f$ for all $i\geq 0$, the inverse limit space 
$$
\hat Z:=\underleftarrow{\lim} (Z,f)
$$
also comes with a natural homeomorphism, 
called the \emph{natural extension} of $f$ (or the 
\emph{shift homeomorphism})
$\hat f:\hat Z
\to \hat Z$, 
defined as follows. 
For any $\hat z= \big(z_{0},z_1,\ldots \big)\in \hat Z$,
\begin{equation}
\hat{f}(\hat z):= \big(f(z_0),f(z_{1}),f(z_2),\ldots \big) =\big(f(z_0),z_{0},z_1,\ldots \big).
\end{equation}
By $\pi_{i}$ we shall denote
the \emph{$i$-th projection} 
from 
$\hat Z$ to its $i$-th coordinate. 

 Note that {\em pseudo-circle} is the unique plane-separating circle-like continuum that is hereditarily indecomposable \cite{Fearnley}.
 Thus, from \cite[Theorem 3.3]{Fearnley2} (see also Theorem 23 in \cite{KOT} and comments afterwards) we get the following.

\begin{prop}\label{prop:pseudo-circle}
    Let $f:\mathbb{S}^1\to \mathbb{S}^1$ be a degree 1 circle map so that for every $\delta>0$ there exists $n\in \N$ such that $f^n$ is $\delta$-crooked. Then, $\underleftarrow{\lim}(\mathbb{S}^1,f)$ is the pseudo-circle.
\end{prop}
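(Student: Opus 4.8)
\textbf{Proof plan for Proposition~\ref{prop:pseudo-circle}.}

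The plan is to verify that $\hat X := \underleftarrow{\lim}(\mathbb{S}^1,f)$ satisfies the three characterizing properties of the pseudo-circle from Fearnley's theorem \cite{Fearnley}: it is a plane-separating circle-like continuum, and it is hereditarily indecomposable. The circle-like property is immediate and standard: since each bonding map is $f\colon\mathbb{S}^1\to\mathbb{S}^1$, the inverse limit admits arbitrarily fine open covers by chains-with-endlinks-joined (circular chains), obtained by pulling back small open circular covers of $\mathbb{S}^1$ through the projections $\pi_i$ for $i$ large; continuity of $f$ guarantees the mesh can be made arbitrarily small. That $\hat X$ is a nondegenerate continuum follows because $f$ is onto (degree~$1$ circle maps here are surjective), so all projections are onto and $\hat X$ is nonempty, compact, and connected, and it is nondegenerate since $f$ is not a homeomorphism conjugate to an irrational rotation (crookedness forces non-injectivity). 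The plane-separating property is also classical for degree~$1$ circle-like continua arising this way: $\hat X$ embeds in the plane (indeed in the annulus) and separates it, because the bonding maps have degree~$1$, so the induced map on first \v Cech cohomology is an isomorphism $\mathbb{Z}\to\mathbb{Z}$ and $\check H^1(\hat X)\cong\mathbb{Z}\neq 0$, which by Fearnley's classification (or the discussion around \cite[Theorem 23]{KOT}) is exactly what distinguishes the plane-separating (pseudo-circle) case from the pseudo-arc case.

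The substantive point is hereditary indecomposability, and here I would invoke \cite[Theorem 3.3]{Fearnley2} as cited: the hypothesis that for every $\delta>0$ there is $n\in\N$ with $f^n$ being $\delta$-crooked is precisely the condition under which the inverse limit with bonding map $f$ is hereditarily indecomposable. Concretely, one uses that $\underleftarrow{\lim}(\mathbb{S}^1,f)$ is homeomorphic to $\underleftarrow{\lim}(\mathbb{S}^1,f^{n_k})$ along any subsequence obtained by grouping bonding maps, so passing to powers does not change the space; thus it suffices to show that a continuum which is an inverse limit of circles with bonding maps that are $\delta_k$-crooked for $\delta_k\to 0$ is hereditarily indecomposable. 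The standard argument: given a subcontinuum $K\subset\hat X$ and a proposed decomposition $K = A\cup B$ into proper subcontinua, one projects to a high coordinate where the relevant diameters are controlled, and the $\delta$-crookedness of the appropriate power of $f$ (which "folds" any arc back near both its endpoints) contradicts the existence of such a splitting — any arc in the image gets wrapped so tightly that $A$ and $B$ cannot both be proper. This is exactly the mechanism encoded in \cite[Theorem 3.3]{Fearnley2}, and I would simply cite it after checking its hypotheses match Definition~\ref{def:crooked}.

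The main obstacle, and where care is genuinely needed, is matching conventions: Fearnley's crookedness condition is stated for maps between arcs (or in terms of $\varepsilon$-chains), whereas here we have circle maps and Definition~\ref{def:crooked} is phrased via liftings with the "$\pmod 1$" caveat. I would therefore spend a short argument verifying that $\delta$-crookedness of $f^n$ in the sense of Definition~\ref{def:crooked}, lifted to $\tilde f^{\,n}\colon\R\to\R$, restricts on each sufficiently short arc to the classical notion of a crooked map between two points of an interval, with the modulus-$1$ ambiguity harmless because $\delta$ is small and the relevant points are close. Once that translation is in place, the cited theorems of Fearnley apply verbatim, and combined with the circle-like and plane-separating properties established above, we conclude $\hat X$ is the pseudo-circle. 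I expect no difficulty beyond this bookkeeping, since the deep content is entirely in \cite{Fearnley,Fearnley2,KOT}.
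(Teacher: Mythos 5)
Your proposal follows the paper's own route exactly: the paper gives no proof beyond invoking Fearnley's characterization of the pseudo-circle as the unique plane-separating circle-like hereditarily indecomposable continuum, together with the crookedness criterion for hereditary indecomposability from \cite[Theorem 3.3]{Fearnley2} and \cite[Theorem 23]{KOT}, and you cite precisely these and flesh out the routine verifications (circle-likeness, degree-one forcing $\check H^1\cong\mathbb{Z}$). The one small misstep is your justification of nondegeneracy — it has nothing to do with $f$ failing to be a rotation; the inverse limit is nondegenerate simply because each coordinate projection is onto $\mathbb{S}^1$ — but this is immaterial, and you could in fact skip the separate planarity/plane-separation discussion entirely since Fearnley's classification in \cite{Fearnley2} already tells you that a hereditarily indecomposable circle-like continuum with $\check H^1\cong\mathbb{Z}$ is the pseudo-circle.
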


The following two definitions and the first part of the third definition were introduced in \cite{LM} and used in a similar fashion in \cite{CO}.
We will use the maps defined below as the building blocks of our perturbations. We will first work with interval maps and then lift them to $\mathbb{R}$ to define circle maps. The first part of the proof of Theorem~\ref{thm:UniLimPresLeb} starts analogously to the proof of Theorem 1.1 in \cite{CO} so we will just go briefly through its most important ingredients and often refer the reader to \cite{CO}.

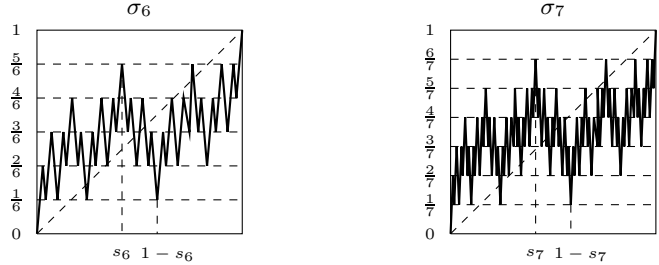
\begin{figure}[!ht]
	\centering
	\begin{tikzpicture}[scale=2.7]
	\draw[dashed] (0,0)--(1,1);
	\node at (0.5,1.1) {\small $\sigma_6$};
	\node at (-0.1,0) {\tiny $0$};
	\node at (-0.1,1/6) {\tiny $\frac{1}{6}$};
	\node at (-0.1,2/6) {\tiny $\frac{2}{6}$};
	\node at (-0.1,3/6) {\tiny $\frac{3}{6}$};
	\node at (-0.1,4/6) {\tiny $\frac{4}{6}$};
	\node at (-0.1,5/6) {\tiny $\frac{5}{6}$};
	\node at (-0.1,1) {\tiny $1$};
	\node at (29/70,-0.1) {\tiny $s_6$};
	\node at (41/70+0.05,-0.1) {\tiny $1-s_6$};
	\draw (0,0)--(0,1)--(1,1)--(1,0)--(0,0);
	\draw[dashed] (0,1/6)--(1,1/6);
	\draw[dashed] (0,2/6)--(1,2/6);
	\draw[dashed] (0,3/6)--(1,3/6);
	\draw[dashed] (0,4/6)--(1,4/6);
	\draw[dashed] (0,5/6)--(1,5/6);
	\draw[thick] (0,0)--(2/70,2/6)--(3/70,1/6)--(5/70,3/6)--(7/70,1/6)--(9/70,3/6)--(10/70,2/6)--(12/70,4/6)--(14/70,2/6)--(15/70,3/6)--(17/70,1/6)--(19/70,3/6)--(20/70,2/6)--(22/70,4/6)--(24/70,2/6)--(26/70,4/6)--(27/70,3/6)--(29/70,5/6)--(31/70,3/6)--(32/70,4/6)--(34/70,2/6)--(36/70,4/6)--(38/70,2/6)--(39/70,3/6)--(41/70,1/6)--(43/70,3/6)--(44/70,2/6)--(46/70,4/6)--(48/70,2/6)--(50/70,4/6)--(52/70,3/6)--(53/70,5/6)--(55/70,3/6)--(56/70,4/6)--(58/70,2/6)--(60/70,4/6)--(61/70,3/6)--(63/70,5/6)--(65/70,3/6)--(67/70,5/6)--(68/70,4/6)--(1,1);
	\draw[dashed] (29/70,5/6)--(29/70,0);
	\draw[dashed] (41/70,1/6)--(41/70,0);
	\end{tikzpicture}
	\hspace{2cm}
	\begin{tikzpicture}[scale=2.7]
	\draw[dashed] (0,0)--(1,1);
	\node at (0.5,1.1) {\small $\sigma_7$};
	\node at (-0.1,0) {\tiny $0$};
	\node at (-0.1,1/7) {\tiny $\frac{1}{7}$};
	\node at (-0.1,2/7) {\tiny $\frac{2}{7}$};
	\node at (-0.1,3/7) {\tiny $\frac{3}{7}$};
	\node at (-0.1,4/7) {\tiny $\frac{4}{7}$};
	\node at (-0.1,5/7) {\tiny $\frac{5}{7}$};
	\node at (-0.1,6/7) {\tiny $\frac{6}{7}$};
	\node at (-0.1,1) {\tiny $1$};
	\node at (70/169,-0.1) {\tiny $s_7$};
	\node at (99/169+0.05,-0.1) {\tiny $1-s_7$};
	\draw (0,0)--(0,1)--(1,1)--(1,0)--(0,0);
	\draw[dashed] (0,1/7)--(1,1/7);
	\draw[dashed] (0,2/7)--(1,2/7);
	\draw[dashed] (0,3/7)--(1,3/7);
	\draw[dashed] (0,4/7)--(1,4/7);
	\draw[dashed] (0,5/7)--(1,5/7);
	\draw[dashed] (0,6/7)--(1,6/7);
	\draw[thick] (0,0)--(2/169,2/7)--(3/169,1/7)--(5/169,3/7)--(7/169,1/7)--(9/169,3/7)--(10/169,2/7)--(12/169,4/7)--(14/169,2/7)--(15/169,3/7)--(17/169,1/7)--(19/169,3/7)--(20/169,2/7)--(22/169,4/7)--(24/169,2/7)--(26/169,4/7)--(27/169,3/7)--(29/169,5/7)--(31/169,3/7)--(32/169,4/7)--(34/169,2/7)--(36/169,4/7)--(38/169,2/7)--(39/169,3/7)--(41/169,1/7)--(43/169,3/7)--(44/169,2/7)--(46/169,4/7)--(48/169,2/7)--(50/169,4/7)--(52/169,3/7)--(53/169,5/7)--(55/169,3/7)--(56/169,4/7)--(58/169,2/7)--(60/169,4/7)--(61/169,3/7)--(63/169,5/7)--(65/169,3/7)--(67/169,5/7)--(68/169,4/7)--(70/169,6/7)--(72/169,4/7)--(73/169,5/7)--(75/169,3/7)--(77/169,5/7)--(79/169,3/7)--(80/169,4/7)--(82/169,2/7)--(84/169,4/7)--(85/169,3/7)--(87/169,5/7)--(89/169,3/7)--(90/169,4/7)--(92/169,2/7)--(94/169,4/7)--(96/169,2/7)--(97/169,3/7)--(99/169,1/7)--(101/169,3/7)--(102/169,2/7)--(104/169,4/7)--(106/169,2/7)--(108/169,4/7)--(109/169,3/7)--(111/169,5/7)--(113/169,3/7)--(114/169,4/7)--(116/169,2/7)--(118/169,4/7)--(121/169,3/7)--(121/169,5/7)--(123/169,3/7)--(125/169,5/7)--(126/169,4/7)--(128/169,6/7)--(130/169,4/7)--(131/169,5/7)--(133/169,3/7)--(135/169,5/7)--(137/169,3/7)--(138/169,4/7)--(140/169,2/7)--(142/169,4/7)--(143/169,3/7)--(145/169,5/7)--(147/169,3/7)--(149/169,5/7)--(151/169,4/7)--(152/169,6/7)--(154/169,4/7)--(155/169,5/7)--(157/169,3/7)--(159/169,5/7)--(160/169,4/7)--(162/169,6/7)--(164/169,4/7)--(166/169,6/7)--(167/169,5/7)--(1,1);
	\draw[dashed] (70/169,5/7)--(70/169,0);
	\draw[dashed] (99/169,1/7)--(99/169,0);
	\end{tikzpicture}
	\caption{Simple $n$-crooked map
	s $\sigma_n$ for $n=6$ and $n=7$. Note that $\sigma_6$ and $\sigma_7$ are $3/6$ and $3/7$-crooked respectively.}\label{fig:sigma}
\end{figure} 

In what follows we define circle maps $\lambda_{n,k}$ that we will work with throughout the section. It will be sufficient for our purposes to define these maps for eventually every odd $n$. Due to the more particular construction we need in comparison to \cite{CO} more specific perturbations and thus we will require $k\geq 1$ to be even.

	Let $(\scr [n])^\infty_{n=1}\subset \N$ be the sequence defined in the following way: 
	$\scr [1]:=1$, $\scr [2]:=2$ and $\scr [n]:=2\scr [n-1]+\scr [n-2]$ for each $n\geq 3$.

In what follows let us denote by 
\begin{equation}\label{eq:eta}
\zeta:=\frac{\scr[n-1]}{2(\scr[n]+\scr[n-1])}.
\end{equation}

For each $n\in \N$, let \textit{simple $n$-crooked map}
$\sigma_n\colon [0,1]\to [0,1]$ be defined inductively as in \cite{LM} or \cite{CO} (see also Figure~\ref{fig:sigma} and Figure~\ref{fig:hatlambda}).\\
Let $\sigma_{-n}$ denote the reflection of the simple $n$-crooked map, that is 
$\sigma_{-n}(t):= 1 -\sigma_n (t)$ for each $t\in I$. Let $\sigma_{n}^{L}:=\sigma_{n}|_{[0,1/2]}$ where $\sigma_{n}|_{[0,1/2]}:[0,1/2]\to [0,\frac{n-1}{n}]$,  $\sigma_{n}^{R}:=\sigma_{n}|_{[1/2,1]}$ where $\sigma_{n}|_{[1/2,1]}:[1/2,1]\to [\frac{1}{n},1]$.  Similarly as above let $\sigma_{-n}^L$ and $\sigma_{-n}^R$ denote the reflections of $\sigma_{n}^L$ and $\sigma_{n}^R$ respectively. 

\begin{rem}\label{rem:crooked}
	By Proposition~3.5 in \cite{LM}, if $\eps>0$ and $n$ is sufficiently large to ensure $2/n<\eps$, the map $\sigma_n$ is $\eps$-crooked.
\end{rem}

For each odd integer $n\geq 7$ and each integer $k\geq 1$ define the map
$$
\hat{\lambda}_{n,k} \colon [0,n+k-1]\to \Big[0,\frac{2n+k-2}{n}\Big]
$$
by the formula
\begin{equation}
\label{eq:lambdahat}
\hat{\lambda}_{n,k}(t):=\begin{cases}
\frac{n-1}{n}\sigma^R_{-(n-1)}(\frac{t-i}{2\zeta}+\frac{1}{2})+\frac{i}{n},& \text{if } t\in[i,i+\zeta],\\
\sigma_n((t-i-\zeta)(\frac{1}{1-2\zeta}))+\frac{i}{n},& \text{if } t\in[i+\zeta,i+1-\zeta],\\
\frac{n-1}{n}\sigma_{-(n-1)}^L(\frac{t-i-1}{2\zeta}+\frac{1}{2})+\frac{i+1}{n},& \text{if } t\in[i+1-\zeta,i+1].\\
\end{cases}
\end{equation}
 for some $i=\{0,1,\ldots, n+k-2\}$. See Figure~\ref{fig:hatlambda} for the graph of $\hat{\lambda}_{n,k}|_{[i,i+1]}$ when $n=7$.
 
 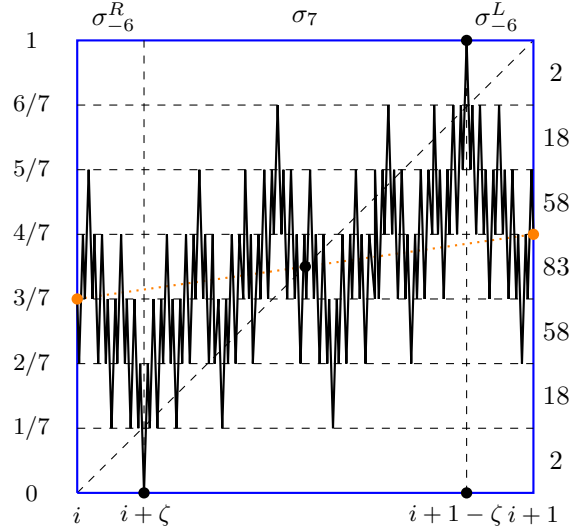
\begin{figure}[!ht]
 	\centering
 	\begin{tikzpicture}[scale=6]
 \draw[dashed] (0,0)--(1,1);
  \draw[dotted,orange,thick] (0,3/7)--(1,4/7);
 \node at (-0.1,0) {\small $0$};
 \node at (-0.1,1/7) {\small$1/7$};
 \node at (-0.1,2/7) {\small $2/7$};
 \node at (-0.1,3/7) {\small $3/7$};
 \node at (-0.1,4/7) {\small $4/7$};
 \node at (-0.1,5/7) {\small $5/7$};
 \node at (-0.1,6/7) {\small $6/7$};
 \node at (-0.1,1) {\small $1$};
 \draw[thick,blue] (0,0)--(0,1)--(1,1)--(1,0)--(0,0);
 \draw[dashed] (0,1/7)--(1,1/7);
 \draw[dashed] (0,2/7)--(1,2/7);
 \draw[dashed] (0,3/7)--(1,3/7);
 \draw[dashed] (0,4/7)--(1,4/7);
 \draw[dashed] (0,5/7)--(1,5/7);
 \draw[dashed] (0,6/7)--(1,6/7);
 \draw[thick](0,3/7)--(1/239,2/7)--(3/239,4/7)--(4/239,3/7)--(6/239,5/7)--(8/239,3/7)--(9/239,4/7)--(11/239,2/7)--(13/239,4/7)--(15/239,2/7)--(16/239,3/7)--(18/239,1/7)--(20/239,3/7)--(21/239,2/7)--(23/239,4/7)--(25/239,2/7)--(26/239,3/7)--(28/239,1/7)--(30/239,3/7)--(32/239,1/7)--(33/239,2/7)--(35/239,0)--(37/239,2/7)--(38/239,1/7)--(40/239,3/7)--(42/239,1/7)--(44/239,3/7)--(45/239,2/7)--(47/239,4/7)--(49/239,2/7)--(50/239,3/7)--(52/239,1/7)--(54/239,3/7)--(55/239,2/7)--(57/239,4/7)--(59/239,2/7)--(61/239,4/7)--(62/239,3/7)--(64/239,5/7)--(66/239,3/7)--(67/239,4/7)--(69/239,2/7)--(71/239,4/7)--(72/239,2/7)--(74/239,3/7)--(76/239,1/7)--(78/239,3/7)--(79/239,2/7)--(81/239,4/7)--(83/239,2/7)--(85/239,4/7)--(87/239,3/7)--(88/239,5/7)--(90/239,3/7)--(91/239,4/7)--(92/239,2/7)--(95/239,4/7)--(96/239,3/7)--(98/239,5/7)--(100/239,3/7)--(102/239,5/7)--(103/239,4/7)--(105/239,6/7)--(107/239,4/7)--(108/239,5/7)--(110/239,3/7)--(112/239,5/7)--(114/239,3/7)--(115/239,4/7)--(117/239,2/7)--(119/239,4/7)--(120/239,3/7)--(122/239,5/7)--(124/239,3/7)--(125/239,4/7)--(127/239,2/7)--(129/239,4/7)--(131/239,2/7)--(132/239,3/7)--(134/239,1/7)--(136/239,3/7)--(137/239,2/7)--(139/239,4/7)--(141/239,2/7)--(143/239,4/7)--(144/239,3/7)--(146/239,5/7)--(148/239,3/7)--(149/239,4/7)--(151/239,2/7)--(153/239,4/7)--(156/239,3/7)--(156/239,5/7)--(158/239,3/7)--(160/239,5/7)--(161/239,4/7)--(163/239,6/7)--(165/239,4/7)--(166/239,5/7)--(168/239,3/7)--(170/239,5/7)--(172/239,3/7)--(173/239,4/7)--(175/239,2/7)--(177/239,4/7)--(178/239,3/7)--(180/239,5/7)--(182/239,3/7)--(184/239,5/7)--(186/239,4/7)--(187/239,6/7)--(189/239,4/7)--(190/239,5/7)--(192/239,3/7)--(194/239,5/7)--(195/239,4/7)--(197/239,6/7)--(199/239,4/7)--(201/239,6/7)--(202/239,5/7)--(204/239,1)--(206/239,5/7)--(207/239,6/7)--(209/239,4/7)--(211/239,6/7)--(213/239,4/7)--(214/239,5/7)--(216/239,3/7)--(218/239,5/7)--(219/239,4/7)--(221/239,6/7)--(223/239,4/7)--(224/239,5/7)--(226/239,3/7)--(228/239,5/7)--(230/239,3/7)--(231/239,4/7)--(233/239,2/7)--(235/239,4/7)--(236/239,3/7)--(238/239,5/7)--(1,4/7);
 \draw[dashed] (35/239,0)--(35/239,1);
 \draw[dashed] (204/239,1)--(204/239,0);
 \node at (1/2,1.05) {$\sigma_7$};
 \node at (0.08,1.05) {$\sigma^R_{-6}$};
 \node at (0.92,1.05) {$\sigma^L_{-6}$};
  \node at (0,-0.05) {\small $i$};
 \node at (0.15,-0.05) {\small $i+\zeta$};
 \node at (0.83,-0.05) {\small $i+1-\zeta$};
  \node at (1,-0.05) {\small $i+1$};
 \node[circle,fill,orange, inner sep=1.5] at (0,3/7){};
 \node[circle,fill,orange, inner sep=1.5] at (1,4/7){};
 \node[circle,fill, inner sep=1.5] at (35/239,0){};
 \node[circle,fill, inner sep=1.5] at (204/239,1){};
  \node[circle,fill, inner sep=1.5] at (204/239,0){};
 \node[circle,fill, inner sep=1.5] at (0.5,0.5){};
  \node at (1.05,1/14) {$2$};
   \node at (1.05,3/14) {$18$};
    \node at (1.05,5/14) {$58$};
     \node at (1.05,7/14) {$83$};
      \node at (1.05,9/14) {$58$};
       \node at (1.05,11/14) {$18$};
        \node at (1.05,13/14) {$2$};
 	\end{tikzpicture}
 	\caption{This figure shows the building blocks of the function $\hat{\lambda}_{n,k}$ for $n=7$. The numbers on the right side of the graph represent the number of intervals of monotonicity of diameter $1/n$ in the respective horizontal strip. Counting the numbers of such intervals we will argue that maps $\lambda_{n,k}$ preserve Lebesgue measure. The dotted line represents the diagonal for the perturbation map $\lambda_{n,k}$.}\label{fig:hatlambda}
 \end{figure} 
 
 \begin{obs}\label{obs:lambdahat}
 For each odd integer $n\geq 7$ and each $k\in \mathbb{N}$ the map $\hat{\lambda}_{n,k}$ is 
 \begin{enumerate}
  \item a continuous and piecewise linear function with the uniform slope $\frac{\scr[n]+\scr[n-1]}{n}$.
  \item \label{obs:lambdahat2} an odd function around the point $\big(\frac{n+k-1}{2},\frac{2n+k-2}{2n}\big)$.
  \end{enumerate}
 \end{obs}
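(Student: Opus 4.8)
The plan is to extract both assertions directly from the defining formula~\eqref{eq:lambdahat}, using only three elementary facts about the simple $m$-crooked maps $\sigma_m$ furnished by the inductive construction in \cite{LM}: that $\sigma_m$ is piecewise linear with constant slope magnitude $\scr[m]/m$; that $\sigma_m(0)=0$ and $\sigma_m(1)=1$; and that $\sigma_m$ is point-symmetric about $(\tfrac12,\tfrac12)$, i.e.\ $\sigma_m(1-x)=1-\sigma_m(x)$ for $x\in I$, so that $\sigma^R_{-m}(t)=1-\sigma_m(t)$ and $\sigma^L_{-m}(t)=1-\sigma_m(t)$ on their respective domains and inherit the same symmetry. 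Given this, piecewise linearity of $\hat\lambda_{n,k}$ is immediate, because on each of the three subintervals of an interval $[i,i+1]$ the formula~\eqref{eq:lambdahat} is an affinely rescaled (in domain and range) reflected copy of $\sigma_n$ or $\sigma_{n-1}$ plus an additive constant.

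For the uniform slope in (1) I would treat the three pieces in turn. On the middle interval $[i+\zeta,i+1-\zeta]$ the inner substitution has slope $1/(1-2\zeta)$, and \eqref{eq:eta} gives $1-2\zeta=\scr[n]/(\scr[n]+\scr[n-1])$, so the slope equals $\tfrac{\scr[n]}{n}\cdot\tfrac{\scr[n]+\scr[n-1]}{\scr[n]}=\tfrac{\scr[n]+\scr[n-1]}{n}$ in absolute value. On each outer interval the map is $\tfrac{n-1}{n}$ times a half of a reflection of $\sigma_{n-1}$, precomposed with an affine map of slope $1/(2\zeta)=(\scr[n]+\scr[n-1])/\scr[n-1]$, which again yields $\tfrac{n-1}{n}\cdot\tfrac{\scr[n-1]}{n-1}\cdot\tfrac{\scr[n]+\scr[n-1]}{\scr[n-1]}=\tfrac{\scr[n]+\scr[n-1]}{n}$. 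For continuity I would check the three types of break points: at $t=i+\zeta$ and $t=i+1-\zeta$ the agreement of the one-sided values reduces to $\sigma^R_{-(n-1)}(1)=1-\sigma_{n-1}(1)=0=\sigma_n(0)$ together with $\sigma_n(1)=1$ and $\sigma^L_{-(n-1)}(0)=1-\sigma_{n-1}(0)=1$; at an integer point $t=i$ the two adjacent outer expressions both evaluate to $\tfrac{n-1}{n}\bigl(1-\sigma_{n-1}(\tfrac12)\bigr)+\tfrac in$, because $\sigma^L_{n-1}$ and $\sigma^R_{n-1}$ coincide at $\tfrac12$. This establishes (1).

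For (2) the idea is to factor out the staircase and work with the $i$-independent ``profile'' $g(u):=\hat\lambda_{n,k}(i+u)-\tfrac in$, $u\in[0,1]$, so that $\hat\lambda_{n,k}$ on $[i,i+1]$ is $g$ shifted up by $\tfrac in$. Since the reflection $t\mapsto (n+k-1)-t$ carries $[i,i+1]$ onto $[\,n+k-2-i,\ n+k-1-i\,]$ and sends $i+u$ to $(n+k-2-i)+(1-u)$, the desired symmetry about $\bigl(\tfrac{n+k-1}{2},\tfrac{2n+k-2}{2n}\bigr)$ reduces to the single identity $g(u)+g(1-u)=1$ for all $u\in[0,1]$ (which then gives $\hat\lambda_{n,k}(t)+\hat\lambda_{n,k}((n+k-1)-t)=1+\tfrac{n+k-2}{n}=\tfrac{2n+k-2}{n}$, i.e.\ twice the claimed centre height). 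This identity I would verify piece by piece using only the $(\tfrac12,\tfrac12)$-symmetry of $\sigma_n$ and $\sigma_{n-1}$: on the middle piece it is just $\sigma_n(w)+\sigma_n(1-w)=1$, and on the two outer pieces, after the substitution $v=\tfrac12+\tfrac{u}{2\zeta}$, it becomes $\tfrac{n-1}{n}\bigl(1-\sigma_{n-1}(v)\bigr)+\bigl(\tfrac{n-1}{n}\sigma_{n-1}(v)+\tfrac1n\bigr)=\tfrac{n-1}{n}+\tfrac1n=1$. I do not expect a genuine obstacle here, since the whole statement is a finite verification; the only points demanding care are to keep the two rescaling constants $\zeta$ and $\tfrac{n-1}{n}$ consistent across the three pieces so that every slope collapses to the common value $\tfrac{\scr[n]+\scr[n-1]}{n}$, and to state precisely which elementary properties of $\sigma_m$ from \cite{LM} are being invoked.
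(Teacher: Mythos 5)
Your proof is correct. The paper states this as an Observation without a written-out argument, and a direct verification from \eqref{eq:lambdahat} is exactly what is intended; your calculation supplies it cleanly, relying on precisely the right three facts about the Lewis--Minc maps (constant slope magnitude $\scr[m]/m$, endpoint values $\sigma_m(0)=0$ and $\sigma_m(1)=1$, and point symmetry about $(\tfrac12,\tfrac12)$), matching the continuity conditions at all three breakpoint types, and reducing assertion (2) to the $i$-independent identity $g(u)+g(1-u)=1$, which is the cleanest way to organize the symmetry check across the staircase.
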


\begin{lem}\label{lem:hatcrooked}
	For every odd integer $n\geq 7$ and every integer $k\geq 1$ it holds that if $t,s\in [0,n+k-1]$ are such that $|\hat{\lambda}_{n,k}(t)-\hat{\lambda}_{n,k}(s)|<\frac{n-1}{n}$ then $\hat{\lambda}_{n,k}$ is $\frac{3}{n}$-crooked between $\hat{\lambda}_{n,k}(t)$ and $\hat{\lambda}_{n,k}(s)$. 
\end{lem}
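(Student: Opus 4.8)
The plan is to reduce the statement about the composite map $\hat\lambda_{n,k}$ to a statement about a single building block, namely the simple $n$-crooked map $\sigma_n$, whose crookedness is controlled by Remark~\ref{rem:crooked}. First I would unpack the hypothesis: if $t,s\in[0,n+k-1]$ satisfy $|\hat\lambda_{n,k}(t)-\hat\lambda_{n,k}(s)|<\frac{n-1}{n}$, then, writing $\hat\lambda_{n,k}(t)=\frac{i}{n}+r$ and $\hat\lambda_{n,k}(s)=\frac{j}{n}+r'$ with remainders in $[0,\tfrac1n)$ (up to the obvious boundary adjustments), the indices $i,j$ differ by at most $1$. This is the crucial geometric observation: since each block $\hat\lambda_{n,k}|_{[i,i+1]}$ has image $[\tfrac{i}{n},\tfrac{i+1}{n}]$ up to the overlapping "cap" pieces $\sigma^R_{-(n-1)}$ and $\sigma^L_{-(n-1)}$ which only add one extra $\tfrac1n$-strip on each side, two values within $\frac{n-1}{n}$ of each other are realized in (at most) two adjacent horizontal strips of width $\tfrac1n$, and hence in at most two consecutive blocks plus the thin overlap regions.

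Next I would fix arbitrary preimages $c,d$ with $\hat\lambda_{n,k}(c)=\hat\lambda_{n,k}(t)=:a$ and $\hat\lambda_{n,k}(d)=\hat\lambda_{n,k}(s)=:b$, and locate them inside the relevant blocks. The heart of the argument is then to produce the two intermediate points $c'$ between $c$ and $d$ and $d'$ between $c'$ and $d$ with $|b-\hat\lambda_{n,k}(c')|<\tfrac3n$ and $|a-\hat\lambda_{n,k}(d')|<\tfrac3n$ (mod $1$). The point is that on the arc of the domain between $c$ and $d$, the map $\hat\lambda_{n,k}$ runs through at least one full copy of (a rescaled) $\sigma_n$ (or, near block boundaries, of the reflected pieces $\sigma^R_{-(n-1)}$ glued to $\sigma^L_{-(n-1)}$, which together contain a full oscillation of amplitude $\tfrac{n-1}{n}$). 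Since $\sigma_n$, suitably rescaled to have range of height $\tfrac{n-1}{n}$ within one block, is $\eps$-crooked for $\eps$ on the order of $\tfrac2n$ by Remark~\ref{rem:crooked} and Proposition~3.5 of \cite{LM}, the rescaled block map is $\tfrac{n-1}{n}\cdot\tfrac2{n-1}=\tfrac2n$-crooked between its own endpoints; allowing the extra $\tfrac1n$ slack coming from the fact that $a$ and $b$ may sit in adjacent strips and from the affine reparametrization of $[i+\zeta,i+1-\zeta]$ onto $[0,1]$, we get the bound $\tfrac3n$. I would carry this out by cases according to whether $c$ and $d$ lie in the same block, in adjacent blocks, or straddle a "cap" overlap; the odd-symmetry in Observation~\ref{obs:lambdahat}(\ref{obs:lambdahat2}) and the uniform slope in Observation~\ref{obs:lambdahat}(1) let me reduce the adjacent-block and straddling cases to the single-block case by symmetry.

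The main obstacle I anticipate is the bookkeeping in the boundary/overlap cases: near $t\in[i+1-\zeta,i+1]$ the map is governed by $\sigma^L_{-(n-1)}$ on a domain of length $\zeta$ and then continues with $\sigma^R_{-(n-1)}$ on $[i+1,i+1+\zeta]$, and one must check that the concatenation still sweeps through a full crooked oscillation of amplitude at least $\tfrac{n-1}{n}$ so that the $\tfrac3n$ estimate survives — in particular that the reparametrization factor $\tfrac1{2\zeta}=\tfrac{\scr[n]+\scr[n-1]}{\scr[n-1]}$ does not destroy crookedness (it cannot, since crookedness is a purely topological/order property invariant under monotone reparametrization of the domain). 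Once the "full oscillation present between $c$ and $d$" claim is established in each case, choosing $c'$ and $d'$ is exactly the content of $\eps$-crookedness of the $\sigma$-pieces applied inside that oscillation, with $\eps=\tfrac2n$, and the loss of at most $\tfrac1n$ from matching $a,b$ in possibly different strips yields the stated constant $\tfrac3n$. I would conclude by noting that since $c,d$ were arbitrary preimages, this establishes $\tfrac3n$-crookedness of $\hat\lambda_{n,k}$ between $\hat\lambda_{n,k}(t)$ and $\hat\lambda_{n,k}(s)$ in the sense of Definition~\ref{def:crooked}.
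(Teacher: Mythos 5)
The paper states Lemma~\ref{lem:hatcrooked} without a written proof (it is one of the ingredients carried over, with modifications, from the interval construction in \cite{CO} and the simple $n$-crooked maps of \cite{LM}), so there is no argument in the text to compare yours against. Your high-level route --- reduce crookedness of $\hat\lambda_{n,k}$ to crookedness of the building blocks $\sigma_n$ and $\sigma_{-(n-1)}$, using Remark~\ref{rem:crooked} and Observation~\ref{obs:lambdahat}, and observe that the affine reparametrization of the domain is harmless because crookedness only depends on the order and values --- is the right idea and is morally what a correct proof does.

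However, the geometric claim your reduction rests on is false, and it is load-bearing. You assert that ``each block $\hat\lambda_{n,k}|_{[i,i+1]}$ has image $[\frac{i}{n},\frac{i+1}{n}]$ up to the overlapping cap pieces, which only add one extra $\frac{1}{n}$-strip on each side,'' and deduce that two values within $\frac{n-1}{n}$ lie in at most two adjacent horizontal strips, hence that the arbitrary preimages $c,d$ lie in at most two consecutive blocks. But from \eqref{eq:lambdahat} the middle piece of block $i$ is $\sigma_n(\cdot)+\frac{i}{n}$, whose \emph{image is} $[\frac{i}{n},\frac{i}{n}+1]$: each block covers a vertical range of length one, not $\frac{1}{n}$, and consecutive block-images overlap by $\frac{n-1}{n}$. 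This is also visible in Figure~\ref{fig:hatlambda}, where a single block for $n=7$ sweeps all of $[0,1]$ and the marginal counts $2,18,58,83,\dots$ label the numbers of level crossings in each $\frac{1}{7}$-strip. A given value is therefore attained in up to $n$ consecutive blocks, and two values $a,b$ with $|a-b|<\frac{n-1}{n}$ can have preimages $c,d$ as many as $n-1$ blocks apart (for $n=7$, $a=0.01$ is attained only in block $0$ while $b=0.85$ is attained in blocks $0$ through $5$). Your case split into ``same block, adjacent blocks, straddling a cap'' thus omits the generic situation. The correct use of the hypothesis $|a-b|<\frac{n-1}{n}$ is different: if $c,d$ are not already inside a single rescaled $\sigma$-piece, then between them there is at least one full middle piece $[l+\zeta,\,l+1-\zeta]$ with image $[\frac{l}{n},\frac{l}{n}+1]$, and --- since block-images have length one and shift by only $\frac{1}{n}$ --- the hypothesis guarantees that $[\min(a,b),\max(a,b)]$ sits inside some such unit interval with $\frac{1}{n}$ of slack on each side; applying the $\frac{3}{n}$-crookedness of that $\sigma_n$ between its endpoint values $\frac{l}{n}$ and $\frac{l}{n}+1$ then produces the required $c'$ and $d'$. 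That quantitative observation (block image length one, shift $\frac{1}{n}$) is exactly the step missing from your proposal.
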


In what follows, $[x]$ will denote the integer part of $x\in \R$, defined as the $\lfloor x \rfloor$ if $x$ is non-negative, and $\lceil x \rceil$ otherwise.

 \begin{defn}\label{def:lambda}
 	For every odd integer $n\geq 7$ and every $k\in\mathbb{N}$ define the map
 	$\lambda_{n,k} \colon \Ci\to \Ci$ through the lifting $\tilde\lambda_{n,k}:\mathbb{R}\to \mathbb{R}$ by 
 	\begin{equation}
 	\label{eq:lambda}
 	\tilde \lambda_{n,k}(t):=[t] +\frac{n}{n+k-1}\hat{\lambda}_{n,k}((t\hspace{-0.3cm}\pmod 1)(n+k-1))-\frac{n-1}{2(n+k-1)} \text{ for }  t\in 
    \mathbb{R}.
 	\end{equation}
 	Furthermore, we define $\lambda_{n,k,\alpha}:\mathbb{S}^1\to \mathbb{S}^1$ by $\lambda_{n,k,\alpha}:= r_{-\alpha}\circ \lambda_{n,k} \circ r_{\alpha}$ for any $\alpha\in \R$. 
 \end{defn}

See Figure~\ref{fig:lambda} for schematic picture of $\lambda_{n,k}$ for $n=7$ and note that the properly scaled building blocks of $\lambda_{n,k}$ are as on Figure~\ref{fig:hatlambda}.

\begin{obs}\label{obs:lambda1}
For each odd integer $n\geq 7$ and each integer $k\geq 1$ the map $\tilde \lambda_{n,k}$ is 
\begin{enumerate}
	\item \label{obs:lambda1(1)} a continuous piecewise linear function with the uniform slope being $\pm(\scr[n]+\scr[n-1])$; this claim holds equally well for $\tilde\lambda_{n,k,\alpha}$ for every $\alpha$,
	\item \label{obs:lambda1(2)} an odd function around the point $(1/2+i,1/2+i)$ for all $i\in \Z$,
	\item \label{obs:lambda1(3)} such that $\tilde \lambda_{n,k}\big(\frac{j}{n+k-1} + i\big)=\frac{j}{n+k-1}+i$ for all $j\in \{0,\ldots, n+k-1\}$ and all $i\in \Z$.
\end{enumerate}
\end{obs}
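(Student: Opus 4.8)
The plan is to verify each of the three claimed properties of $\tilde\lambda_{n,k}$ directly from the defining formula \eqref{eq:lambda}, leaning on the corresponding facts about $\hat\lambda_{n,k}$ recorded in Observation~\ref{obs:lambdahat}. Write $N := n+k-1$ and $g(t) := \tilde\lambda_{n,k}(t)$ for brevity in the argument below, so that $g(t) = [t] + \tfrac{n}{N}\hat\lambda_{n,k}\big((t \bmod 1)N\big) - \tfrac{n-1}{2N}$.

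For \eqref{obs:lambda1(1)}, note that $t\mapsto (t\bmod 1)N$ is affine with slope $N$ on each interval $[i,i+1)$, $i\in\Z$, and $\hat\lambda_{n,k}$ is piecewise linear with uniform slope $\tfrac{\scr[n]+\scr[n-1]}{n}$ by Observation~\ref{obs:lambdahat}(1); composing and multiplying by $\tfrac{n}{N}$ gives slope $\pm(\scr[n]+\scr[n-1])$ on each linear piece, and the additive terms $[t]$ and $-\tfrac{n-1}{2N}$ are (locally) constant, so they do not affect the slope. One must check continuity at the integers $t=i$: as $t\uparrow i$ the argument $(t\bmod 1)N \uparrow N$, and $\hat\lambda_{n,k}(N) = \hat\lambda_{n,k}(n+k-1) = \tfrac{2n+k-2}{n}$ (the top of the range), while $[t]\to i-1$; as $t\downarrow i$ the argument tends to $0$, $\hat\lambda_{n,k}(0)=0$, and $[t]=i$. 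So the left limit is $(i-1) + \tfrac{n}{N}\cdot\tfrac{2n+k-2}{n} - \tfrac{n-1}{2N} = (i-1) + \tfrac{2n+k-2}{N} - \tfrac{n-1}{2N}$ and since $2n+k-2 = N + (n-1)$ this equals $(i-1) + 1 + \tfrac{n-1}{2N} = i + \tfrac{n-1}{2N}$, which matches the right limit $i + 0 - \tfrac{n-1}{2N}$... so in fact one sees $g$ has a jump of exactly $1$ cancelled by the $[t]$-increment — i.e. $\tilde\lambda_{n,k}$ is continuous as a self-map of $\R$ covering a circle map, which is the intended reading; the same computations apply verbatim to $\tilde\lambda_{n,k,\alpha} = \tilde r_{-\alpha}\circ\tilde\lambda_{n,k}\circ\tilde r_{\alpha}$ since pre- and post-composing with a translation changes neither slopes nor continuity. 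For \eqref{obs:lambda1(2)}, it suffices by periodicity to treat $i=0$ and show $g(1-t) + g(t) = 1$; using that the fractional part satisfies $((1-t)\bmod 1)N = N - (t\bmod 1)N$ for $t\in(0,1)$ and that $\hat\lambda_{n,k}$ is odd about $\big(\tfrac{N}{2},\tfrac{2n+k-2}{2n}\big)$ by Observation~\ref{obs:lambdahat}(\ref{obs:lambdahat2}), one gets $\hat\lambda_{n,k}\big(N-(t\bmod 1)N\big) = \tfrac{2n+k-2}{n} - \hat\lambda_{n,k}\big((t\bmod 1)N\big)$; plugging in and using $[1-t] + [t] = 1$ for non-integer $t$ together with the identity $\tfrac{2n+k-2}{N} - \tfrac{n-1}{N} = 1$ established above yields $g(1-t)+g(t)=1$, and continuity extends this to $t=0,1$.

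For \eqref{obs:lambda1(3)}, again reduce to $i=0$ and evaluate at $t = j/N$ with $j\in\{0,\dots,N\}$: then $(t\bmod 1)N = j$ for $j<N$ (and the boundary case $j=N$, i.e. $t=1$, is handled by continuity as in \eqref{obs:lambda1(1)}), and $\hat\lambda_{n,k}(j)$ is exactly the value of the building-block graph at the integer abscissa $j$. From the defining formula \eqref{eq:lambdahat}, at an integer $t = j$ one is in the first branch with $i=j$, giving $\hat\lambda_{n,k}(j) = \tfrac{n-1}{n}\sigma^R_{-(n-1)}(\tfrac12) + \tfrac{j}{n}$; since $\sigma^R_{-(n-1)}(\tfrac12) = 1 - \sigma^R_{n-1}(\tfrac12)$ and $\sigma^R_{n-1}(\tfrac12) = \sigma_{n-1}(\tfrac12)$ equals (for the simple $n$-crooked maps) the value $\tfrac{n-2}{2(n-1)}$ at the center — one reads off from the construction of $\sigma_{n-1}$ that its midpoint value is $\tfrac12$... more carefully, $\sigma_{n-1}$ is odd about $(\tfrac12,\tfrac12)$ so $\sigma_{n-1}(\tfrac12)=\tfrac12$, hence $\sigma^R_{-(n-1)}(\tfrac12) = \tfrac12$, and $\hat\lambda_{n,k}(j) = \tfrac{n-1}{2n} + \tfrac{j}{n} = \tfrac{2j + n - 1}{2n}$. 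Substituting into \eqref{eq:lambda}: $g(j/N) = 0 + \tfrac{n}{N}\cdot\tfrac{2j+n-1}{2n} - \tfrac{n-1}{2N} = \tfrac{2j+n-1}{2N} - \tfrac{n-1}{2N} = \tfrac{j}{N}$, as claimed.

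The routine bookkeeping is the main (only real) obstacle: keeping track of the three normalizing constants $\tfrac{n}{N}$, $\tfrac{n-1}{2N}$, and the $[t]$-increments simultaneously, and in particular confirming once and for all the arithmetic identity $2n+k-2 = (n+k-1) + (n-1)$ that makes the "degree one, slope-$N$-in-the-fractional-variable" normalization land exactly on fixed points at the grid $\tfrac{j}{N}$. Once Observation~\ref{obs:lambdahat} is granted, no genuinely new idea is needed; each item is a one- or two-line substitution, with the only subtlety being the correct handling of the endpoints/integer points, which is why I have spelled those cases out above rather than the generic computations. The statement for $\tilde\lambda_{n,k,\alpha}$ in item \eqref{obs:lambda1(1)} is immediate since conjugation by a rigid translation on $\R$ preserves piecewise-linearity and all slopes.
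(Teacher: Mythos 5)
The paper records this as an unproved Observation, so there is no printed argument to compare against; you have supplied one from scratch, and the overall plan (reduce everything to the explicit formula~\eqref{eq:lambda}, use Observation~\ref{obs:lambdahat}, and do the bookkeeping) is the right one. However, the execution has a genuine arithmetic slip in item~\eqref{obs:lambda1(1)} that your own item~\eqref{obs:lambda1(3)} computation implicitly contradicts.

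In the continuity check you take $\hat\lambda_{n,k}(0)=0$ and $\hat\lambda_{n,k}(N)=\tfrac{2n+k-2}{n}$, i.e.\ the infimum and supremum of the codomain. These are \emph{not} the endpoint values: at the integer abscissae the first and third branches of~\eqref{eq:lambdahat} are evaluated at $\sigma^R_{-(n-1)}(\tfrac12)=\sigma^L_{-(n-1)}(\tfrac12)=\tfrac12$, so $\hat\lambda_{n,k}(i)=\tfrac{n-1}{2n}+\tfrac{i}{n}$; in particular $\hat\lambda_{n,k}(0)=\tfrac{n-1}{2n}$ and $\hat\lambda_{n,k}(N)=\tfrac{n-1}{2n}+\tfrac{N}{n}$ (this is exactly what you compute in item~\eqref{obs:lambda1(3)}, and it matches the orange dots in Figure~\ref{fig:hatlambda}). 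With your wrong endpoint values the left and right limits at $t=i$ come out as $i+\tfrac{n-1}{2N}$ and $i-\tfrac{n-1}{2N}$ --- a gap of $\tfrac{n-1}{N}$, not the integer jump of $1$ you then invoke to ``explain'' the mismatch; $\tilde\lambda_{n,k}$ would be genuinely discontinuous if those were the correct values. With the correct endpoint values the computation closes exactly: as $t\uparrow i$, $\tfrac{n}{N}\hat\lambda_{n,k}(N)-\tfrac{n-1}{2N}=\tfrac{n-1}{2N}+1-\tfrac{n-1}{2N}=1$, giving left limit $(i-1)+1=i$; as $t\downarrow i$, $\tfrac{n}{N}\hat\lambda_{n,k}(0)-\tfrac{n-1}{2N}=0$, giving right limit $i$. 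No hand-waving about a ``jump cancelled by the $[t]$-increment'' is needed or correct; $\tilde\lambda_{n,k}$ is literally continuous and one gets $\tilde\lambda_{n,k}(i)=i$ for free.

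Two smaller remarks. In item~\eqref{obs:lambda1(2)} the line ``using $[1-t]+[t]=1$ for non-integer $t$'' is false (for $t\in(0,1)$ both integer parts vanish and the sum is $0$); the intended identity that does the work is simply $\tfrac{n}{N}\cdot\tfrac{2n+k-2}{n}-2\cdot\tfrac{n-1}{2N}=\tfrac{n+k-1}{N}=1$, with no floor contribution. The conclusion $g(1-t)+g(t)=1$ is correct, so this is a compensating typo rather than a fatal flaw, but as written it would give $2$. In item~\eqref{obs:lambda1(3)} the parenthetical guess $\sigma_{n-1}(\tfrac12)=\tfrac{n-2}{2(n-1)}$ is incorrect (you correct yourself immediately to $\tfrac12$ via oddness, which is right); cleaner to just cite the oddness of $\sigma_{n-1}$ about $(\tfrac12,\tfrac12)$ directly. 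Once these arithmetic wobbles are fixed, the proof is complete and is what the authors surely had in mind.
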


Now we will turn to the proof that the function $\lambda_{n,k}$ preserves Lebesgue measure for all odd $n\geq 7$ and all $k\geq 1$. We will do that based on the arguments on the representatives of circle maps.

\begin{defn}
	For every $j\in\{1,\ldots, n+k-1\}$ and some fixed integer $k\geq 1$ and an odd integer $n\geq 7$ denote by $I_j:=\frac{1}{n+k-1}[j-1,j)\subset [0,1)$ and let
	$$
	V_j:=I_j\times [0,1)
	$$
	denote the {\em $j$-th vertical strip} and let
	$$
	H_j:=[0,1)\times I_j
	$$
	denote the {\em $j$-th horizontal strip}.\\ 
	Let $p_1:[0,1)\times [0,1)\to [0,1)$ ($p_2:[0,1)\times [0,1)\to [0,1)$) be the natural projection onto the first (second) coordinate.
	Furthermore, define the maximal number of intervals of monotonicity of a representative $\Lambda_{n,k}:[0,1)\to [0,1)$ of a circle map $\lambda_{n,k}$ of the diameter exactly $\frac{1}{n+k-1}$ in $V_j$ by 
	$$
	\#(V_j) \text{ (resp. } \#(H_j)).
 	$$
Let us also define the subintervals  $\tilde{I}_j:=\frac{1}{n+k-1}[j-1,j)\subset \R$ that we will consider with respect to the liftings $\tilde \lambda_{n,k}$ of $\lambda_{n,k}$.
\end{defn}

\begin{obs}\label{obs:lambda2}
	 For each odd integer $n\geq 7$ and each integer $k\geq 1$ 
\begin{enumerate}
	\item \label{obs:lambda2(1)} the function $\tilde \lambda_{n,k}|_{\tilde I_j}$ is an odd function around the point $\Big(\frac{2j-1}{2(n+k-1)},\frac{2j-1}{2(n+k-1)}\Big)$ for any $j\in \Z$. 
	\item \label{obs:lambda2(3)}  $\#(V_j)=\scr[n]+\scr[n-1]$ for any $j\in  \{1,\ldots, n+k-1\}$ 
	(this follows from the definition of the representatives $\Lambda_{n,k}$ of $\lambda_{n,k}$, see the red part of the Figure~\ref{fig:lambda}).
	\item \label{obs:lambda2(4)} $\diam(\tilde \lambda_{n,k}({\tilde I_j}))=\frac{n}{n+k-1}$ for any $j\in \Z$.
\end{enumerate}
\end{obs}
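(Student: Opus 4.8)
All three items follow by unwinding the definitions of $\hat{\lambda}_{n,k}$ (formula~\eqref{eq:lambdahat}) and of its rescaled lift $\tilde\lambda_{n,k}$ (formula~\eqref{eq:lambda}), together with the symmetry and slope facts already recorded in Observations~\ref{obs:lambdahat}, \ref{obs:lambda1}. The plan is to prove them in the order (1), (3), (2), since the counting in (3) is the substantive point and (2) is an immediate consequence of (3) and the already-established uniform slope $\pm(\scr[n]+\scr[n-1])$ from Observation~\ref{obs:lambda1}\eqref{obs:lambda1(1)}.

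\emph{Item (1).} By Observation~\ref{obs:lambda1}\eqref{obs:lambda1(3)} the lift $\tilde\lambda_{n,k}$ fixes each endpoint $\tfrac{j-1}{n+k-1}$ and $\tfrac{j}{n+k-1}$ of $\tilde I_j$, and by the change of variables in~\eqref{eq:lambda} the restriction $\tilde\lambda_{n,k}|_{\tilde I_j}$ is, up to the affine rescaling $t\mapsto (t \bmod 1)(n+k-1)$ in the domain and $s\mapsto \tfrac{n}{n+k-1}s - \tfrac{n-1}{2(n+k-1)}$ in the range, a copy of $\hat\lambda_{n,k}|_{[i,i+1]}$ for the corresponding $i=j-1$. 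The map $\hat\lambda_{n,k}|_{[i,i+1]}$ is odd around the point $\bigl(i+\tfrac12,\ \tfrac{i}{n}+\tfrac{n-1}{2n}\bigr)$: this is visible from the three-branch formula~\eqref{eq:lambdahat}, because the middle branch is a rescaled $\sigma_n$, which is symmetric around its midpoint, and the two outer branches are $\sigma^R_{-(n-1)}$ and $\sigma^L_{-(n-1)}$, which are reflections of the two halves of $\sigma_{n-1}$ and hence swap under the central symmetry. Pushing this symmetry through the two affine rescalings places the centre exactly at $\bigl(\tfrac{2j-1}{2(n+k-1)},\tfrac{2j-1}{2(n+k-1)}\bigr)$, as claimed; I would give the one-line computation checking that the rescaled centre coordinates coincide. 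Periodicity of the lift then yields the statement for every $j\in\Z$.

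\emph{Item (3): the count $\#(V_j)=\scr[n]+\scr[n-1]$.} This is the heart of the observation and the step I expect to require the most care. By definition $\#(V_j)$ counts the monotone laps of the representative $\Lambda_{n,k}$ lying in the vertical strip $V_j$ that have diameter exactly $\tfrac{1}{n+k-1}$, i.e.\ the full-height laps of one $\hat\lambda_{n,k}$-block. The block $\hat\lambda_{n,k}|_{[i,i+1]}$ is built from a rescaled copy of $\sigma_n$ in the middle, flanked by rescaled copies of $\sigma^R_{-(n-1)}$ and $\sigma^L_{-(n-1)}$. One counts the laps of each ingredient: the simple $n$-crooked map $\sigma_n$ has, by its inductive construction as in \cite{LM,CO}, exactly $\scr[n]$ laps of diameter $\tfrac1n$ in its "interior" together with the appropriate number of half-height laps at the two ends coming from the recursion $\scr[n]=2\scr[n-1]+\scr[n-2]$; the two flanking half-blocks $\sigma^R_{-(n-1)}$, $\sigma^L_{-(n-1)}$ contribute the remaining laps so that the totals add up to $\scr[n]+\scr[n-1]$ full laps per block (the numbers $2,18,58,83,58,18,2$ in Figure~\ref{fig:hatlambda} for $n=7$ sum to $\scr[7]=239$, while counting laps of diameter $1/(n+k-1)=1/7$ in a single block gives $\scr[7]+\scr[6]=239+\ldots$; I would spell this bookkeeping out precisely using the recursion). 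Since every vertical strip $V_j$ corresponds, via~\eqref{eq:lambda}, to exactly one such block and the block structure is identical across $j$ (only the base-point $i=j-1$ and the additive constant change, neither of which affects lap counts), the count is independent of $j$ and equals $\scr[n]+\scr[n-1]$. The main obstacle here is purely combinatorial: correctly reconciling the inductive lap-count of $\sigma_n$ (and of the half-maps $\sigma^{L}_{-(n-1)},\sigma^{R}_{-(n-1)}$) at the matching points $i+\zeta$ and $i+1-\zeta$ so that partial laps glue into full ones and nothing is double-counted; the definition of $\zeta$ in~\eqref{eq:eta} is precisely tuned to make the slopes match across these junctions (Observation~\ref{obs:lambdahat}\eqref{obs:lambdahat} gives the common slope $\tfrac{\scr[n]+\scr[n-1]}{n}$), which is what guarantees the laps have the stated uniform diameter.

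\emph{Item (2): $\diam\bigl(\tilde\lambda_{n,k}(\tilde I_j)\bigr)=\tfrac{n}{n+k-1}$.} On $\tilde I_j$ the map is an affinely rescaled copy of $\hat\lambda_{n,k}|_{[i,i+1]}$, whose image is $\bigl[\tfrac{i}{n},\tfrac{i}{n}+\tfrac{n-1}{n}\bigr]\cup\{\text{values reached by the middle }\sigma_n\text{-branch}\}$; since the middle branch is a full-range rescaled $\sigma_n$ it attains both $\tfrac{i}{n}$ and $\tfrac{i}{n}+1$, so the image of the whole block has diameter $1$, and after multiplying by the domain-to-range scaling factor $\tfrac{n}{n+k-1}$ from~\eqref{eq:lambda} we get $\diam=\tfrac{n}{n+k-1}$. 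Again this is immediate once~\eqref{eq:lambdahat} and~\eqref{eq:lambda} are in hand, so I would dispatch it in two lines. Altogether, the proof is a careful but routine decoding of the defining formulas, with the lap-counting argument for~(3) being the only part that demands genuine attention.
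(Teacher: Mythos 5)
The paper records this as an \emph{Observation} and gives no proof beyond the parenthetical pointing at Figure~\ref{fig:lambda}, so there is no argument in the text to compare against; your plan of simply unwinding~\eqref{eq:lambdahat} and~\eqref{eq:lambda} and doing the lap bookkeeping is exactly what one would expect, and items (2) and (3) as you sketch them are sound.

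There is, however, a concrete slip in your item~(1) that you should correct before the ``one-line computation'' you defer will actually close. You assert that $\hat\lambda_{n,k}|_{[i,i+1]}$ is odd around $\bigl(i+\tfrac12,\ \tfrac{i}{n}+\tfrac{n-1}{2n}\bigr)$. But the second coordinate of the centre of symmetry must equal $\hat\lambda_{n,k}(i+\tfrac12)=\sigma_n(\tfrac12)+\tfrac{i}{n}=\tfrac12+\tfrac{i}{n}$, since for $\zeta<\tfrac12$ the point $i+\tfrac12$ lies in the middle branch and $\sigma_n$ fixes $\tfrac12$. So the centre of the range is $\tfrac{i}{n}+\tfrac12$, not $\tfrac{i}{n}+\tfrac{n-1}{2n}$; you have instead computed the value $\hat\lambda_{n,k}(i)=\tfrac{n-1}{n}\sigma^R_{-(n-1)}(\tfrac12)+\tfrac{i}{n}=\tfrac{n-1}{2n}+\tfrac{i}{n}$ at the left endpoint. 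The discrepancy matters: pushing \emph{your} value through the affine rescaling $s\mapsto\tfrac{n}{n+k-1}s-\tfrac{n-1}{2(n+k-1)}$ gives $\tfrac{i}{n+k-1}=\tfrac{2j-2}{2(n+k-1)}$, not the claimed $\tfrac{2j-1}{2(n+k-1)}$, so the final check you promise would fail. With the corrected centre $\tfrac{i}{n}+\tfrac12$ one indeed obtains $\tfrac{n}{n+k-1}\bigl(\tfrac{i}{n}+\tfrac12\bigr)-\tfrac{n-1}{2(n+k-1)}=\tfrac{2i+1}{2(n+k-1)}=\tfrac{2j-1}{2(n+k-1)}$, which is what the observation states. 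Everything else in your sketch holds once this is fixed.
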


\begin{prop}\label{prop:LebPres}
	For each odd integer $n\geq 7$ and each integer $k\geq 1$ and each $\alpha\in \mathbb{R}$ the map $\lambda_{n,k,\alpha}$ preserves Lebesgue measure on $\Ci$.
\end{prop}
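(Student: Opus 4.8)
The plan is to reduce the claim to the verification of the slope-sum condition \eqref{e:3} from Lemma~\ref{l:6} for the representative $\Lambda_{n,k}$ of $\lambda_{n,k}$, and then to deduce the case of general $\alpha$ for free. First note that conjugation by a rotation does not affect the property of preserving $\lambda$: since $r_\alpha$ preserves $\lambda$ (rotations are isometries of $\Ci$), $\lambda_{n,k,\alpha}=r_{-\alpha}\circ\lambda_{n,k}\circ r_\alpha$ preserves $\lambda$ iff $\lambda_{n,k}$ does. So it suffices to treat $\alpha=0$. By Lemma~\ref{l:3}, $\lambda_{n,k}\in C_\lambda(\Ci)$ is equivalent to its representative $\Lambda_{n,k}\colon[0,1)\to[0,1)$ preserving Lebesgue measure on $[0,1)$. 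By Observation~\ref{obs:lambda1}\eqref{obs:lambda1(1)}, $\tilde\lambda_{n,k}$ is piecewise affine with all slopes equal to $\pm(\scr[n]+\scr[n-1])$ in absolute value, so $\Lambda_{n,k}$ is piecewise affine with nonzero slopes and only finitely many points $E$ where the derivative fails to exist; hence Lemma~\ref{l:6} applies and the task becomes: for every $y\in[0,1)\setminus\Lambda_{n,k}(E)$,
\[
\sum_{x\in\Lambda_{n,k}^{-1}(y)}\frac{1}{|\Lambda_{n,k}'(x)|}=\frac{\#\Lambda_{n,k}^{-1}(y)}{\scr[n]+\scr[n-1]}=1,
\]
i.e. every regular value $y$ has exactly $\scr[n]+\scr[n-1]$ preimages under $\Lambda_{n,k}$.

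The heart of the argument is therefore the combinatorial/counting claim that $\#\Lambda_{n,k}^{-1}(y)=\scr[n]+\scr[n-1]$ for each regular $y$. Here is how I would organize the count. Fix the horizontal strip $H_m$ containing $y$, so $y\in\frac{1}{n+k-1}[m-1,m)$. The graph of $\Lambda_{n,k}$ over each vertical strip $V_j$ is (a rescaled copy of) the building block $\hat\lambda_{n,k}|_{[i,i+1]}$ from \eqref{eq:lambdahat}; by Observation~\ref{obs:lambda2}\eqref{obs:lambda2(4)} this block has vertical extent $\frac{n}{n+k-1}$, namely it sweeps across $n$ consecutive sub-strips of height $\frac{1}{n+k-1}$ (the sub-strip containing the base level $\frac{j-1}{n+k-1}$ together with the $\frac{n-1}{2}$ immediately above and the $\frac{n-1}{2}$ immediately below, after the vertical shift by $-\frac{n-1}{2(n+k-1)}$ in \eqref{eq:lambda}). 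Thus $V_j$ contributes preimages to $y$ exactly when $H_m$ is one of the $n$ sub-strips covered by the block over $V_j$. Next, within each such contributing $V_j$, the number of monotone laps of $\Lambda_{n,k}|_{V_j}$ that cross the horizontal level $y$ equals the number of monotonicity intervals of the block lying in the relevant sub-strip. This is precisely the content of the bracketed multiplicities appearing on the right of Figure~\ref{fig:hatlambda} (the numbers $2,18,58,83,58,18,2$ for $n=7$), which come from unwinding the inductive definition of $\sigma_n$ in \cite{LM}: the block $\hat\lambda_{n,k}$ is assembled from $\sigma^R_{-(n-1)}$, $\sigma_n$, and $\sigma^L_{-(n-1)}$, and the lap counts of $\sigma_n$ over its $n$ target sub-intervals are governed by the recursion $\scr[n]=2\scr[n-1]+\scr[n-2]$. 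Summing the per-sub-strip lap counts of a single block over all $n$ sub-strips it covers must give $\#(V_j)=\scr[n]+\scr[n-1]$ by Observation~\ref{obs:lambda2}\eqref{obs:lambda2(3)}; and by the odd-symmetry Observation~\ref{obs:lambda1}\eqref{obs:lambda1(2)}--\eqref{obs:lambda1(3)} and the periodicity-with-integer-shift of $\tilde\lambda_{n,k}$, the total number of laps of $\Lambda_{n,k}$ over $[0,1)$ crossing a fixed level $y\in H_m$ is the same no matter which $m$ is chosen. A clean way to see the latter: the multiset of contributions from the $n+k-1$ blocks, shifted by $j=1,\dots,n+k-1$, tiles the $n+k-1$ horizontal sub-strips so that each sub-strip receives the full collection of "block-positions" $1,\dots,n$ exactly once (a circular shift argument, using that the base levels $\frac{j-1}{n+k-1}$ run through all residues and each block spans $n$ consecutive levels, modulo the wraparound handled by the integer part $[t]$ in \eqref{eq:lambda}). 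Hence the total lap count over $[0,1)$ at level $y$ equals the sum of all per-position lap counts of one block, which is $\scr[n]+\scr[n-1]$, giving $\#\Lambda_{n,k}^{-1}(y)=\scr[n]+\scr[n-1]$ as required.

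I expect the main obstacle to be exactly this last bookkeeping step: making rigorous that, after the vertical shift by $-\frac{n-1}{2(n+k-1)}$ and the reduction modulo $1$ (the $[t]$ term), the $n+k-1$ translated copies of the building block distribute their monotone laps across the $n+k-1$ horizontal sub-strips so uniformly that every regular level is crossed the same number of times. One must be careful about (i) the wraparound of the block that straddles the top/bottom of $[0,1)$ — here the three-case structure of \eqref{eq:lambdahat}, with the half-blocks $\sigma^R_{-(n-1)}$ and $\sigma^L_{-(n-1)}$ at the ends of $[i,i+1]$ glued to the next block, is designed precisely so that these boundary laps assemble correctly into full laps after the identification $t\sim t+1$; and (ii) confirming the per-sub-strip lap counts of $\sigma_n$ really do sum to $\scr[n]+\scr[n-1]$ when the two end half-laps from adjacent blocks are included, which is where the recursion $\scr[n]=2\scr[n-1]+\scr[n-2]$ and the initial values $\scr[1]=1$, $\scr[2]=2$ enter. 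Once the count $\#\Lambda_{n,k}^{-1}(y)=\scr[n]+\scr[n-1]$ is established for all regular $y$, the slope condition \eqref{e:3} holds automatically since every slope has the same absolute value $\scr[n]+\scr[n-1]$, and Lemma~\ref{l:6} together with Lemma~\ref{l:3} finishes the proof.
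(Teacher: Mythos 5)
Your proposal is correct, and the underlying combinatorics is the same (count monotone laps of width $\frac{1}{n+k-1}$ crossing each horizontal sub-strip and verify that the count is always $\scr[n]+\scr[n-1]$), but you organise the counting step differently from the paper, and in a way that is arguably cleaner. The paper's proof sets up the box decomposition $B_{j,l}=V_j\cap H_l$ and proves $\#(H_l)=\#(V_j)$ via the diagonal symmetry $\#(B_{j,m})=\#(B_{m,j})$, which is derived from the odd symmetry of each block about its centre together with translation invariance along the diagonal; it then runs a two-case analysis depending on whether $H_l$ is a ``middle'' strip or an ``edge'' strip that picks up wraparound contributions. Your circular-shift / tiling argument dispenses with the palindrome symmetry entirely: it only needs (i) that every block has the same lap-profile $(\mu_1,\ldots,\mu_n)$ across the $n$ consecutive sub-strips it spans (translation invariance, Observation~\ref{obs:lambda1}\eqref{obs:lambda1(3)} plus the block construction) and (ii) that each $H_m$ is hit by exactly $n$ blocks and receives from them each of the positions $1,\ldots,n$ exactly once, with the wraparound at the top and bottom of $[0,1)$ handled automatically by working mod $1$. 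Summing gives $\sum_p\mu_p=\#(V_j)=\scr[n]+\scr[n-1]$ with no case split. This is a genuine simplification: among other things it makes transparent why the red ``wraparound'' boxes in the paper's Figure~\ref{fig:lambda} fall into place, and it exposes that the odd-symmetry observation, while true, is not actually needed for the measure-preservation count. Your framing of the slope condition via Lemma~\ref{l:6} and the reduction to $\alpha=0$ are identical to the paper's. The only thing I would press you on is to make the tiling assertion (ii) fully quantitative — pin down that the block over $V_j$ has image exactly $\tilde I_{j-(n-1)/2}\cup\cdots\cup\tilde I_{j+(n-1)/2}$ (using Observations~\ref{obs:lambda2}\eqref{obs:lambda2(4)} and the odd symmetry to locate the centre) and that as $j$ ranges over $\{m-(n-1)/2,\ldots,m+(n-1)/2\}$ mod $n+k-1$ the position of $H_m$ in the block over $V_j$ runs bijectively through $\{1,\ldots,n\}$ — since you correctly flag this bookkeeping as the main obstacle.
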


\begin{proof}
First note that if $\lambda_{n,k}\in C_{\lambda}(\Ci)$, then $\lambda_{n,k,\alpha}\in C_{\lambda}(\Ci)$. Therefore, we just need to argue that $\lambda_{n,k}\in C_{\lambda}(\Ci)$.
Here we need to consider the representatives $\Lambda_{n,k}$ of $\lambda_{n,k}$ on the interval $[0,1)$ to show that $\lambda_{n,k}$ preserves Lebesgue measure on $\mathbb{S}^1$.
The proof is similar to the proof of Proposition 2.18. from \cite{CO} after noting that the flip function $\mathrm{Fl}$ from Definition 2.12. in \cite{CO} has the same effect on the number of maximal monotone injective branches of the same length of $\lambda_{n,k}$ as considering representatives of maps $\lambda_{n,k}$. Let us sketch the arguments for completeness.

From Observation~\ref{obs:lambda2} (\ref{obs:lambda2(3)}) it holds that $\#(V_j)=\scr[n]+\scr[n-1]$ for all $j\in \{1,\ldots, n+k-1\}$ and since by Observation~\ref{obs:lambda1} (\ref{obs:lambda1(1)}) $\lambda_{n,k}$ (and thus $\Lambda_{n,k}$ as well) has uniform slope (in the absolute value) we only need to show that $\#(H_j)=\scr[n]+\scr[n-1]$ since $\diam(p_1(V_{j}))=\diam(p_2(H_{j'}))$ for any $j,j'\in \{1,\ldots, n+k-1\}$.
Consider the {\em boxes} $B_{j,l}:=V_j\cap H_l$ and the number of injective branches of $\lambda_{n,k}$ of $B_{j,l}$ denoted by $\#(B_{j,l})$  for all $j,l\in\{1,\ldots, n+k-1\}$, see Figure~\ref{fig:lambda}.

First assume that $j\in \{\frac{n+1}{2},\ldots, k+\frac{n-1}{2}\}$. Note that $\#(V_j)=\#(B_{j,j-\frac{n-1}{2}})+\ldots+\#(B_{j,j})+\ldots +\#(B_{j,j+\frac{n-1}{2}})=\#(B_{j,j})+2\#(B_{j+1,j})+2\#(B_{j+2,j})+ \ldots +2\#(B_{j+\frac{n-1}{2},j})${, due to Observation~\ref{obs:lambda2} (\ref{obs:lambda2(1)}) and since $n$ is odd}. But note that (see the highlighted middle part of Figure~\ref{fig:lambda}) $\#(B_{j,m})=\#(B_{m,j})$ for all $m\in \{j-\frac{n-1}{2},\ldots ,j+\frac{n-1}{2}\}$ and thus $\#(H_j)=\#(B_{j-\frac{n-1}{2},j})+\ldots+\#(B_{j,j})+\ldots +\#(B_{j+\frac{n-1}{2},j})=\#(B_{j,j-\frac{n-1}{2}})+\ldots+\#(B_{j,j})+\ldots +\#(B_{j,j+\frac{n-1}{2}})=\#(V_j)$ which finishes this part of the proof.

Now assume that $j\in \{1,\ldots, \frac{n-1}{2}\} \cup \{k+\frac{n+1}{2},\ldots, n+k-1\}$. Due to the symmetricity of the map, by Observation~\ref{obs:lambda1} (\ref{obs:lambda1(2)}) it is enough to check the claim for $j\in \{1,\ldots, \frac{n+1}{2}\}$. By the definition of a representative and Observation~\ref{obs:lambda1} (\ref{obs:lambda1(2)}) (symmetricity of the map $\Lambda_{n,k}$) it holds that $\#(H_j)=\#(B_{1,j})+\#(B_{2,j})\ldots+\#(B_{j,j})+\ldots +\#(B_{j+\frac{n-1}{2},j})=\#(B_{j,j})+2\#(B_{j+1,j})+2\#(B_{j+2,j})+\ldots +2\#(B_{j+\frac{n-1}{2},j})=\#(V_j)$ (see the lower three horizontal strips of Figure~\ref{fig:lambda}){, which finishes this part of proof by Observation~\ref{obs:lambda2} (\ref{obs:lambda2(3)})}.\\
Thus for every non-degenerate interval $J\subset [0,1)$ we can use Equation~\eqref{e:3} and therefore due to Lemma~\ref{l:3} $\lambda_{n,k}\in C_{\lambda}(\mathbb{S}^1)$.
\end{proof}
\vspace{-0.2cm}
\begin{figure}[!ht]
	\centering
	\begin{tikzpicture}[scale=0.6]
	\draw[thick] (0,0)--(0,-3)--(1,-3)--(1,-2)--(2,-2)--(2,-1)--(3,-1)--(3,0)--(10,0)--(10,13)--(9,13)--(9,12)--(8,12)--(8,11)--(7,11)--(7,10)--(0,10)--(0,0);
	\draw [thick,blue] (4,8)--(5,8)--(5,1)--(4,1)--(4,8);
	\draw [thick,blue] (8,4)--(8,5)--(1,5)--(1,4)--(8,4);
	\draw[dashed] (0,1)--(10,1);
		\draw[dashed] (0,2)--(10,2);
		\draw[dashed] (0,3)--(10,3);
		\draw[dashed] (0,4)--(10,4);
		\draw[dashed] (0,5)--(10,5);
		\draw[dashed] (0,6)--(10,6);
		\draw[dashed] (0,7)--(10,7);
	\draw[dashed] (0,8)--(10,8);
	\draw[dashed] (0,9)--(10,9);
	\draw[dashed] (0,-2)--(1,-2);
		\draw[dashed] (0,-1)--(2,-1);
		\draw[dashed] (0,0)--(3,0);
		\draw[dashed] (7,10)--(10,10);
		\draw[dashed] (8,11)--(10,11);
		\draw[dashed] (9,12)--(10,12);
 \draw[dashed] (1,-3)--(1,10);
 \draw[dashed] (2,-2)--(2,10);
 \draw[dashed] (3,-1)--(3,10);
 \draw[dashed] (4,0)--(4,10);
 \draw[dashed] (5,0)--(5,10);
 \draw[dashed] (6,0)--(6,10);
 \draw[dashed] (7,0)--(7,11);
 \draw[dashed] (8,0)--(8,12);
 \draw[dashed] (9,0)--(9,13);
 \draw[dotted,thick,orange] (-2.5,-2.5)--(12.5,12.5);
 \node at (0.5,0.5) {$83$};
 \node at (1.5,1.5) {$83$};
 \node at (2.5,2.5) {$83$};
 \node at (3.5,3.5) {$83$};
  \node at (4.5,4.5) {$83$};
   \node at (5.5,5.5) {$83$};
    \node at (6.5,6.5) {$83$};
     \node at (7.5,7.5) {$83$};
      \node at (8.5,8.5) {$83$};
       \node at (9.5,9.5) {$83$};
 \node at (0.5,1.5) {$58$};
 \node at (1.5,2.5) {$58$};
 \node at (2.5,3.5) {$58$};
 \node at (3.5,4.5) {$58$};  
  \node at (4.5,5.5) {$58$}; 
   \node at (5.5,6.5) {$58$}; 
    \node at (6.5,7.5) {$58$}; 
     \node at (7.5,8.5) {$58$}; 
      \node at (8.5,9.5) {$58$}; 
  \node at (0.5,2.5) {$18$};         
   \node at (1.5,3.5) {$18$};
     \node at (2.5,4.5) {$18$};    
       \node at (3.5,5.5) {$18$};    
         \node at (4.5,6.5) {$18$};    
           \node at (5.5,7.5) {$18$};    
             \node at (6.5,8.5) {$18$};
              \node at (7.5,9.5) {$18$};
             
  \node at (0.5,3.5) {$2$};
   \node at (1.5,4.5) {$2$};
    \node at (2.5,5.5) {$2$};
     \node at (3.5,6.5) {$2$};
      \node at (4.5,7.5) {$2$};
       \node at (5.5,8.5) {$2$};
        \node at (6.5,9.5) {$2$};
  \node at (1.5,0.5) {$58$};
 \node at (2.5,1.5) {$58$};
 \node at (3.5,2.5) {$58$};
 \node at (4.5,3.5) {$58$};  
 \node at (5.5,4.5) {$58$}; 
 \node at (6.5,5.5) {$58$}; 
 \node at (7.5,6.5) {$58$}; 
 \node at (8.5,7.5) {$58$}; 
 \node at (9.5,8.5) {$58$}; 
 \node at (2.5,0.5) {$18$};         
 \node at (3.5,1.5) {$18$};
 \node at (4.5,2.5) {$18$};    
 \node at (5.5,3.5) {$18$};    
 \node at (6.5,4.5) {$18$};    
 \node at (7.5,5.5) {$18$};    
 \node at (8.5,6.5) {$18$};
 \node at (9.5,7.5) {$18$};
 \node at (3.5,0.5) {$2$};
 \node at (4.5,1.5) {$2$};
 \node at (5.5,2.5) {$2$};
 \node at (6.5,3.5) {$2$};
 \node at (7.5,4.5) {$2$};
 \node at (8.5,5.5) {$2$};
 \node at (9.5,6.5) {$2$};
              
 \node at (10.5,0.5) {$239$};
  \node at (10.5,1.5) {$239$};
   \node at (10.5,2.5) {$239$};
    \node at (10.5,3.5) {$239$};
     \node at (10.5,4.5) {$239$};
      \node at (10.5,5.5) {$239$};
       \node at (10.5,6.5) {$239$};
        \node at (10.5,7.5) {$239$};
         \node at (10.5,8.5) {$239$};
          \node at (10.5,9.5) {$239$};
            \node at (0.5,10.5) {$239$};
             \node at (1.5,10.5) {$239$};
              \node at (2.5,10.5) {$239$};
               \node at (3.5,10.5) {$239$};
                \node at (4.5,10.5) {$239$};
                 \node at (5.5,10.5) {$239$};
                  \node at (6.5,10.5) {$239$};
                   \node at (7.5,11.5) {$239$};
                    \node at (8.5,12.5) {$239$};
                     \node at (9.5,13.5) {$239$};
  \node at (0.5,9.5-10) {\red$58$};
 \node at (0.5,8.5-10) {\red$18$};
 \node at (0.5,7.5-10) {\red$2$};
 \node at (1.5,9.5-10) {\red$18$};
 \node at (1.5,8.5-10) {\red$2$};
 \node at (2.5,9.5-10) {\red$2$};
 
  \node at (9.5,10.5) {\red$58$};
 \node at (8.5,10.5) {\red$18$};
 \node at (7.5,10.5) {\red$2$};
 \node at (9.5,11.5) {\red$18$};
 \node at (8.5,11.5) {\red$2$};
 \node at (9.5,12.5) {\red$2$};
 
   \node at (0.5,9.5) {\red$58$};
 \node at (0.5,8.5) {\red$18$};
 \node at (0.5,7.5) {\red$2$};
 \node at (1.5,9.5) {\red$18$};
 \node at (1.5,8.5) {\red$2$};
 \node at (2.5,9.5) {\red$2$};
 
 \node at (9.5,0.5) {\red $58$};
 \node at (8.5,0.5) {\red$18$};
 \node at (7.5,0.5) {\red$2$};
 \node at (9.5,1.5) {\red$18$};
 \node at (8.5,1.5) {\red$2$};
 \node at (9.5,2.5) {\red$2$};
 
 \node[circle,fill,orange, inner sep=1.5] at (0,0){};
\node[circle,fill,orange, inner sep=1.5] at (1,1){};
\node[circle,fill,orange, inner sep=1.5] at (2,2){};
\node[circle,fill,orange, inner sep=1.5] at (3,3){};
\node[circle,fill,orange, inner sep=1.5] at (4,4){};
\node[circle,fill,orange, inner sep=1.5] at (5,5){};
\node[circle,fill,orange, inner sep=1.5] at (6,6){};
\node[circle,fill,orange, inner sep=1.5] at (7,7){};
\node[circle,fill,orange, inner sep=1.5] at (8,8){};
\node[circle,fill,orange, inner sep=1.5] at (9,9){};
\node[circle,fill,orange, inner sep=1.5] at (10,10){};
\node[circle,fill,orange, inner sep=1.5] at (11,11){};
\node[circle,fill,orange, inner sep=1.5] at (12,12){};
\node[circle,fill,orange, inner sep=1.5] at (-1,-1){};
\node[circle,fill,orange, inner sep=1.5] at (-2,-2){};
	\end{tikzpicture}
	\caption{The numbers in this picture represent the sums of the maximal numbers of injective branches of $\lambda_{n,k}$ of diameter $\frac{1}{\scr[n]+\scr[n-1]}$ where $n=7$ and $k=4$ 
	(note also that boxes divide the interval in $n+k-1=10$ pieces) in each box $B_{j,l}$ (the number in the boxes represents $\#(B_{j,l})$). The numbers outside of the boxes represent $\#(V_j)$ and $\#(H_l)$ respectively. The vertical blue rectangle denotes the position of a rescaled version of the map $\hat \lambda_{7,k}$ from Figure~\ref{fig:hatlambda} (its copies are placed all along the diagonal). Note that the sum of maximal injective branches is for every horizontal and vertical strip $239$ due to the fact that we work with a lifting of a circle map.}\label{fig:lambda}
\end{figure}
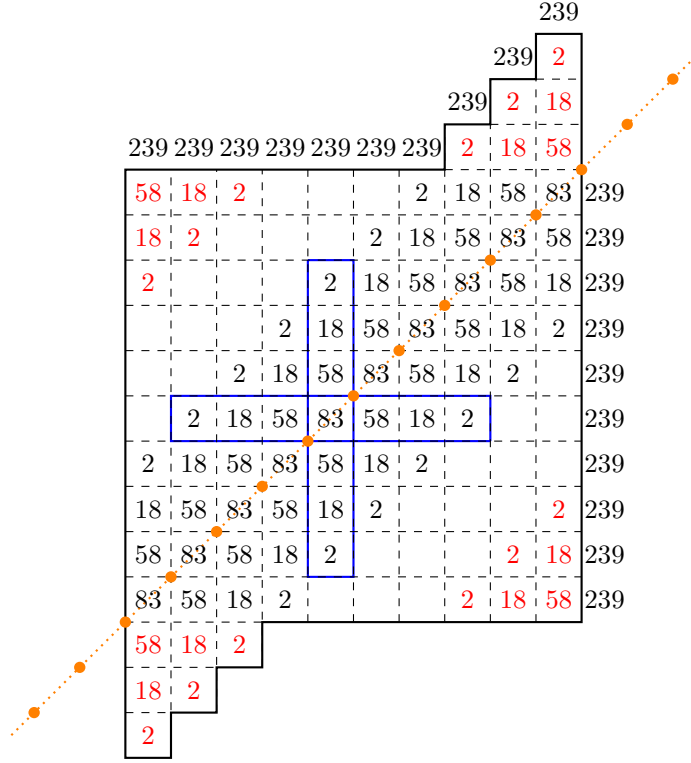

Now we will prove that $\lambda_{n,k}$ fits in the context of Proposition 5 of \cite{MT} (there such a perturbation map, which is however simpler, is denoted by $g$).

\begin{obs}\label{obs:gammaapart}
Let $x_j\in \tilde{I}_j$ be the minimal number such that
 $$\tilde\lambda_{n,k}(x_j):=\max\{\tilde\lambda_{n,k}(x); x\in \tilde I_j\}$$  for all  $j\in \Z$. 
Then
  $|x_{j-1}-x_{j}|=\frac{1}{n+k-1}$  and $|\tilde\lambda_{n,k}(x_{j-1})-\tilde\lambda_{n,k}(x_{j})|=\frac{1}{n+k-1}$ for all $j\in \Z$.
\end{obs}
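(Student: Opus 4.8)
The statement to prove is Observation~\ref{obs:gammaapart}: that the points $x_j$ where $\tilde\lambda_{n,k}$ attains its maximum on $\tilde I_j$ are spaced exactly $\frac{1}{n+k-1}$ apart, and so are their images under $\tilde\lambda_{n,k}$. The key structural input is the odd symmetry of $\tilde\lambda_{n,k}$ recorded in Observations~\ref{obs:lambda1}--\ref{obs:lambda2}: on each $\tilde I_j$ the map is odd around its midpoint $\big(\frac{2j-1}{2(n+k-1)},\frac{2j-1}{2(n+k-1)}\big)$, it has constant slope $\pm(\scr[n]+\scr[n-1])$, and it fixes each endpoint $\frac{j}{n+k-1}+i$ with $\diam(\tilde\lambda_{n,k}(\tilde I_j))=\frac{n}{n+k-1}$. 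So the plan is: (i) describe the shape of $\tilde\lambda_{n,k}|_{\tilde I_j}$ precisely enough to locate the argmax point relative to the left endpoint of $\tilde I_j$; (ii) observe that this relative location is the same for every $j$ by translation-periodicity of the construction; (iii) read off the two claimed equalities.

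\textbf{Key steps.} First I would unwind Definition~\ref{def:lambda}: on $\tilde I_j=\frac{1}{n+k-1}[j-1,j)$, the map $\tilde\lambda_{n,k}$ is an affinely rescaled copy of the building block $\hat\lambda_{n,k}|_{[i,i+1]}$ for the appropriate $i=j-1$, shifted vertically so that the endpoints of $\tilde I_j$ are fixed. By Observation~\ref{obs:lambda2}~(\ref{obs:lambda2(4)}) the image of $\tilde I_j$ has diameter exactly $\frac{n}{n+k-1}$, and since $\tilde\lambda_{n,k}$ fixes $\frac{j-1}{n+k-1}$ and $\frac{j}{n+k-1}$ and is centred (odd) about the midpoint of $\tilde I_j$, the image interval is $\big[\frac{j-1}{n+k-1}-\frac{n-1}{2(n+k-1)},\ \frac{j}{n+k-1}+\frac{n-1}{2(n+k-1)}\big]$, symmetric about the midpoint of $\tilde I_j$. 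From the explicit formula \eqref{eq:lambdahat} for $\hat\lambda_{n,k}$, the maximum of the block on $[i,i+1]$ is attained on the middle piece $\sigma_n((t-i-\zeta)/(1-2\zeta))+\frac{i}{n}$, at the (unique, by the structure of $\sigma_n$) point where $\sigma_n$ attains its maximum value $\frac{n-1}{n}$; call the preimage of that maximum in $[0,1]$ the constant $\tau_n\in(0,1)$ depending only on $n$. Rescaling back, $x_j$ equals $\frac{j-1}{n+k-1}+\frac{1}{n+k-1}\big(\zeta+(1-2\zeta)\tau_n\big)$, i.e. $x_j=\frac{j-1}{n+k-1}+\frac{c_n}{n+k-1}$ where $c_n:=\zeta+(1-2\zeta)\tau_n$ is independent of $j$. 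Hence $|x_{j-1}-x_j|=\frac{1}{n+k-1}$. For the images: $\tilde\lambda_{n,k}(x_j)$ is the top of the image interval computed above, namely $\frac{j}{n+k-1}+\frac{n-1}{2(n+k-1)}$, so $\tilde\lambda_{n,k}(x_{j-1})-\tilde\lambda_{n,k}(x_j)=-\frac{1}{n+k-1}$, giving absolute value $\frac{1}{n+k-1}$. The extension to all $i\in\Z$ (not just one period) is immediate from $\tilde\lambda_{n,k}(t+1)=\tilde\lambda_{n,k}(t)+1$.

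\textbf{Main obstacle.} The only genuine content is pinning down that the maximum of each block really lies on the central $\sigma_n$ piece (and not on one of the two flanking $\sigma_{-(n-1)}$ pieces), and that it is attained at a single point whose position depends only on $n$. This needs the fact from \cite{LM} that $\sigma_n$ attains the value $\frac{n-1}{n}$ and that the flanking pieces $\frac{n-1}{n}\sigma^R_{-(n-1)}(\cdot)+\frac{i}{n}$ and $\frac{n-1}{n}\sigma^L_{-(n-1)}(\cdot)+\frac{i+1}{n}$ only reach up to $\frac{n-1}{n}\cdot\frac{n-2}{n-1}+\frac{i}{n}=\frac{n-2}{n}+\frac{i}{n}$ and $\frac{n-2}{n}+\frac{i+1}{n}$ respectively, which is strictly below the maximum $\frac{n-1}{n}+\frac{i}{n}$ of the central piece; together with continuity this isolates the argmax on the central piece. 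The choice of $x_j$ as the \emph{minimal} such number in the definition then makes it well-defined even if $\sigma_n$ attained its maximum on a plateau, but in fact $\sigma_n$ is piecewise linear with nonzero slopes so the maximum is attained at finitely many points and we simply take the leftmost; its offset from the left endpoint of $\tilde I_j$ is the same for all $j$ by construction. Everything else is bookkeeping with the affine rescaling.
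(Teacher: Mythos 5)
The paper gives no proof of this Observation --- it is stated as self-evident from the construction and Figures~\ref{fig:hatlambda}--\ref{fig:lambda} --- so the comparison is against what the construction actually says. Your overall plan (unwind Definition~\ref{def:lambda}, identify the block structure, and note that the arg-max has the same offset in each block by translation-periodicity) is indeed the intended reading, and your deductions of the image value $\tilde\lambda_{n,k}(x_j)=\frac{j}{n+k-1}+\frac{n-1}{2(n+k-1)}$ from the odd symmetry and diameter are correct. However, the ``main obstacle'' paragraph contains a factual error about where the maximum sits. The map $\sigma_n\colon[0,1]\to[0,1]$ attains its maximum value $1$ at the endpoint $t=1$, not the value $\frac{n-1}{n}$ at an interior point $\tau_n\in(0,1)$; $\frac{n-1}{n}$ is merely the height of the interior local maxima (visible in Figure~\ref{fig:sigma}). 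Consequently, in the block $\hat\lambda_{n,k}|_{[i,i+1]}$ the maximum is attained at the \emph{junction} $t=i+1-\zeta$ between the central piece and the right flank: the central piece reaches $\sigma_n(1)+\frac{i}{n}=1+\frac{i}{n}$ there, and the right piece $\frac{n-1}{n}\sigma^L_{-(n-1)}(\cdot)+\frac{i+1}{n}$ reaches $\frac{n-1}{n}\cdot 1+\frac{i+1}{n}=1+\frac{i}{n}$ at the same $t$ (since $\sigma^L_{-(n-1)}(0)=1$, not the value $\frac{n-2}{n-1}$ you implicitly use). So the claim that the flanking pieces sit \emph{strictly below} the maximum of the central piece is false for the right flank, and your isolation argument does not quite work as written. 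This does not sink the result: the arg-max is still a single point $t=i+1-\zeta$, still depends on $j$ only through the integer shift, and plugging $\tau_n=1$ into your formula recovers $x_j=\frac{j-\zeta}{n+k-1}$, giving both asserted equalities. But you should correct the description of $\sigma_n$ and handle the fact that the block maximum lands on a boundary between two pieces of \eqref{eq:lambdahat} rather than in the interior of the central piece.
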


\begin{obs}\label{obs:gammaapart2}
	If $n$ is odd and $k$ is even then $\lambda_{n,k}(t)+1/2=\lambda_{n,k}(t+1/2)$ for every $t\in \mathbb{S}^1$.
\end{obs}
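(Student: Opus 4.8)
\textbf{Proof plan for Observation~\ref{obs:gammaapart2}.}

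The plan is to read off the identity $\lambda_{n,k}(t+1/2)=\lambda_{n,k}(t)+1/2$ directly from the defining formula \eqref{eq:lambda} for the lifting $\tilde\lambda_{n,k}$, together with the symmetry properties of $\hat\lambda_{n,k}$ recorded in Observation~\ref{obs:lambdahat} and the shape of the building blocks in \eqref{eq:lambdahat}. Since $k$ is even, the number $\tfrac12(n+k-1)$ is a half-integer of a specific parity: write $n+k-1=2m+1$ with $m=\tfrac{n-1}{2}+\tfrac{k}{2}\in\N$, so that adding $1/2$ to $t$ shifts the argument $(t \bmod 1)(n+k-1)$ by exactly $m+\tfrac12$. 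The key point is then a ``half-period shift by $m+\tfrac12$'' symmetry of $\hat\lambda_{n,k}$ on $[0,n+k-1]$: I would first establish that
\begin{equation}\label{eq:halfshift}
\hat\lambda_{n,k}\Bigl(u+m+\tfrac12\Bigr)=\hat\lambda_{n,k}(u)+\frac{n+k-1}{2n}
\end{equation}
for all $u$ in the appropriate range (modulo the obvious wrap-around at the endpoints of $[0,n+k-1]$, which is harmless once we pass back to the circle). This is exactly the statement that translating the graph of $\hat\lambda_{n,k}$ by the vector $(m+\tfrac12,\tfrac{n+k-1}{2n})$ leaves it invariant; note $2\cdot\frac{n+k-1}{2n}=\frac{n+k-1}{n}$, half of the full vertical range $\frac{2n+k-2}{n}$, so this is consistent.

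To prove \eqref{eq:halfshift} I would combine two symmetries of $\hat\lambda_{n,k}$. The point symmetry of Observation~\ref{obs:lambdahat}\eqref{obs:lambdahat2} says $\hat\lambda_{n,k}$ is odd about the centre $\bigl(\tfrac{n+k-1}{2},\tfrac{2n+k-2}{2n}\bigr)$, i.e. $\hat\lambda_{n,k}(n+k-1-u)+\hat\lambda_{n,k}(u)=\tfrac{2n+k-2}{n}$. On the other hand, the explicit piecewise formula \eqref{eq:lambdahat} shows $\hat\lambda_{n,k}$ is built from a single ``bump'' of horizontal width $1$ and vertical rise $1/n$ repeated with the translation $\hat\lambda_{n,k}(u+1)=\hat\lambda_{n,k}(u)+\tfrac1n$ whenever both arguments lie in the interior region where the pattern is periodic; more precisely, each unit block $\hat\lambda_{n,k}|_{[i,i+1]}$ is the previous one shifted by $(1,\tfrac1n)$. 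Iterating this integer-step translation $m$ times and then composing with the involutive point symmetry converts the shift by $m$ into a shift by the remaining half-unit, which — using that the central block $[\tfrac{n+k-1}{2}-\tfrac12,\tfrac{n+k-1}{2}+\tfrac12]$ is itself symmetric about its midpoint (a consequence of $n$ odd, exactly as used in the proof of Proposition~\ref{prop:LebPres}) — yields precisely \eqref{eq:halfshift}. The parity hypotheses enter here twice: $k$ even guarantees $\tfrac{n+k-1}{2}$ lands on a half-integer so that the half-unit shift is centred on a block boundary, and $n$ odd guarantees the relevant block has the mirror symmetry needed.

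Finally I would substitute \eqref{eq:halfshift} into \eqref{eq:lambda}. For $t\in\R$ with $t\bmod1<1/2$ we have $(t+\tfrac12)\bmod1=(t\bmod1)+\tfrac12$ and $[t+\tfrac12]=[t]$, so
\begin{equation*}
\tilde\lambda_{n,k}(t+\tfrac12)=[t]+\frac{n}{n+k-1}\hat\lambda_{n,k}\bigl((t\bmod1)(n+k-1)+m+\tfrac12\bigr)-\frac{n-1}{2(n+k-1)},
\end{equation*}
and \eqref{eq:halfshift} turns the $\hat\lambda_{n,k}$ term into $\hat\lambda_{n,k}((t\bmod1)(n+k-1))+\tfrac{n+k-1}{2n}$; the extra $\frac{n}{n+k-1}\cdot\frac{n+k-1}{2n}=\tfrac12$ is exactly the desired $+\tfrac12$, so $\tilde\lambda_{n,k}(t+\tfrac12)=\tilde\lambda_{n,k}(t)+\tfrac12$. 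The case $t\bmod1\ge 1/2$ is symmetric: there $[t+\tfrac12]=[t]+1$ and $(t+\tfrac12)\bmod1=(t\bmod1)-\tfrac12$, and one uses \eqref{eq:halfshift} in the form $\hat\lambda_{n,k}(v)=\hat\lambda_{n,k}(v-m-\tfrac12)-\tfrac{n+k-1}{2n}$ together with the integer-shift $\tilde\lambda_{n,k}(t+1)=\tilde\lambda_{n,k}(t)+1$ from $\degr(\lambda_{n,k})=1$; the bookkeeping of the integer parts again produces a net $+\tfrac12$. Projecting to $\Ci$ gives $\lambda_{n,k}(t+1/2)=\lambda_{n,k}(t)+1/2$ for every $t\in\mathbb{S}^1$. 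I expect the only real obstacle to be the careful verification of \eqref{eq:halfshift} at the block boundaries and at the two endpoints $0$ and $n+k-1$ — i.e. checking that the three-piece formula \eqref{eq:lambdahat} glues correctly under the half-unit shift and that the reflected end-pieces $\sigma^R_{-(n-1)}$, $\sigma^L_{-(n-1)}$ match up — but this is a finite, routine check once the central symmetry and the integer-periodicity of the blocks are in hand.
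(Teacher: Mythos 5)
You have a genuine parity error at the very start of the plan, and it derails the whole argument. Since $n$ is odd and $k$ is even, $n+k$ is odd, hence $n+k-1$ is \emph{even}, not odd. With $m=\tfrac{n-1}{2}+\tfrac{k}{2}$ as you define it, the correct identity is $n+k-1=2m$, not $n+k-1=2m+1$ (indeed $\tfrac12(n+k-1)=m$ is an integer, not a half-integer; compare the paper's running example $n=7$, $k=4$, where $n+k-1=10$). Consequently, adding $1/2$ to $t$ shifts the argument $(t\bmod 1)(n+k-1)$ of $\hat\lambda_{n,k}$ by exactly the \emph{integer} $m$, not by $m+\tfrac12$. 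Your proposed half-unit shift identity \eqref{eq:halfshift} therefore involves a translation by the half-integer $m+\tfrac12=\tfrac{n+k}{2}$, and it is not justified by the block translation $\hat\lambda_{n,k}(u+1)=\hat\lambda_{n,k}(u)+\tfrac1n$, which only respects integer shifts; the machinery you invoke (odd point symmetry, mirror symmetry of the central block) to bridge that remaining half-unit is a complication forced on you by the false parity, and it does not combine to give a half-unit translation of the graph.

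The correct (and simpler) route exploits precisely the integer shift you already noticed. Because each unit block of $\hat\lambda_{n,k}$ is the preceding one translated by $(1,\tfrac1n)$, iterating gives $\hat\lambda_{n,k}(u+m)=\hat\lambda_{n,k}(u)+\tfrac{m}{n}$ for all $u$ with both $u,u+m\in[0,n+k-1]$, where $m=\tfrac{n+k-1}{2}\in\N$. Substituting into \eqref{eq:lambda}: for $t\bmod1<\tfrac12$ one has $[t+\tfrac12]=[t]$ and $(t+\tfrac12)\bmod1=(t\bmod1)+\tfrac12$, so the $\hat\lambda_{n,k}$-argument shifts by $m$ and the vertical increment is $\tfrac{n}{n+k-1}\cdot\tfrac{m}{n}=\tfrac12$; for $t\bmod1\ge\tfrac12$ one has $[t+\tfrac12]=[t]+1$ and $(t+\tfrac12)\bmod1=(t\bmod1)-\tfrac12$, and the same identity applied at $u=(t\bmod1)(n+k-1)-m$ combined with the extra $+1$ from the integer part again yields a net $+\tfrac12$. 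Here the hypotheses enter only once: $n$ odd and $k$ even are used solely to ensure $m=\tfrac{n+k-1}{2}$ is an integer, so that the shift lands on a block boundary and the integer translation property applies. Neither the odd point symmetry of Observation~\ref{obs:lambdahat}\eqref{obs:lambdahat2} nor any mirror symmetry of a ``central block'' is needed.
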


The perturbations $\lambda_{n,k,\alpha}$ will be used later when we will prove that there exists a parameterized family of pseudo-circle attractors with continuously varying prime end rotation number.

We have the following result as a modification of Lemma 2.20 from \cite{CO}. Since both the statement and the proof need several modifications we give it in its entirety for the sake of completeness. Let us note that the following lemma is the part of the proof of main theorem from this section where we have the most changes comparing to the approach in \cite{CO}.

\begin{lem}\label{lem:MincUpdt}
	Let $\lambda_{n,k,\alpha}: \mathbb{S}^1\to \mathbb{S}^1$ be defined as in Definition~\ref{def:lambda}. 
	Set $\eps:=\frac{n-1}{n+k-1}$, $\gamma:=\frac{1}{n+k-1}$. Then the following statements hold  for every odd integer $n\geq 7$ and $k\geq 1$ and  $|\alpha|<\gamma/2$:
	\begin{enumerate}[(i)]
		\item\label{Lcra:1} $\rho(\lambda_{n,k,\alpha},\mathrm{id}_{\mathbb{S}^1})<\eps/2+\gamma+\alpha$,
		\item\label{Lcra:2} each map $\omega:[0,1]\to \mathbb{S}^1$ with $\lambda(\omega([0,1]))<\eps$  is $(\lambda_{n,k},3\gamma)$-crooked,
		\item\label{Lcra:3} for each subarc $A$ of $\mathbb{S}^1$ we have $\lambda(\lambda_{n,k,\alpha}(A)) \geq \lambda(A)$,
		and if, additionally, $\lambda(A)> \gamma$, then

		\begin{enumerate}[(a)]
			\item\label{Lcra:4} $\lambda(\lambda_{n,k,\alpha}(A)) > \eps/2$, 
			\item\label{Lcra:5}  $A \subset \lambda_{n,k,\alpha}(A)$ and
			\item\label{Lcra:6} $\lambda_{n,k,\alpha}(B) \subset B(\lambda_{n,k,\alpha}(A),r + \gamma)$ for each non-negative real number $r$ and each set $B \subset B(A,r)$.
		\end{enumerate}	
  
	\end{enumerate}
\end{lem}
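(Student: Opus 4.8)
The plan is to verify the six assertions essentially one at a time, exploiting the explicit structure of $\tilde\lambda_{n,k}$ recorded in Observations~\ref{obs:lambda1}, \ref{obs:lambda2}, \ref{obs:gammaapart}, and Lemma~\ref{lem:hatcrooked}, and then to track the effect of the conjugation by the rotation $r_\alpha$.

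First I would handle \eqref{Lcra:1}. By Observation~\ref{obs:lambda1}\eqref{obs:lambda1(3)} the lifting $\tilde\lambda_{n,k}$ fixes every point of the form $\frac{j}{n+k-1}+i$, and on each interval $\tilde I_j$ it is piecewise linear with image of diameter $\frac{n}{n+k-1}$ (Observation~\ref{obs:lambda2}\eqref{obs:lambda2(4)}); combined with the oddness around the midpoint of $\tilde I_j$ (Observation~\ref{obs:lambda2}\eqref{obs:lambda2(1)}) this forces $|\tilde\lambda_{n,k}(t)-t|\le \tfrac12\cdot\tfrac{n-1}{n+k-1}+\gamma=\eps/2+\gamma$ for all $t$ (the image of $\tilde I_j$ sits symmetrically around the fixed midpoint and has ``radius'' $\tfrac{n}{2(n+k-1)}$, while $\tilde I_j$ has length $\gamma$, giving the bound after a short estimate). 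Conjugating by $r_\alpha$ moves points by at most $|\alpha|$ on each side, so $\rho(\lambda_{n,k,\alpha},\mathrm{id})<\eps/2+\gamma+|\alpha|$, which is what is claimed (with the hypothesis $|\alpha|<\gamma/2$ only needed later). For \eqref{Lcra:2}: if $\lambda(\omega([0,1]))<\eps=\frac{n-1}{n+k-1}$ then, passing to a lift, the lifted image of $\omega$ has diameter $<\frac{n-1}{n+k-1}$, so after rescaling by the factor $n+k-1$ it lands in an interval of $\hat\lambda_{n,k}$-values of diameter $<\frac{n-1}{n}$; Lemma~\ref{lem:hatcrooked} then gives $\frac3n$-crookedness of $\hat\lambda_{n,k}$ between the relevant values, and rescaling back by $\frac{n}{n+k-1}$ turns $\frac3n$ into $\frac{3}{n+k-1}=3\gamma$. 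One must check that the definition of $(\lambda_{n,k},3\gamma)$-crookedness (Definition~\ref{def:KTT}) is literally the rescaled version of $\frac{3}{n}$-crookedness of $\hat\lambda_{n,k}$; this is the step where the equivalence of Definitions~\ref{def:KTT} and \ref{def:crooked} and the explicit formula \eqref{eq:lambda} for $\tilde\lambda_{n,k}$ in terms of $\hat\lambda_{n,k}$ are used.

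For \eqref{Lcra:3}, the monotonicity $\lambda(\lambda_{n,k,\alpha}(A))\ge\lambda(A)$ is where I expect the main work: it follows from the fact that on every interval of length $\gamma$ the map $\tilde\lambda_{n,k}$ has image of diameter $n\gamma\ge\gamma$ and fixes the endpoints of the $\tilde I_j$'s, so $\tilde\lambda_{n,k}$ expands every subinterval of $[0,1)$ measure-wise; precisely, any subarc $A$ either is contained in one $\tilde I_j$ (then its image has diameter $\ge$ length of $A$ by the piecewise-linear expansion, since the slope is $\pm(\scr[n]+\scr[n-1])\ge 1$ and the image is an interval once we note $A\subset\tilde I_j$ maps into the connected image), or meets several, in which case one argues strip by strip. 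The cleanest route is to note that, because $\tilde\lambda_{n,k}$ has constant absolute slope $N:=\scr[n]+\scr[n-1]$ and fixes each $\frac{j}{n+k-1}$, for any arc $A$ we have $\lambda(\lambda_{n,k}(A))\ge \lambda(A)$ directly — if $A$ is short enough to be monotone the slope bound gives it, and if not then $\lambda_{n,k}(A)$ already covers a full fundamental subinterval. Conjugating by a rotation preserves $\lambda$, so the inequality passes to $\lambda_{n,k,\alpha}$. Then assuming $\lambda(A)>\gamma$: $A$ contains some $\tilde I_j$ up to the rotation, hence $\lambda(\lambda_{n,k,\alpha}(A))\ge \operatorname{diam}(\tilde\lambda_{n,k}(\tilde I_j)) - 2|\alpha| = \frac{n}{n+k-1}-2|\alpha| > \frac{n}{n+k-1}-\gamma = \frac{n-1}{n+k-1}=\eps > \eps/2$, giving \eqref{Lcra:4} (here $|\alpha|<\gamma/2$ is exactly what is needed). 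For \eqref{Lcra:5}, $A\subset\lambda_{n,k,\alpha}(A)$: since $A$ contains a translate of some $\tilde I_j=[\frac{j-1}{n+k-1},\frac{j}{n+k-1})$ (after accounting for $r_\alpha$), and $\tilde\lambda_{n,k}$ fixes the endpoints $\frac{j-1}{n+k-1},\frac{j}{n+k-1}$ while its image on $\tilde I_j$ is the whole interval $[\frac{j-1}{n+k-1}-\text{something},\dots]$ of diameter $n\gamma$ centered at the midpoint — so the image contains a $\gamma$-neighbourhood of $\tilde I_j$ on each side, in particular contains $A$ provided $A$ is not too much longer than $\gamma$; for longer $A$ one covers it by finitely many such $\tilde I_j$'s and takes the union. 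The rotation $r_\alpha$ shifts everything consistently, and the slack $n\gamma - \gamma = (n-1)\gamma$ on each side comfortably absorbs the displacement $|\alpha|<\gamma/2$. Finally \eqref{Lcra:6} is soft: for $B\subset B(A,r)$ and any $b\in B$ there is $a\in A$ with $d(a,b)<r$; since $\lambda_{n,k,\alpha}$ has absolute slope on the lift equal to $N\ge 1$ this does \emph{not} immediately help, so instead one argues that $\lambda_{n,k,\alpha}(b)$ and $\lambda_{n,k,\alpha}(a)$ differ by at most $r+\gamma$ by using again that $\lambda_{n,k,\alpha}$ moves each point by less than $\gamma$ from the ``identity direction'' on each fundamental interval — more precisely, $d(\lambda_{n,k,\alpha}(b),\lambda_{n,k,\alpha}(a)) \le d(\lambda_{n,k,\alpha}(b),b)+d(b,a)+d(a,\lambda_{n,k,\alpha}(a))$ is the wrong bound; the right statement is that on each $\tilde I_j$ the oscillation of $\tilde\lambda_{n,k}$ away from a fixed affine model is controlled, and combining with $\lambda_{n,k,\alpha}(A)\supset A$ from \eqref{Lcra:5} one gets $\lambda_{n,k,\alpha}(B)\subset B(\lambda_{n,k,\alpha}(A),r+\gamma)$.

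The main obstacle, as in \cite[Lemma~2.20]{CO}, is keeping the rotation parameter $\alpha$ under control simultaneously in all six items: the estimates in \eqref{Lcra:4}–\eqref{Lcra:6} all have a $\gamma$-sized safety margin coming from the length of the fundamental intervals $\tilde I_j$, and the conjugation by $r_\alpha$ eats into that margin by $|\alpha|$ on each side, so the hypothesis $|\alpha|<\gamma/2$ is precisely calibrated to leave everything valid. Concretely, for each claim I would first prove it for $\lambda_{n,k}$ (i.e. $\alpha=0$) using the explicit formulas and Observations~\ref{obs:lambda1}–\ref{obs:gammaapart2}, then observe that $r_\alpha$ is a $\lambda$-preserving isometry of $\mathbb{S}^1$ and transfer the statement, picking up the $|\alpha|$-error terms and checking they are absorbed. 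The only genuinely non-formal input beyond \cite{CO} is Lemma~\ref{lem:hatcrooked}, used in \eqref{Lcra:2}, together with the rescaling that converts $\frac3n$-crookedness of $\hat\lambda_{n,k}$ into $3\gamma$-crookedness of $\lambda_{n,k}$ — everything else is a careful but routine bookkeeping of the piecewise-linear model, parallel to Definitions~2.12 and 2.18 and Lemma~2.20 of \cite{CO}.
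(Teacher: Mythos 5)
Your treatment of \eqref{Lcra:1} matches the paper's (the bound $\rho(\lambda_{n,k,0},\mathrm{id})<\eps/2+\gamma$ plus the extra $|\alpha|$ from conjugation), and your rescaling idea for \eqref{Lcra:2} is the right one, though you do not address the case where the lifted image of $\omega$ is \emph{not} an interval between $\omega(0)$ and $\omega(1)$ (i.e.\ $\omega$ wraps around): the paper handles this by observing that $\sigma^L_{-(n-1)}\sigma^R_{-(n-1)}=\sigma_{-(n-1)}$, so the graph of $\lambda_{n,k}$ on $\Ci$ is a cyclic repetition of $\sigma_n\sigma_{-(n-1)}$ and Lemma~\ref{lem:hatcrooked} still applies after a change of the circle's parametrization. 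That case is not a triviality and you should at least note it.

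There is, however, a genuine gap in your argument for \eqref{Lcra:4}. You assert that $\lambda(A)>\gamma$ forces $A$ to contain (a translate of) some full fundamental interval $\tilde I_j$, and from this you deduce $\lambda(\lambda_{n,k,\alpha}(A))>\eps$. This is false: an arc of length $\gamma+\delta$ with $\delta$ small can sit astride the boundary point $\frac{j}{n+k-1}$ and intersect both $\tilde I_j$ and $\tilde I_{j+1}$ without containing all of either (e.g.\ $A=[\gamma/2,\,3\gamma/2+\delta]$ misses the left part of $\tilde I_1$ and the right part of $\tilde I_2$), and the rotation $r_\alpha$ with $|\alpha|<\gamma/2$ cannot repair this. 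That is exactly why the lemma only claims $\eps/2$ rather than $\eps$. The correct input (used by the paper) is Observation~\ref{obs:gammaapart}: the maximum points $x_j$ of $\tilde\lambda_{n,k}|_{\tilde I_j}$ form a $\gamma$-spaced net and likewise the minimum points, so an arc of length $>\gamma$ must capture at least one maximum and one minimum of the \emph{same or adjacent} fundamental intervals, which yields the $\eps/2$ estimate and also the covering $A\subset\lambda_{n,k}(A)$ in \eqref{Lcra:5}. Your argument for \eqref{Lcra:5} inherits the same gap, and you would in any case need the paper's delicate subcases depending on the position of $r_\alpha(A)$ relative to the $\tilde I_i$'s, which you wave away.

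Your sketch for \eqref{Lcra:6} also rests on a wrong idea. The map $\lambda_{n,k,\alpha}$ does \emph{not} move each point by less than $\gamma$; by \eqref{Lcra:1} it moves points by up to roughly $\eps/2\approx\frac{n-1}{2(n+k-1)}$, which for large $n$ dwarfs $\gamma$. You suspect the triangle inequality is ``the wrong bound'' but then offer no substitute. The actual mechanism, which Observation~\ref{obs:gammaapart} supplies and which you never invoke, is a quasi-periodicity of the graph: $|x_{j}-x_{j-1}|=\gamma$ \emph{and} $|\tilde\lambda_{n,k}(x_j)-\tilde\lambda_{n,k}(x_{j-1})|=\gamma$, so shifting the argument by an integer multiple $M\gamma$ shifts the extremal values by the same amount. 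That is what makes $\tilde\lambda_{n,k}([a,b+r])\subset B(\tilde\lambda_{n,k}([a,b]),r+\gamma)$ true; without it your outline does not close.

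So: \eqref{Lcra:1}, \eqref{Lcra:2} (modulo the wraparound case) and the first sentence of \eqref{Lcra:3} are essentially the paper's arguments, and your instinct that everything reduces to the $\alpha=0$ case plus an $|\alpha|$-error is correct in spirit; but the core combinatorial inputs — the $\gamma$-spaced extrema net for \eqref{Lcra:4}–\eqref{Lcra:5}, and the $\gamma$-shift periodicity of extremal values for \eqref{Lcra:6} — are absent, and the substitute you give for \eqref{Lcra:4} is outright false.
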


\begin{proof}
	\eqref{Lcra:1} From the construction of the map $\lambda_{n,k}$ it follows that (see Figure~\ref{fig:hatlambda} and \ref{fig:lambda}): $$d(t-\lambda_{n,k}(t))<\frac{n+1}{2}\frac{1}{n+k-1}=\frac{n-1}{2(n+k-1)}+\frac{1}{n+k-1}=\eps/2+\gamma.$$ Therefore, $d(\lambda_{n,k}(r_{\alpha}(x)), r_{\alpha}(x))<\eps/2+\gamma$ and so $\rho(\lambda_{n,k,\alpha}(x),x)<\eps/2+\gamma+\alpha$ for every $x\in \mathbb{S}^1$, which proves \eqref{Lcra:1}. 
	
	\eqref{Lcra:2} Fix any  $\omega:[0,1]\to \mathbb{S}^1$ and assume that $\lambda(\omega([0,1]))<\eps$. Let us consider two cases. 
	Let $c:=\min\{\omega(0),\omega(1)\}\subset [0,1)$ and $d:=\max\{\omega(0),\omega(1)\}\subset [0,1)$.
	
  The first case is that
    $[c,d]\subset \omega([0,1])$.
  Since only values at endpoints are important for crookedness, without loss of generality we can view $\omega$ as a linear map
    $\omega:[0,1]\to [c,d]$ and identify $\lambda_{n,k,\alpha}\circ \omega$ with $\tilde \lambda_{n,k,\alpha}\circ \omega$.
Applying proper rescaling factor it follows from Lemma~\ref{lem:hatcrooked} that $\omega$ is $(\lambda_{n,k,\alpha},3\gamma)$-crooked. 

  The second case $[c,d]\not\subset \omega([0,1])$. But $\tilde \lambda_{n,k,\alpha}$ restricted to $[0,1]$ is actually
  $$\sigma_{-(n-1)}^R\sigma_n \sigma_{-(n-1)} \sigma_n \sigma_{-(n-1)}\sigma_n\ldots \sigma_{-(n-1)}\sigma_n\sigma_{-(n-1)}^L,$$
and therefore $\lambda_{n,k,\alpha}$ on circle represented by $[0,1)$ can be also identified with this map. But $\sigma_{-(n-1)}^L\sigma_{-(n-1)}^R=\sigma_{-(n-1)}$ and hence we may also see graph of $\lambda_{n,k,\alpha}$ as a cyclic repetition of $\sigma_n\sigma_{-(n-1)}$. Therefore, argument for the second case is the same as the argument for the first case, with a proper change of parametrization of $\mathbb{S}^1$. 
This proves that $\omega$ is $(\lambda_{n,k},3\gamma)$-crooked.

	\eqref{Lcra:3} 
	It is enough to prove the statements for $\tilde \lambda_{n,k}$, since they imply analogous statements for $\lambda_{n,k}$ because subarcs $A\subset \mathbb{S}^1$ are connected. In the following two paragraphs we will also additionally comment how switching from $\lambda_{n,k}$ to $\lambda_{n,k,\alpha}$ for $|\alpha|< \gamma/2$ affects the arguments we provide.\\
 Let $\tilde A:=[a,a']\subset \mathbb{R}$ denote a subinterval obtained through the lifting of a subarc $A\subset \mathbb{S}^1$. If $\tilde A$ is such that $\diam(\tilde A)< \frac{2}{(\scr[n]+\scr[n-1])(n+k-1)}$, then by Observation~\ref{obs:lambda1} (\ref{obs:lambda1(1)}) it holds that $\diam(\tilde\lambda_{n,k}(\tilde A))> (\scr[n]+\scr[n-1])\frac{\diam(\tilde A)}{2}>\diam(\tilde A)$. If a subinterval $\tilde A\subset \mathbb{R}$ is such that $\gamma>\diam(\tilde A)\geq \frac{2}{(\scr[n]+\scr[n-1])(n+k-1)}$, then it follows that $\diam(\tilde\lambda_{n,k}(\tilde A))\geq \gamma>\diam(\tilde A)$ because $\tilde A$ contains at least one full interval of monotonicity of the diameter of image being $\gamma$.
 Note that the estimates in this paragraph are independent of $\alpha$ and thus hold equally well for $\lambda_{n,k,\alpha}$ as $r_{\alpha}$ for any $\alpha$ is an isometry.
 
  Now assume that $\gamma\leq \diam (\tilde A)\leq 2\gamma$.  In  the following two paragraphs we will argue for the claims \eqref{Lcra:5} and \eqref{Lcra:3} simultaneously.  By Observation~\ref{obs:gammaapart} there are $x,y\in \tilde A$ such that $\tilde\lambda_{n,k}(y)=\max \tilde\lambda_{n,k} (\tilde I_j)$ and $\tilde\lambda_{n,k}(x)=\min \tilde\lambda_{n,k} (\tilde I_{j'})$ where $|j-j'|\leq 1$ and $\tilde A\cap \tilde I_j\neq \emptyset$,  $\tilde A\cap \tilde I_{j'}\neq \emptyset$. Note that $\tilde A$ is contained in at most three different adjacent intervals say $\tilde I_{i-1}\cup \tilde I_i\cup \tilde I_{i+1}$ and since $\diam(\tilde A)>\gamma$ it follows that $\tilde{\lambda}_{n,k}(\tilde A)$ covers at least the interval $\tilde I_{i-2}\cup \tilde I_{i-1}\cup \tilde I_i\cup \tilde I_{i+1}\cup \tilde I_{i+2}$. This means that $\tilde A\subset [\tilde\lambda_{n,k}(x),\tilde\lambda_{n,k}(y)]\subset \tilde\lambda_{n,k}(\tilde A)$, in particular $\diam(\tilde\lambda_{n,k}(\tilde A))\geq \diam(\tilde A)$.\\ 
Now, let us study $\lambda_{n,k,\alpha}(\tilde A)$ in the case when $\gamma\leq \diam (\tilde A)\leq 2\gamma$. Again say $\tilde A\subset \tilde I_{i-1}\cup \tilde I_i\cup \tilde I_{i+1}$ for some $i$.  
  Wlog assume $0\geq \alpha>-\gamma/2$.
  Denote $[x_i,y_i]:=\tilde I_i$.
  
  First assume that $d(a,x_i)<\gamma/2$. Note that we are rotating to the left and $\diam \tilde I_{i-1}=\gamma$. 
  Then, since $\diam(r_{\alpha}(\tilde A))=\diam(\tilde A)\leq 2\gamma$ it follows that $r_{\alpha}(\tilde A)\subset \tilde I_{i-1}\cup \tilde I_i\cup \tilde I_{i+1}$. Following the arguments in the previous paragraph $\tilde{\lambda}_{n,k}(r_{\alpha}(\tilde A))$  covers at least the interval $\tilde I_{i-2}\cup \tilde I_{i-1}\cup \tilde I_i\cup \tilde I_{i+1}\cup \tilde I_{i+2}$. 
   But since $|\alpha|<\gamma/2<\gamma=\diam(I_{i-1})$ and noting that $r_{-\alpha}$ applied from the left shifts the values $\tilde \lambda_{n,k}(r_{\alpha(\tilde A)})$ for at most $\gamma/2$ it follows again from the arguments in the previous paragraph that $\tilde{\lambda}_{n,k,\alpha}(\tilde{A})=r_{-\alpha}(\tilde{\lambda}_{n,k}(r_{\alpha}(\tilde A)))$  covers at least the interval $\tilde I_{i-1}\cup \tilde I_{i}\cup  \tilde I_{i+1}$ and thus $\tilde A\subset [\tilde\lambda_{n,k,\alpha}(x),\tilde\lambda_{n,k,\alpha}(y)]\subset \tilde\lambda_{n,k,\alpha}(\tilde A)$.
   
    Now assume that $d(a,x_i)\geq \gamma/2$ and $\tilde A\cap \tilde I_{i-1}\neq \emptyset$. 
  Then we have that $r_{\alpha}(\tilde A)\subset \tilde I_{i-2}\cup \tilde I_{i-1}\cup \tilde I_{i}$.
Following the arguments as above $\tilde{\lambda}_{n,k}(r_{\alpha}(\tilde A))$  covers at least the interval $\tilde I_{i-3}\cup \tilde I_{i-2}\cup \tilde I_{i-1}\cup \tilde I_{i}\cup \tilde I_{i+1}$.
But now, with analogous arguments as in the second paragraph of the proof of \eqref{Lcra:3}, we can only claim that
   $\tilde{\lambda}_{n,k,\alpha}(\tilde{A})=r_{-\alpha}(\tilde{\lambda}_{n,k}(r_{\alpha}(\tilde A)))$  covers at least the interval $\overline{B(\tilde I_{i-1}\cup \tilde I_{i},\gamma/2)}$. However, since  $d(a,x_i)\geq \gamma/2$ it follows $\tilde A\subset \overline{B(\tilde I_{i-1}\cup \tilde I_{i},\gamma/2)}$.
   Now assume that $d(a,x_i)\geq \gamma/2$ and $\tilde A\cap \tilde I_{i-1}=\emptyset$. Thus 
   we have that $r_{\alpha}(\tilde A)\subset \tilde I_{i-1}\cup \tilde I_{i}\cup \tilde I_{i+1}$. But now we can use analogous arguments as in the second paragraph of the proof of \eqref{Lcra:3}.

   That $\diam(\tilde\lambda_{n,k,\alpha}(\tilde A))\geq \diam(\tilde A)$ now follows in both of the cases from the corresponding claim for $\lambda_{n,k}$ and the fact that $r_{\alpha}$ is an isometry.

	When $\diam(\tilde A)>2\gamma$, then there are $j\leq j'$ such that $\tilde I_j\cup \ldots \cup \tilde I_{j'}\subset \tilde A$ and $\diam(\tilde I_j\cup \ldots \cup \tilde I_{j'})$ is maximal possible for the interval $\tilde I_j\cup \ldots \cup \tilde I_{j'}$ under inclusion (meaning one cannot take smaller $j$ or larger $j'$). But then clearly $\tilde A\subset [\min\tilde\lambda_{n,k}(\tilde I_j), \max \tilde\lambda_{n,k}(\tilde I_{j'})]$ (see Figure~\ref{fig:lambda}) which completes the proof for $\tilde \lambda_{n,k}$ and thus for $\lambda_{n,k}$.\\
 By the definition we have
 $B(\tilde I_j\cup \ldots \cup \tilde I_{j'},3\gamma)\subset \tilde \lambda_{n,k}(\tilde I_j\cup \ldots \cup \tilde I_{j'})$. 
Furthermore $\lambda_{n,k}(\tilde I_j\cup \ldots \cup \tilde I_{j'})\subset B(\tilde{\lambda}_{n,k}(r_{\alpha}(\tilde A)),\gamma)$,
hence $B(\tilde I_j\cup \ldots \cup \tilde I_{j'},2\gamma)\subset \tilde{\lambda}_{n,k}(r_{\alpha}(\tilde A))$.
Finally, we have 
$$
\tilde A \subset B(r_{-\alpha}(\tilde I_j\cup \ldots \cup \tilde I_{j'}),2\gamma)\subset r_{-\alpha}(\tilde{\lambda}_{n,k}(r_{\alpha}(\tilde A)))=
\tilde{\lambda}_{n,k,\alpha}(\tilde A).
$$ 
  
Moreover, we again get that $\diam(\tilde\lambda_{n,k,\alpha}(\tilde A))\geq \diam(\tilde A)$ from the analogous claim for $\lambda_{n,k}$ and the fact that $r_{\alpha}$ is an isometry.
	
	\eqref{Lcra:4} This part follows from the arguments in the last paragraph if one notes that $\eps/2=\frac{n-1}{2(n+k-1)}$ and the fact that $r_{\alpha}$ is an isometry. Namely, $|\tilde{\lambda}_{n,k,\alpha}(x)-\tilde{\lambda}_{n,k,\alpha}(y)|\geq\frac{n+1}{n+k-1}>\eps/2$ where $x$ and $y$ are as in the previous paragraph. 
	
\eqref{Lcra:6} First note that if the claim holds for $\lambda_{n,k}$ it also holds for $\lambda_{n,k,\alpha}$ for any $\alpha\in \R$. 
Namely, in intervals $\tilde I_j$ the first maximal value of $\tilde\lambda_{n,k}$ lies $\gamma$ apart for all $j\in \Z$  and it is exactly $\gamma$ greater from the maximal value of $\tilde\lambda_{n,k}(\tilde I_{j-1})$.
Furthermore, if we denote $\beta:=\frac{\gamma}{\scr[n]+\scr[n-1]}$; that is, $\beta$ is the length of the smallest interval of monotonicity, then $\tilde\lambda_{n,k}(x_i)=\max \tilde\lambda_{n,k}([x_i,x_{i+1}-\beta])$
for each $i\in \Z$, where $x_i$ are as in  Observation~\ref{obs:gammaapart}. Thus for the part when $\gamma \leq \diam(A)<2\gamma$ the conclusion follows from Observation~\ref{obs:gammaapart}.\\
Let $A\subset \mathbb{R}$ be an interval such that $\diam(A)\geq 2\gamma$, denote $A:=[a,b]$. Then there is maximal $j$ such that $\tilde I_j\subset A$.
Choose an $r\in \R$ and fix any $x\in [b,b+r]$. Then, by the repetitive structure of building blocks of the graph $\tilde\lambda_{n,k}$ (see Figure~\ref{fig:lambda}), if we take maximal non-negative integer $M$ such that $M\gamma <r+\gamma$,
$y:=x-M\gamma \in [a,b]$ and $\tilde\lambda_{n,k}(x)\leq \tilde\lambda_{n,k}(y)+M\gamma$. But this shows that $\tilde\lambda_{n,k}([a,b+r])\subset B(\tilde\lambda_{n,k}([a,b]),r+\gamma)$. Thus we have the required statement for the liftings and through the projection to $\mathbb{S}^1$ we have it for $\lambda_{n,k}$ as well.
\end{proof}

{

\begin{defn}
	A piecewise linear circle map $f\in C(\mathbb{S}^1)$ is called \emph{admissible}, if $|\tilde{f}'(t)|\geq 4$ for every $t\in \mathbb{S}^1$ for which $\tilde{f}'(t)$ exists and $f$ is leo.
\end{defn}

A more general version of the following lemma is proven in its entirety in Lemma~\ref{lem:LemMTadjusted} below.

\begin{lem}\label{lem:LemMT}
Let $f:\Ci\to \Ci$ be an admissible map. Let $\eta$ and $\delta$ be two positive real numbers. Then there is an admissible map $F:\Ci\to \Ci$ and a positive integer $n$ such that any $\omega:[0,1]\to \mathbb{S}^1$ is $(F^n,\delta)$-crooked and $\rho(F,f)<\eta$. Moreover, if $f\in C_{\lambda}(\Ci)$, such $F$ can also be chosen to be in $C_{\lambda}(\Ci)$.
\end{lem}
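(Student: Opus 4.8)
\textbf{Proof plan for Lemma~\ref{lem:LemMT}.}

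The plan is to build $F$ as a composition $F = \lambda_{m,\ell,\alpha}^{(\text{rescaled})}\circ f$ (or more precisely a conjugated/rescaled copy of one of our crooked perturbation maps pre-composed with $f$), choosing the parameters $m$ (odd), $\ell$ (the stretching parameter) large and $\alpha=0$ for this version, so that $\eps=\frac{m-1}{m+\ell-1}$ and $\gamma=\frac{1}{m+\ell-1}$ are both as small as we like while $\rho(F,f)<\eta$; the latter is guaranteed by Lemma~\ref{lem:MincUpdt}\eqref{Lcra:1} since $\lambda_{m,\ell}$ is uniformly close to the identity once $\eps/2+\gamma<\eta$. One first checks that $F$ remains admissible: $F$ is piecewise linear, its slopes are products of slopes of $f$ (each $\geq 4$ in absolute value) and slopes of $\lambda_{m,\ell}$ (each $\pm(\scr[m]+\scr[m-1])$, so $\geq 4$), hence $|\tilde F'|\geq 4$; and $F$ is leo because $f$ is leo and $\lambda_{m,\ell}$ is surjective (a short argument: any arc $J$ eventually maps under $f^j$ onto all of $\Ci$, and $\lambda_{m,\ell}$ does not destroy this — in fact by Lemma~\ref{lem:MincUpdt}\eqref{Lcra:3}–\eqref{Lcra:5}, images of arcs under $\lambda_{m,\ell}$ do not shrink and eventually grow past $\gamma$, after which they only expand). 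If $f\in C_\lambda(\Ci)$, then since $\lambda_{m,\ell}\in C_\lambda(\Ci)$ by Proposition~\ref{prop:LebPres} and $C_\lambda(\Ci)$ is closed under composition, $F\in C_\lambda(\Ci)$ as well.

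The heart of the proof is showing that some iterate $F^n$ is $\delta$-crooked, i.e.\ that \emph{every} map $\omega\colon[0,1]\to\Ci$ is $(F^n,\delta)$-crooked. The strategy, following Minc–Transue and the adaptation in \cite{CO}, is: iterate $F$ enough times that, for an arbitrary $\omega$, the image $(F^{n-1}\circ\omega)([0,1])$ becomes a ``small'' arc — specifically of $\lambda$-measure less than $\eps$ — and then apply one more copy of the crooked perturbation so that Lemma~\ref{lem:MincUpdt}\eqref{Lcra:2} makes $\omega$ be $(F^n,3\gamma)$-crooked, which suffices provided $3\gamma<\delta$. To arrange that $(F^{n-1}\circ\omega)([0,1])$ is short we argue contrapositively using the expansion estimates: if an arc $A$ has $\lambda(A)\geq\eps$ then... no — instead we want the opposite direction, so we exploit that $F$ is leo together with a ``no long arcs survive'' mechanism. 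The cleaner route is: because $F$ is leo, for the fixed $\delta$ there is a uniform $N$ such that every arc of length $\geq\eps$ maps onto $\Ci$ under $F^N$; and by a folding/crookedness bookkeeping of the building blocks $\sigma_m,\sigma_{-(m-1)}$ inside $\lambda_{m,\ell}$ (Lemma~\ref{lem:hatcrooked} and Remark~\ref{rem:crooked}), one shows that once an arc covers $\Ci$, a further bounded number of iterates of $F$ makes the $\omega$-image crooked. Combining, set $n := N + (\text{that bounded number})+1$.

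Concretely, the order of steps I would carry out: (1) fix $m$ odd and $\ell$ so large that $3\gamma<\delta$ and $\eps/2+\gamma<\eta$; define $F$ as above and verify admissibility and (if needed) measure preservation as in the previous paragraph; (2) using that $F$ is leo and $\Ci$ is compact, extract a single $N=N(F,\eps)$ with $F^N(J)=\Ci$ for every arc $J$ with $\lambda(J)\geq\eps$ (this uses a Lebesgue-number argument over a finite cover of $\Ci$ by arcs of length $\eps/2$); (3) show that if $B$ is an arc with $F^j(B)=\Ci$ for some $j$, then one can choose a subarc $\omega'$ of $\omega$ whose image under $F^{j}$ is already ``folded'' so that after one more application of the $\lambda_{m,\ell}$-factor the endpoints-condition of Definition~\ref{def:KTT} holds within $3\gamma$ — here one invokes Lemma~\ref{lem:MincUpdt}\eqref{Lcra:2} after observing that a suitable restriction of the $\omega$-image has $\lambda$-measure $<\eps$, or alternatively invokes Lemma~\ref{lem:hatcrooked} directly on the relevant linear piece; (4) set $n:=N+1$ (or $N+2$) and conclude that every $\omega$ is $(F^n,3\gamma)\subset(F^n,\delta)$-crooked. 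The main obstacle is step (3): controlling how the perturbation interacts with iterates of $f$ so that the crookedness genuinely survives composition — one must be careful that $f$ (about which we know only leo and expansion, not crookedness) does not ``unfold'' the crooked structure created by $\lambda_{m,\ell}$. This is exactly the delicate point handled in \cite{MT,CO}, and the resolution is to arrange the crooked perturbation to act \emph{last} among the relevant iterate and to use the expansion $|\tilde F'|\ge 4$ to guarantee that preimages of short arcs under the preceding iterates are short enough that the folding survives; the measure-preservation adds no new difficulty beyond citing Proposition~\ref{prop:LebPres} and closedness of $C_\lambda(\Ci)$ under composition.
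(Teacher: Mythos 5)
Your plan follows the Minc--Transue / \cite{CO} strategy, which is exactly what the paper does: the paper in fact proves Lemma~\ref{lem:LemMT} by pointing to Lemma~\ref{lem:LemMTadjusted}, a more general parameterized version (needed in Section~\ref{sec:family}), whose proof is precisely a circle version of \cite[Proposition~5]{MT}. So the overall route matches. Your treatment of admissibility, of the Lebesgue-measure claim via Proposition~\ref{prop:LebPres} and closedness of $C_\lambda(\Ci)$ under composition, and your identification of the crookedness-survival step as the delicate point are all in line with the paper.

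A few places deserve flagging. First, the composition order: the paper defines the perturbed map as $G=f\circ\lambda_{n,k,\alpha}$ -- that is, the crooked factor is applied \emph{first} (innermost), then the expanding map. You propose $F=\lambda\circ f$, i.e.\ the crooked factor last. The two conventions are convertible since $(\lambda\circ f)^n=\lambda\circ(f\circ\lambda)^{n-1}\circ f$ and $\lambda$ is near the identity, so yours can be made to work, but your stated rationale -- ``arrange the crooked perturbation to act last so that $f$ cannot unfold it'' -- is not how the argument actually resolves the difficulty. In the paper's proof of Lemma~\ref{lem:LemMTadjusted}, the decisive application of $\lambda$ occurs at the \emph{intermediate} step where $G^M(A)$ first reaches the threshold scale $\gamma$; the fold created there is then pushed forward through $\sim N$ more iterations, and the point is not that nothing later can unfold it but that the distortion of the fold under those later iterates is quantitatively bounded (via Lemma~\ref{lem:MincUpdt}~(iii)(c), Claim~4.5, and the Lipschitz bound $s$) below the target $\delta$. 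This forces $\gamma$ to be chosen much smaller than $\delta$: the paper requires roughly $\gamma<\delta s^{-N}/8$, not merely $3\gamma<\delta$. Because $N$ itself depends on $\eps$, the quantifier order here is delicate, so your step (1) (fix $m,\ell$ with $3\gamma<\delta$, then extract $N$) would need to be reworked to pick $\gamma$ \emph{after} $N$ is known. Second, your final count $n=N+1$ or $N+2$ is too optimistic: the paper establishes crookedness of $G^{N+M}$ where $M$ is the largest index with $\lambda(G^M(A))<\gamma$, and the accounting of $M$ (bounded by $N$ in the relevant case) must be carried through. These are not fatal flaws, but they are exactly where a naive version of the plan would stall, and the paper's actual proof (of the more general Lemma~\ref{lem:LemMTadjusted}) handles them with Claims~4.4 and~4.5 rather than with a Lebesgue-number argument or a one-or-two-step overshoot.
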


The following is a restatement of Lemma 3.2 from \cite{KTT} in the case $\mathbb{S}^1$.

\begin{lem}[{\cite[Lemma~ 3.2]{KTT}}]\label{lem:adm}
	For every $\eps>0$ and every leo map $f \in C(\Ci)$ there exists $F\in C(\Ci)$ such that $F$ is admissible and $\rho(F,f) < \eps$.
\end{lem}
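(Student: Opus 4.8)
\textbf{Proof proposal for Lemma~\ref{lem:adm}.}

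The plan is to pass to representatives on the interval and use a standard affine-approximation argument that makes the slopes large without changing the map by more than $\eps$. First I would fix a leo map $f\in C(\Ci)$ and its representative; since $f$ is leo, so is its representative $F\colon[0,1)\to[0,1)$. I would then choose a partition $0=t_0<t_1<\dots<t_m=1$ of $[0,1]$ fine enough that the oscillation of $F$ on each $[t_{i-1},t_i]$ is less than $\eps/2$, and in addition fine enough that each arc of monotonicity of $F$ contains at least one partition point in its interior. Define $G$ on $[0,1]$ to be the piecewise linear map that agrees with $F$ at each $t_i$ and is affine on each $[t_{i-1},t_i]$; by the oscillation bound, $\rho(G,F)<\eps/2$, hence the circle map $g$ induced by $G$ satisfies $\rho(g,f)<\eps/2$.

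The issue is that $g$ need not have all slopes of absolute value $\geq 4$: some affine pieces may be too flat. To fix this, on each affine piece $[t_{i-1},t_i]$ on which $|G'|<4$, I would replace the straight segment by a small zig-zag (a piecewise linear ``tent-like'' path) with the same endpoints $(t_{i-1},G(t_{i-1}))$ and $(t_i,G(t_i))$, constructed so that every constituent segment has slope of absolute value $\geq 4$ and so that the whole zig-zag stays within the $\eps/2$-neighbourhood (in the second coordinate) of the original segment. This is possible: subdivide $[t_{i-1},t_i]$ into sufficiently many equal subintervals and let the modified graph oscillate between two horizontal lines at distance $<\eps/2$ that both lie within $\eps/2$ of the segment's range; with enough subintervals the required slopes are attained. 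Call the resulting circle map $F$ (the admissible candidate). By construction $|\tilde F'(t)|\geq 4$ wherever the derivative exists, and $\rho(F,g)<\eps/2$, so $\rho(F,f)<\eps$.

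It remains to check that $F$ is leo. Here one uses that adding the zig-zags only \emph{increases} the images of subarcs: for every subinterval $J$, $F(J)\supseteq g(J)$ is false in general, so instead I would argue directly. Since $g$ is leo, for every nondegenerate arc $J$ there is $N$ with $g^N(J)=\Ci$; because $g$ is piecewise affine with finitely many laps, one can find a small subarc $J'\subset J$ on which some iterate of $g$ is affine and expanding, so $g^M(J')$ is an arc of length, say, $>1/2$. Replacing $g$ by $F$ on each lap only refines the lap structure and, on each original affine piece, the new map is still surjective onto the same image interval (the zig-zag has the same endpoints and is onto the interval between consecutive extreme values, which contains the original image). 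Iterating, $F^k(J)$ still eventually grows to cover an arc of definite length and then, by leo-ness transferred through this covering property, all of $\Ci$. I would make this precise by the standard observation that a piecewise affine circle map all of whose laps map onto intervals of length bounded below, and which has a subarc whose iterates grow, is leo.

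\textbf{Main obstacle.} The delicate point is the last step: guaranteeing that inserting the zig-zags preserves the leo property rather than just density of images. The safe route, which I would take in the write-up, is to quote the more general statement: Lemma~\ref{lem:LemMTadjusted} (the detailed version of Lemma~\ref{lem:LemMT}) already produces admissible approximations with controlled dynamics, so Lemma~\ref{lem:adm} can be deduced by applying it with a large $\delta$ (so the crookedness conclusion is vacuous) and keeping only the admissibility and $\eps$-closeness conclusions; alternatively one simply cites \cite[Lemma~3.2]{KTT} as stated, since the present lemma is literally its restriction to $\mathbb{S}^1$.
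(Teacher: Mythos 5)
The paper gives no proof of Lemma~\ref{lem:adm} at all: it is stated and used as a direct citation of \cite[Lemma~3.2]{KTT}, so in the end your ``safe route'' of simply quoting that result is precisely what the paper does. Your hands-on sketch, however, has the gap you yourself flag, and it is a real one. After replacing each flat affine piece by a high-slope zig-zag, the resulting map $F$ has derivative $\geq 4$, but there is no reason why $F$ should still be leo: affine interpolation of $f$ at an ad hoc fine partition does not control how the images of laps sit with respect to the partition, so you cannot propagate the leo-ness of $g$ through iterates of $F$ by comparing $F^m(J)$ to $g^m(J)$. The fix, as used in the paper's measure-preserving analogue (Lemma~\ref{lem:LeoApproxByAdm}, via Lemma~\ref{lem:Markov}), is to first approximate by a piecewise-linear \emph{Markov} leo map and then apply $m$-fold window perturbations inside each Markov piece; window perturbations preserve the Markov images exactly, so $F([t_{i-1},t_i])=g([t_{i-1},t_i])$ is a union of partition intervals, inducting to $F^m\supseteq g^m$ on partition pieces and hence leo. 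Your version, which only requires the zig-zag range to stay in an $\eps/2$-band and contain the segment range, does not supply this Markov alignment.

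The alternative ``safe route'' you propose of deducing the lemma from Lemma~\ref{lem:LemMTadjusted} does not work. That lemma is stated and proved only for the particular parameterized family $F_\beta = f_\beta\circ\lambda_{n_1,k_1,\alpha_1}\circ\cdots\circ\lambda_{n_m,k_m,\alpha_m}$ built from the specific map $f$ of Figure~\ref{fig:mapf}, and it already uses admissibility and leo-ness of $F_\beta$ as ingredients (via Claim~\ref{clm:uniqueN}); it is not a general approximation statement for arbitrary leo $f\in C(\Ci)$, and it does not \emph{produce} admissibility from scratch. The only route the paper actually takes for Lemma~\ref{lem:adm} is the citation of \cite{KTT}; if you want an explicit proof, adapt the Markov-plus-window-perturbation argument of Lemma~\ref{lem:LeoApproxByAdm} (without the measure-preservation constraint), not the argument of Lemma~\ref{lem:LemMTadjusted}.
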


We will need a small adjustment of Lemma~\ref{lem:adm} to the context of Lebesgue measure-preserving circle maps, which can be obtained with the help of the following useful result. 

   A proof of the following lemma follows the lines of the proof of Proposition 12 from \cite{BT} done for a representative of a circle map together with noting that Lebesgue measure-preserving circle maps are surjective.
   
By $\mathrm{PLM}_{\lambda\mathrm{(leo)}}$ we denote the set of circle piecewise linear leo maps that additionally satisfy the {\em Markov property}, which means there is a partition $0=a_0<a_1<\ldots<a_n=1$ such that for each $i$ the map $f_{[a_i,a_{i+1}]}$ is monotone and there are $s<t$ such that
$f([a_i,a_{i+1}])=[a_s,a_t]$.

\begin{lem}\label{lem:Markov}
	The set $\mathrm{PLM}_{\lambda\mathrm{(leo)}}$ is dense in $C_{\lambda}(\Ci)$.
\end{lem}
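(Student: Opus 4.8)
The goal is to show that $\mathrm{PLM}_{\lambda\mathrm{(leo)}}$ is dense in $C_{\lambda}(\Ci)$. As indicated in the excerpt, this should follow the proof of Proposition~12 in \cite{BT} transferred to the circle via representatives, so the plan is to reduce everything to an approximation statement for representatives $F\colon [0,1)\to[0,1)$ of maps in $C_\lambda(\Ci)$ (using Lemma~\ref{l:3}) and then to verify that the pieces we produce are (a) Lebesgue measure preserving, (b) piecewise linear, (c) leo, and (d) Markov. First I would fix $f\in C_\lambda(\Ci)$ and $\varepsilon>0$; by \cite[Lemma 2.5]{BCOT1} we may first move $f$ within $\varepsilon/2$ to a piecewise linear map in $C_\lambda(\Ci)$ (the density of piecewise linear maps in $C_\lambda(\Ci)$ is part of the Bobok--Troubetzkoy circle of results, so I would cite it rather than reprove it), so it suffices to approximate a piecewise linear $f\in C_\lambda(\Ci)$. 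Then I would pass to a representative $F$ on $[0,1)$, which by Lemma~\ref{l:3} preserves Lebesgue measure on $[0,1)$.

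\textbf{Key steps.} The main construction step is the standard ``water pouring''/combinatorial perturbation on the representative: refine the partition determined by the turning points of $F$ together with a fine uniform grid $\{j/N\}_{j=0}^{N}$, and replace $F$ on each piece of monotonicity by an affine map whose endpoint values are snapped to nearby grid points, chosen so that the slope-reciprocal balance condition \eqref{e:3} of Lemma~\ref{l:6} is preserved exactly. The key quantitative point, inherited from \cite{BT}, is that because $F$ preserves Lebesgue measure one can perform the snapping symmetrically among the preimages of each grid value so that $\sum_{x\in F^{-1}(y)} 1/|F'(x)| = 1$ continues to hold for all $y$ outside the (finite) image of the breakpoints; this is exactly the mechanism that keeps us inside $C_\lambda(\Ci)$. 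Making $N$ large forces $\rho(F,\hat F)<\varepsilon/2$ for the new representative $\hat F$, hence $\rho(f,\hat f)<\varepsilon$ on $\Ci$ after lifting back. Next I would argue the Markov property: by construction every turning value and every endpoint value of a monotone branch of $\hat F$ lies on the grid $\{j/N\}$, so taking the partition to be the grid (or its $\hat F$-refinement) shows each monotone piece maps onto a union of grid intervals, i.e.\ $\hat F([a_i,a_{i+1}])=[a_s,a_t]$. Finally, leo: if the original $f$ is not already leo I would note that $C_\lambda(\Ci)$-generic (indeed open-dense, by \cite{BCOT2}) maps are leo, so after the first approximation we may assume $f$ is leo, and then choose the perturbation small enough, and the slopes large enough (the measure-preserving condition forces branch slopes $\geq 1$ in absolute value on average, and a Markov refinement with uniformly expanding branches is leo), that the Markov matrix of $\hat F$ is primitive; primitivity of the transition matrix together with piecewise expansion gives leo. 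Alternatively, and more cleanly, I would invoke Lemma~\ref{lem:adm} (or rather its measure-preserving analogue that the present lemma is feeding into) only after establishing density of the Markov piecewise linear maps among leo maps in $C_\lambda(\Ci)$.

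\textbf{Main obstacle.} The delicate point is the simultaneous requirement of \emph{exact} Lebesgue measure preservation and the \emph{Markov} (grid-aligned) property: snapping branch endpoints to the grid to get the Markov structure generically destroys the balance \eqref{e:3}, and repairing \eqref{e:3} by adjusting slopes generically moves endpoints off the grid. The resolution, as in \cite{BT}, is to do both adjustments in a single coordinated move on the whole preimage set $F^{-1}(y)$ at once: since $\sum 1/|F'(x)|=1$ already, one redistributes the slopes among the preimages so that each new reciprocal slope is a rational with denominator compatible with $N$ and the sum stays $1$, while the monotone branches are re-laid as straight segments between consecutive grid points; the number of monotone branches over each grid interval is what must be bookkept (this is precisely the role played by the ``number of intervals of monotonicity of diameter $1/(n+k-1)$'' counting in Observation~\ref{obs:lambda2} and Proposition~\ref{prop:LebPres}, and the flip-function remark there). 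I expect the bulk of the write-up to consist of making this combinatorial redistribution explicit and checking that $N$ can be taken large enough for the uniform estimate, with leo and Markov then essentially immediate from the grid-alignment.
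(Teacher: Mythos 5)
The paper does not supply a written proof of Lemma~\ref{lem:Markov}: it simply remarks that the result follows by running the proof of Proposition~12 of \cite{BT} on a representative of a circle map, together with the observation that every map in $C_{\lambda}(\Ci)$ is surjective. Your reduction to representatives via Lemma~\ref{l:3} and your plan to reproduce the combinatorial construction of \cite{BT} in that setting is therefore exactly the route the paper intends, and your description of the core mechanism (keeping the balance condition \eqref{e:3} while aligning branch endpoints to a grid) is a reasonable reconstruction of how such a proof should go.

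That said, two points are weak or missing. First, your treatment of leo is muddled: you offer two alternative arguments (invoke the open-dense statement from \cite{BCOT2}, or argue primitivity of the Markov matrix plus ``piecewise expansion''), without committing to one, and the second one has a gap. Measure preservation only controls the \emph{reciprocal sum} of slopes, so it does not give you uniform expansion on each branch, and primitivity of the transition matrix alone does not yield leo without uniform expansion (or separate control of attracting periodic intervals). The natural way to fix this, and what fits the window-perturbation machinery the paper sets up in this very section, is to make the branch slopes large directly (an $m$-fold window perturbation with odd $m$ multiplies slopes by roughly $m$ while preserving Lebesgue measure and the Markov grid), at which point leo follows from the proof already present in Lemma~\ref{lem:LeoApproxByAdm} rather than from genericity. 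Invoking \cite{BCOT2} here also risks circularity, since that result may itself rest on a density statement of the present kind. Second, you do not record the one circle-specific observation the paper flags as essential, namely that maps in $C_{\lambda}(\Ci)$ are automatically surjective; this is what lets the interval argument of \cite{BT} transfer to the circle without changes to the covering estimates. (As a minor citation point: \cite[Lemma~2.5]{BCOT1} is the completeness of $(C_{\lambda}(\Ci),\rho)$, not density of piecewise linear maps, so a different reference is needed for your first reduction.)
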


In what follows we will also need perturbations of maps that preserve Lebesgue measure, similarly as in \cite{BT}.

For an arc $A \subset \mathbb{S}^1$, let a {\em partition} $\{A_i \subset \mathbb{S}^1: 1 \le i \le m \}$ of $A$, where $m$ is odd, be a finite collection of arcs satisfying $\cup^{m}_{i=1} A_i = A$ and $\Int(A_i) \cap \Int(A_j) = \emptyset $ when $i \ne j$. Fix $f \in C_{\lambda}(\mathbb{S}^1)$
 an arc $A \in \mathbb{S}^1$ and a partition of $A$. Let $A^c$ denote the complement of $A$ in $\mathbb{S}^1$.
 A map $h = h_{A,\{A_i\}}$ is {\em an $m$-fold window perturbation of $f$ with respect to $A$ and the partition $\{A_i\}$} if
 \begin{itemize}
     \item $h|_{A^c} = f|_{A^c}$ 
     \item for every $1 \le i \le m$ the map $h|_{A_i}$ is a linearly scaled copy of $f|_{A}$ with the orientation reversed for every second index $i$, with $h|_{A_1}$ having the same orientation as $f|_{A}$.
 \end{itemize}
 The essence of this definition is illustrated by Figure~\ref{fig:perturbations}.\\
We call an $m$-fold window perturbation of $f$ {\em regular} if all of the $A_i$'s have the same length.

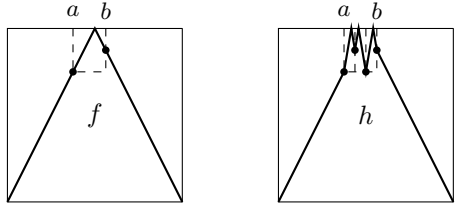
\begin{figure}[!ht]
	\centering
	\begin{tikzpicture}[scale=2.3, rotate=180]
	\draw (0,0)--(0,1)--(1,1)--(1,0)--(0,0);
	\draw[thick] (0,1)--(1/2,0)--(1,1);
	\node at (1/2,1/2) {$f$};
	\node at (7/16,-0.1) {\small $b$};
	\node at (5/8,-0.1) {\small $a$};
	\draw[dashed] (7/16,0)--(7/16,1/4)--(5/8,1/4)--(5/8,0);
	\node[circle,fill, inner sep=1] at (7/16,1/8){};
	\node[circle,fill, inner sep=1] at (5/8,1/4){};
	\end{tikzpicture}
	\hspace{1cm}
	\begin{tikzpicture}[scale=2.3, rotate=180]
	\draw (0,0)--(0,1)--(1,1)--(1,0)--(0,0);
	\draw[thick] (0,1)--(7/16,1/8)--(14/32+1/48,0)--(1/2,1/4)--(1/2+1/24,0)--(9/16,1/8)--(9/16+1/48,0)--(5/8,1/4)--(1,1);
	\draw[dashed] (1/2,0)--(1/2,1/4);
	\draw[dashed] (9/16,1/4)--(9/16,0);
	\node at (1/2,1/2) {$h$};
	\node at (7/16,-0.1) {\small $b$};
	\node at (5/8,-0.1) {\small $a$};
	\draw[dashed] (7/16,0)--(7/16,1/4)--(5/8,1/4)--(5/8,0);
	\node[circle,fill, inner sep=1] at (1/2,1/4){};
	\node[circle,fill, inner sep=1] at (9/16,1/8){};
	\node[circle,fill, inner sep=1] at (7/16,1/8){};
	\node[circle,fill, inner sep=1] at (5/8,1/4){};
	\end{tikzpicture}
	\caption{For  $f\in C_{\lambda}(\mathbb{S}^1)$ and $a,b\in I$ shown on the left picture, we show on the right picture
 the regular $3$-fold window perturbation of $f$.}\label{fig:perturbations}
\end{figure}
For more detail on the window perturbations we refer the reader to \cite{BT}.

\begin{lem}\label{lem:LeoApproxByAdm}
	For every $\eps>0$ and every leo map $f \in C_\lambda(\Ci)$ there exists $F\in C_\lambda(\Ci)$ such that $F$ is admissible and $\rho(F,f) < \eps$.
\end{lem}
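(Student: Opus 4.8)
The plan is to combine the non-measure-preserving approximation result of Lemma~\ref{lem:adm} with the density of Markov maps (Lemma~\ref{lem:Markov}) and window perturbations in the style of \cite{BT}, which preserve Lebesgue measure. The key point is that Lemma~\ref{lem:adm} gives an admissible $G$ close to $f$ but possibly outside $C_\lambda(\Ci)$; we cannot use it as a black box. Instead I would first replace $f$ by a nearby map in $\mathrm{PLM}_{\lambda(\mathrm{leo})}$ using Lemma~\ref{lem:Markov}, so we may as well assume $f$ itself is piecewise linear, leo, Lebesgue measure-preserving, and Markov. The issue is then purely that the slopes of $f$ may be small in absolute value (some $|\tilde f'|<4$). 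The goal is to increase all slopes past $4$ without leaving $C_\lambda(\Ci)$ and without moving $f$ by more than $\eps$ in the uniform metric, while keeping the leo property.

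\textbf{Main steps.} First, fix $\eps>0$ and use Lemma~\ref{lem:Markov} to pick $g\in \mathrm{PLM}_{\lambda(\mathrm{leo})}$ with $\rho(g,f)<\eps/2$. Second, subdivide the Markov partition of $g$ so finely that each partition arc $A_i$ has diameter less than some small $\eta$ depending on $\eps$; on each such arc $g$ is affine and monotone. Third, on each $A_i$ replace the single affine branch of $g$ by a regular $m_i$-fold window perturbation (with $m_i$ odd and large) of the restriction $g|_{A_i}$ viewed as a self-map pattern — that is, refine each affine piece into $m_i$ affine pieces each of which sweeps the same image arc $g(A_i)$, alternately increasing and decreasing. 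By Lemma~\ref{l:6} (the slope-reciprocal condition) and the construction of window perturbations as in \cite{BT}, the resulting map $F$ still preserves Lebesgue measure, since replacing one branch covering $g(A_i)$ with slope $s$ by $m_i$ branches each covering $g(A_i)$ forces each new slope to have absolute value $m_i|s|$, and the reciprocal sum over preimages of any generic point is unchanged. Fourth, choosing each $m_i$ large enough guarantees $|\tilde F'|\ge 4$ everywhere it is defined. Fifth, since each new branch has image $g(A_i)$ and $A_i$ has diameter $<\eta$, we get $\rho(F,g)\le \mathrm{diam}(g(A_i)) < \eps/2$ provided the partition is fine enough that images are small too (using uniform continuity of $g$, or refining further); hence $\rho(F,f)<\eps$. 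Sixth, $F$ remains leo: window perturbations only increase the images of small arcs, so if $g$ was leo then $F$ is leo as well (a short argument: any arc $J$ under $F$ expands at least as fast as under $g$ on each Markov piece because slopes only increased and images of pieces were preserved, so some iterate of $F$ still covers $\Ci$). This $F$ is then admissible and in $C_\lambda(\Ci)$, as required.

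\textbf{Main obstacle.} The delicate part is ensuring simultaneously that (i) $F\in C_\lambda(\Ci)$, (ii) all slopes reach magnitude $\ge 4$, and (iii) $\rho(F,f)<\eps$. Items (ii) and (iii) pull in opposite directions in the naive approach — steepening a branch by "tilting" it would move its graph far from $g$ — which is precisely why the window-perturbation mechanism is the right tool: it steepens by \emph{folding} rather than tilting, keeping the graph inside the thin horizontal strip $A_i\times g(A_i)$ while multiplying the slope by the fold count, and folding is exactly measure-neutral by the reciprocal-slope criterion of Lemma~\ref{l:6}. The only genuinely careful estimate is the bookkeeping: one must choose the refinement of the Markov partition fine enough that \emph{both} each $A_i$ and each image $g(A_i)$ have diameter below the target tolerance, then choose the fold counts $m_i$ afterwards (large enough for the slope bound), and verify that the leo property survives — this last is routine given that $g$ is Markov and leo and $F$ only has steeper branches over the same transition structure. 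I expect the preservation of leo and the clean verification of the Markov-refined window perturbation lying in $C_\lambda(\Ci)$ to be where most of the written proof's attention goes; everything else is standard.
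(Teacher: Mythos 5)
Your proposal matches the paper's proof essentially step for step: approximate by a map in $\mathrm{PLM}_{\lambda(\mathrm{leo})}$ via Lemma~\ref{lem:Markov}, refine the Markov partition until both each piece and its image are small, apply regular odd-fold window perturbations (which stay in $C_\lambda(\Ci)$ and multiply slopes by the fold count) to push all slopes past $4$, bound $\rho(F,g)$ by the image diameters, and note that because each window perturbation preserves the image of its Markov piece the leo transition structure is inherited by $F$. The only detail you leave implicit is \emph{how} to refine a Markov partition while keeping it Markov — the paper does this by inserting points of periodic orbits, which are dense since $g$ is leo — but this is a routine point and does not change the argument.
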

\begin{proof}
By Lemma~\ref{lem:Markov} piecewise linear and Markov leo maps are dense in $C_{\lambda}(\Ci)$, so let us start with such a map $g$ with $\rho(g,f) < \eps/2$.
Let us choose a Markov partition $\mathcal{M}:=\{a_0,a_1,\ldots, a_n, a_{n+1}\}\subset \mathbb{S}^1$ where  $0=\bar a_{n+1}=\bar a_0<\bar a_1<\ldots<\bar a_n$ for points $\{\bar a_0,\ldots, \bar a_n, \bar a_{n+1}\}\subset [0,1)$ associated to points from $\mathcal{M}$ and a representative $G:[0,1)\to [0,1)$ associated to $g$. Periodic points are dense for a leo circle map, so including points from periodic orbits as points in the partition, we may also require that $|a_{i+1}-a_i|<\delta$ for any fixed $\delta>0$.
In particular, we have that $g$ is monotone on each arc $A\subset \mathbb{S}^1$ with endpoints $a_i$ and $a_{i+1}$, $\diam(g(A))<\eps/2$ and $g(A)=A'$ where $A'$ has endpoints $a_k$ and $a_{k'}$ for some indices $k<k'$.
Now, following the construction in Lemma~5 of \cite{BT} we construct a new map $F$ by replacing each $g|_{A}$ by its regular $m$-fold window perturbation, with odd
and sufficiently large $m$. This way $F$ is admissible with $\rho(F,g)\leq \diam(g(A))<\eps/2$.
Window perturbations are invariant for $C_\lambda(\Ci)$, hence $F\in C_\lambda(\Ci)$.
Clearly also $g(A)=F(A)=B$. Therefore, for each $i$ there is $n\in \N$ such that $F^n(A)=\mathbb{S}^1$.
But then, if we fix any open set $U\subset \mathbb{S}^1$, then since slope on intervals of monotonicity of $F$ is at least $4$, there is $M\in \mathbb{N}$ such that $F^M(U)$ contains three consecutive intervals of monotonicity, and therefore 
$F^{M+1}(U)\supset A$.
\end{proof}

The following lemma is a restatement of Lemma 2.4 from \cite{KTT} in the special case of $\mathbb{S}^1$.
\begin{lem}[{\cite[Proposition~2]{MT}}]\label{lem:crooked}
	Let $f,F\in C(\Ci)$ be two maps so that
	$\rho(f,F)<\varepsilon$. If $f$ is $\delta$-crooked, then $F$ is
	$(\delta+2\varepsilon)$-crooked.
\end{lem}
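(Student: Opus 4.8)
The plan is to verify the definition of $(\delta+2\varepsilon)$-crookedness for $F$ directly, simply reusing the crookedness witnesses that $f$ already provides. Fix an arbitrary map $\omega\colon[0,1]\to\mathbb{S}^1$. Since $f$ is $\delta$-crooked, $\omega$ is $(f,\delta)$-crooked, so by Definition~\ref{def:KTT} there exist $s,t$ with $0<s\le t<1$ such that $\rho(f(\omega(s)),f(\omega(1)))\le\delta$ and $\rho(f(\omega(t)),f(\omega(0)))\le\delta$. These $s,t$ depend on $\omega$ (and on $f$), but that is harmless: I will show the \emph{same} pair witnesses $(F,\delta+2\varepsilon)$-crookedness of $\omega$.

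The hypothesis $\rho(f,F)<\varepsilon$ means precisely that $\rho(f(x),F(x))<\varepsilon$ for every $x\in\mathbb{S}^1$; in particular this holds at the four points $\omega(s),\omega(1),\omega(t),\omega(0)$. Hence, by the triangle inequality on $(\mathbb{S}^1,\rho)$,
\[
\rho(F(\omega(s)),F(\omega(1)))\le \varepsilon + \rho(f(\omega(s)),f(\omega(1))) + \varepsilon < \delta + 2\varepsilon,
\]
and the analogous three-term estimate at $\omega(t),\omega(0)$ gives $\rho(F(\omega(t)),F(\omega(0)))<\delta+2\varepsilon$. Thus $\omega$ is $(F,\delta+2\varepsilon)$-crooked, and since $\omega$ was arbitrary, $F$ is $(\delta+2\varepsilon)$-crooked.

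There is really no substantial obstacle here: the content is entirely the triangle inequality, exploiting the fact that the crookedness witnesses are not required to be canonical and so transfer verbatim from $f$ to $F$ at the cost of two extra $\varepsilon$-terms. The only minor point to state carefully is the compatibility of the two uses of the symbol $\rho$ — the sup-metric on $C(\mathbb{S}^1)$ in the hypothesis versus the arc-length metric on $\mathbb{S}^1$ in the definition of crookedness — but by definition the former controls the latter pointwise, which is exactly what the displayed estimate uses. One could equivalently run the identical argument through the lifting formulation of Definition~\ref{def:crooked}, the three-term estimate being the same up to the $\pmod 1$ bookkeeping, in view of the Remark following that definition.
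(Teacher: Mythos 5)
Your argument is correct, and it is precisely the standard one. The paper does not supply its own proof of this lemma—it is stated as a restatement/citation of \cite[Proposition 2]{MT} (equivalently \cite[Lemma 2.4]{KTT}), whose proof is exactly the triangle-inequality transfer of witnesses you give—so there is nothing different to compare against; the only small point worth making explicit, the overloading of $\rho$ for both the sup-metric on $C(\Ci)$ and the arc-length metric on $\Ci$, you have already flagged and handled correctly.
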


Now we are ready to prove the main theorem of this section.

\begin{proof}[Proof of Theorem \ref{thm:UniLimPresLeb}]
For any $k\geq 1$ let the set $A_k\subset C_\lambda(\Ci)$ be contained in the set of maps $\mathcal{A}$ such that for every $f\in \mathcal{A}$ it holds $f^n$ is $(1/k-\delta)$-crooked for some $n$ and some sufficiently small $\delta>0$. 
First observe that $A_k$ is dense in $C_\lambda(\Ci)$. Namely,  by Lemma~\ref{lem:Markov} it holds that piecewise linear leo Markov maps are dense in $C_{\lambda}(\Ci)$.
If we start with such a map $g$ then first applying Lemma~\ref{lem:LeoApproxByAdm} and next Lemma~\ref{lem:LemMT} we modify $g$ to a map $f\in A_k$
using an arbitrarily small perturbation. But if $f\in A_k$ and $n,\delta$ are constants from the definition of $A_k$, then by Lemma~\ref{lem:crooked} we have $B(f,\delta/4)\subset A_k$.
This shows that $A_k$ contains an open dense set. But then the set
$$
\mathcal{S}:=\bigcap_{k=1}^\infty A_k
$$
is a dense $G_\delta$ and clearly each element $\mathcal{S}\subset \mathcal{T}$ so $\mathcal{T}$ satisfies the conclusion of the theorem. 
\end{proof}

\section{Family of pseudo-circles with continuously varying prime ends rotation numbers}\label{sec:family}

 \begin{figure}[!ht]
 	\centering
 	\begin{tikzpicture}[scale=5]
 \draw(0,1)--(0,0)--(1,0);
 \draw[dashed](1,0)--(1,1)--(0,1);
 \draw[thick](0,1/2)--(3/130,2/10)--(4/130,3/10)--(5/130,2/10)--(6/130,3/10)--(7/130,2/10)--(8/130,3/10)--(9/130,2/10)--(1/10,6/10)--(1/10+3/130,1/10+2/10)--(1/10+4/130,1/10+3/10)--(1/10+5/130,1/10+2/10)--(1/10+6/130,1/10+3/10)--(1/10+7/130,1/10+2/10)--(1/10+8/130,1/10+3/10)--(1/10+9/130,1/10+2/10)--(1/10+1/10,1/10+6/10)--(2/10+3/130,2/10+2/10)--(2/10+4/130,2/10+3/10)--(2/10+5/130,2/10+2/10)--(2/10+6/130,2/10+3/10)--(2/10+7/130,2/10+2/10)--(2/10+8/130,2/10+3/10)--(2/10+9/130,2/10+2/10)--(2/10+1/10,2/10+6/10)   --(3/10+3/130,3/10+2/10)--(3/10+4/130,3/10+3/10)--(3/10+5/130,3/10+2/10)--(3/10+6/130,3/10+3/10)--(3/10+7/130,3/10+2/10)--(3/10+8/130,3/10+3/10)--(3/10+9/130,3/10+2/10)--(3/10+1/10,3/10+6/10)--(4/10+3/130,4/10+2/10)--(4/10+4/130,4/10+3/10)--(4/10+5/130,4/10+2/10)--(4/10+6/130,4/10+3/10)--(4/10+7/130,4/10+2/10)--(4/10+8/130,4/10+3/10)--(4/10+9/130,4/10+2/10)--(4/10+1/10,4/10+6/10)--(5/10+3/130,5/10+2/10)--(5/10+4/130,5/10+3/10)--(5/10+5/130,5/10+2/10)--(5/10+6/130,5/10+3/10)--(5/10+7/130,5/10+2/10)--(5/10+8/130,5/10+3/10)--(5/10+9/130,5/10+2/10)--(5/10+1/10-1/130,1);
 \draw[thick] (8/10-3/130,0)--(8/10,1/2-2/10)--(8/10+3/130,2/10-2/10)--(8/10+4/130,3/10-2/10)--(8/10+5/130,2/10-2/10)--(8/10+6/130,3/10-2/10)--(8/10+7/130,2/10-2/10)--(8/10+8/130,3/10-2/10)--(8/10+9/130,2/10-2/10)--(9/10,1/2-1/10)--(9/10+3/130,2/10-1/10)--(9/10+4/130,3/10-1/10)--(9/10+5/130,2/10-1/10)--(9/10+6/130,3/10-1/10)--(9/10+7/130,2/10-1/10)--(9/10+8/130,3/10-1/10)--(9/10+9/130,2/10-1/10)--(1,6/10-1/10);
 \draw[thick] (6/10+1/130,1)--(6/10+3/130,6/10+2/10)--(6/10+4/130,6/10+3/10)--(6/10+5/130,6/10+2/10)--(6/10+6/130,6/10+3/10)--(6/10+7/130,6/10+2/10)--(6/10+8/130,6/10+3/10)--(6/10+9/130,6/10+2/10)--(6/10+1/10-2/130,1);
  \draw[thick] (7/10+2/130,1)--(7/10+3/130,7/10+2/10)--(7/10+4/130,7/10+3/10)--(7/10+5/130,7/10+2/10)--(7/10+6/130,7/10+3/10)--(7/10+7/130,7/10+2/10)--(7/10+8/130,7/10+3/10)--(7/10+9/130,7/10+2/10)--(7/10+1/10-3/130,1);
  \draw[thick] (6/10-1/130,0)--(6/10,1/10)--(6/10+1/130,0);
   \draw[thick] (7/10-2/130,0)--(7/10,2/10)--(7/10+2/130,0);
   \draw[dotted,thick] (1/2,0)--(1,1/2);
 \draw[dashed] (0,1/10)--(1,1/10);
  \draw[dashed] (0,2/10)--(1,2/10);
   \draw[dashed] (0,3/10)--(1,3/10);
    \draw[dashed] (0,4/10)--(1,4/10);
     \draw[dashed] (0,5/10)--(1,5/10);
      \draw[dashed] (0,6/10)--(1,6/10);
       \draw[dashed] (0,7/10)--(1,7/10);
        \draw[dashed] (0,8/10)--(1,8/10);
         \draw[dashed] (0,9/10)--(1,9/10);
         
          \draw[dashed] (1/10,0)--(1/10,1);
          \draw[dashed] (2/10,0)--(2/10,1);
          \draw[dashed] (3/10,0)--(3/10,1);
          \draw[dashed] (4/10,0)--(4/10,1);
          \draw[dashed] (5/10,0)--(5/10,1);
          \draw[dashed] (6/10,0)--(6/10,1);
          \draw[dashed] (7/10,0)--(7/10,1);
          \draw[dashed] (8/10,0)--(8/10,1);
          \draw[dashed] (9/10,0)--(9/10,1);
          \draw[dotted,thick] (0,0)--(1,1);
          \draw[dotted,thick] (0,1/2)--(1/2,1);
          \draw[thick] (0,0)--(0.1,0);
          \node at (0.05,-0.05) {$\tilde\gamma$};
 \end{tikzpicture} 
  
\caption{A representative of the circle map $f$ defined in the beginning of Section~\ref{sec:family}. This map is the starting point of the construction of parameterized family of pseudo-circle attractors.} \label{fig:mapf}
 \end{figure}
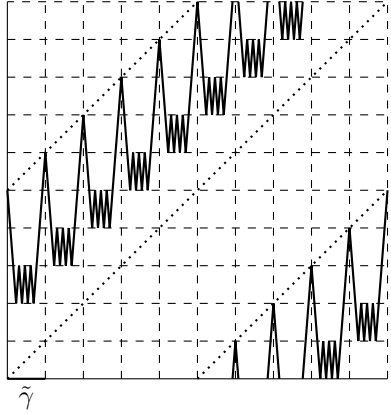
 First we show an auxiliary lemma that will be used later in this section.
 
 \begin{lem}
Fix an $\eps>0$.
Each $\omega:[0,1]\to\mathbb{S}^1$ with $\diam(\omega([0,1]))<\eps$ is $(f,\gamma)$-crooked, where $f\colon \mathbb{S}^1 \to  \mathbb{S}^1$ if and only if each $\omega$ is $(r_\alpha\circ f\circ r_{\alpha'},\gamma)$-crooked for every $\alpha,\alpha'\in \R$.
\end{lem}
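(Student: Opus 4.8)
The plan is to reduce the statement to a direct unravelling of Definition~\ref{def:KTT}, exploiting that rotations $r_\alpha$ are isometries of $\mathbb{S}^1$ with the arc-length metric $\rho$ and therefore do not affect any of the $\rho$-inequalities appearing in the definition of $(f,\gamma)$-crookedness. First I would fix $\varepsilon>0$, a map $\omega\colon[0,1]\to\mathbb{S}^1$ with $\diam(\omega([0,1]))<\varepsilon$, and real numbers $\alpha,\alpha'$. The key observation is that $(r_\alpha\circ f\circ r_{\alpha'})\circ\omega = r_\alpha\circ f\circ(r_{\alpha'}\circ\omega) = r_\alpha\circ\bigl(f\circ\omega'\bigr)$, where $\omega' := r_{\alpha'}\circ\omega\colon[0,1]\to\mathbb{S}^1$. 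Since $r_{\alpha'}$ is an isometry, $\diam(\omega'([0,1])) = \diam(\omega([0,1])) < \varepsilon$, so $\omega'$ is an admissible test path of the same diameter; and since $\omega$ and $\omega'$ have the same domain $[0,1]$, the candidate parameters $s,t$ with $0<s\le t<1$ are shared between the two.

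Next I would verify the two defining inequalities. Suppose $\omega'$ is $(f,\gamma)$-crooked, witnessed by $s,t\in(0,1)$ with $s\le t$, so that $\rho(f\circ\omega'(s),f\circ\omega'(1))\le\gamma$ and $\rho(f\circ\omega'(t),f\circ\omega'(0))\le\gamma$. Applying the isometry $r_\alpha$ to both arguments in each inequality and using $\rho(r_\alpha(x),r_\alpha(y))=\rho(x,y)$, we obtain $\rho\bigl((r_\alpha\circ f\circ r_{\alpha'}\circ\omega)(s),(r_\alpha\circ f\circ r_{\alpha'}\circ\omega)(1)\bigr)\le\gamma$ and likewise with $t$ and $0$; that is, the \emph{same} $s,t$ witness that $\omega$ is $(r_\alpha\circ f\circ r_{\alpha'},\gamma)$-crooked. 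This gives one implication. For the converse, note that $f = r_{-\alpha}\circ(r_\alpha\circ f\circ r_{\alpha'})\circ r_{-\alpha'}$, so the same argument applied with the roles of $f$ and $r_\alpha\circ f\circ r_{\alpha'}$ interchanged (and $\alpha,\alpha'$ replaced by $-\alpha,-\alpha'$) yields the reverse direction. To get the statement literally as phrased — ``each $\omega$'' on one side matched against ``each $\omega$'' on the other — I would simply observe that as $\omega$ ranges over all maps with $\diam(\omega([0,1]))<\varepsilon$, so does $\omega'=r_{\alpha'}\circ\omega$, since $r_{\alpha'}$ is a bijection preserving diameters; hence the universally quantified statements on the two sides are equivalent.

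I do not expect a genuine obstacle here; the only point requiring a small amount of care is bookkeeping the composition order, namely that the inner rotation $r_{\alpha'}$ reparametrises the test path $\omega$ while the outer rotation $r_\alpha$ post-composes and so acts on both arguments of every $\rho$-inequality simultaneously — both operations are harmless precisely because rotations are isometries and bijections. One should also remark that the diameter hypothesis $\diam(\omega([0,1]))<\varepsilon$ plays no active role in the argument beyond being preserved under $r_{\alpha'}$; it is carried along only so that the conclusion is stated for exactly the same class of paths on both sides. I would write the proof in two or three lines, emphasising the identity $(r_\alpha\circ f\circ r_{\alpha'})\circ\omega = r_\alpha\circ f\circ(r_{\alpha'}\circ\omega)$ and the isometry property, and leave the symmetric converse to the reader or dispatch it in one sentence.
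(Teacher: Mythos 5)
Your argument is correct and follows exactly the same route as the paper's proof: both hinge on the observation that $r_{\alpha'}$ is a diameter-preserving bijection of test paths (so $\omega\mapsto r_{\alpha'}\circ\omega$ preserves the hypothesis $\diam(\omega([0,1]))<\eps$), and that the outer rotation $r_\alpha$ is an isometry and hence leaves the $\rho$-inequalities in the crookedness definition unchanged. If anything, you are slightly more explicit than the paper about why the universal quantifier over $\omega$ is needed for the inner rotation step, which is a welcome clarification.
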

\begin{proof}
Note that $\diam(\omega([0,1]))=\diam(r_{\alpha'}(\omega([0,1])))$
so provided $\omega$ is $(f,\gamma)$-crooked it follows $\omega$ is also $(f\circ  r_{\alpha'},\gamma)$-crooked and so it is $(r_\alpha\circ f\circ r_{\alpha'},\gamma)$-crooked since $r_\alpha$ is an isometry. The converse is obtained by putting $r_\alpha\circ f \circ r_{\alpha'}$ in place of $f$ and using inverse rotations $r_{-\alpha}$, $r_{-\alpha'}$.
\end{proof}

Let $f:\Ci\to \Ci$ be a circle map of degree one as on Figure~\ref{fig:mapf} (we parametrize the circle with $[0,1)$) and let $\{f_{\beta}:=r_{\beta}\circ f\}_{\beta\in\R}$. Let $\tilde{\gamma}:=1/5$; it is the diameter of "one block of the graph of function $f$" and let $\tilde{s}:=13$ denote the absolute value of the constant slope of $f$ (wherever defined); it is the same as the slope of $f_{\beta}$. 

\begin{lem}\label{lem:fbetaleo}
Map $f_{\beta}$ is leo for every $\beta\in\R$. Furthermore, $f_{\beta}$ is admissible for every $\beta\in \R$.
\end{lem}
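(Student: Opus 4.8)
The statement has two parts: that $f_\beta$ is locally eventually onto for every $\beta$, and that $f_\beta$ is admissible for every $\beta$. Recall that admissibility requires two things: that $|\widetilde{f_\beta}'(t)|\geq 4$ wherever the derivative exists, and that $f_\beta$ is leo. Since $f_\beta = r_\beta\circ f$ is piecewise linear with the same constant slope $\widetilde{s}=13$ as $f$ (rotation does not change slopes), the derivative condition $13\geq 4$ is immediate, so the whole lemma reduces to establishing that $f_\beta$ is leo for every $\beta\in\R$.

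\textbf{Proving the leo property.} First I would record that $f$ itself has constant slope $13$ in absolute value and, reading off Figure~\ref{fig:mapf}, that its graph consists of ``blocks'' of horizontal diameter $\widetilde\gamma=1/5$; in particular, for \emph{any} nondegenerate arc $J\subset\mathbb{S}^1$, as soon as $f^m(J)$ has diameter at least (say) $3\widetilde\gamma$ — which happens after finitely many iterates because the slope is $13>1$ and diameters grow by a factor $13$ until the image wraps around — the image $f^m(J)$ must contain a full block, and one more application of $f_\beta$ (using the fact that $f$ maps each block onto an arc of diameter $\geq$ some fixed constant, in fact onto a long arc covering essentially the whole circle as visible from the picture) gives $f^{m+1}_\beta(J)=\mathbb{S}^1$. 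The key observation that makes this work uniformly in $\beta$ is that the expansion estimate ``$\diam f_\beta(J)\geq \min\{13\,\diam J,\ c_0\}$ for arcs not yet wrapping around'' does not depend on $\beta$, because precomposition/postcomposition with an isometry of $\mathbb{S}^1$ does not change the length of images; only the \emph{position} of $f_\beta^m(J)$ on the circle depends on $\beta$, not its length, and ``containing a full block of $f$'' only requires sufficient length once we are past the wrap-around threshold. I would phrase this as: there is $N$ (independent of $\beta$ and of $J$, depending only on $\diam J$) with $\diam f_\beta^N(J)> 2\widetilde\gamma$, hence $f_\beta^N(J)$ contains an interval of monotonicity of $f$ on which $f$ is onto a large arc, and then $f_\beta^{N+1}(J)=\mathbb{S}^1$.

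\textbf{Alternative cleaner route.} Actually a slicker argument avoids the geometry of blocks: since $|\widetilde{f_\beta}'|=13\geq 4$ everywhere, for any open arc $J$ there is $M$ with $f_\beta^M(J)$ containing three consecutive intervals of monotonicity of $f_\beta$ (this is exactly the argument used at the end of the proof of Lemma~\ref{lem:LeoApproxByAdm}: while the image stays an arc, its diameter is multiplied by at least $4$, so eventually it is longer than the union of any two adjacent monotonicity intervals, forcing it to contain three consecutive ones). But whenever $f_\beta^M(J)$ contains three consecutive intervals of monotonicity of $f_\beta=r_\beta\circ f$, it contains two consecutive intervals of monotonicity of $f$ together with a common endpoint, and the image under $f$ of two adjacent monotonicity intervals of $f$ of the type appearing in Figure~\ref{fig:mapf} is an arc of diameter strictly exceeding one block $\widetilde\gamma=1/5$; iterating this growth-by-a-block a bounded number of times yields an image equal to all of $\mathbb{S}^1$. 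Since none of these steps uses $\beta$ except through the position (never the size) of intermediate images, $f_\beta$ is leo for all $\beta$, and therefore admissible for all $\beta$.

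\textbf{Main obstacle.} The only delicate point is making the ``once the image is long enough it contains a full block and then covers $\mathbb{S}^1$'' step genuinely uniform in $\beta$ and correctly bookkept: one must be careful that after the image wraps around the circle, ``diameter'' should be replaced by ``the image is the whole circle or contains a specific long sub-arc,'' and that the block structure of $f$ (not $f_\beta$) is what guarantees full coverage, since the monotonicity intervals of $f_\beta$ and $f$ coincide as subsets of $\mathbb{S}^1$. I expect this to be routine to verify directly from Figure~\ref{fig:mapf} and the constant slope $13$, following verbatim the pattern already used in the proof of Lemma~\ref{lem:LeoApproxByAdm}.
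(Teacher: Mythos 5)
Your ``alternative cleaner route'' is essentially the paper's argument: the paper uses the uniform slope $13$ to grow an arbitrary arc until $f_\beta^N(J)$ covers three critical points of $f_\beta$, observes that then $\diam f_\beta^{N+1}(J)\geq\widetilde\gamma$ and $\diam f_\beta^{N+2}(J)\geq 3\widetilde\gamma$, and then shows inductively that the diameter grows by at least $\widetilde\gamma$ per iterate, so that $f_\beta^{N+8}(J)=\Ci$; admissibility is the trivial check $\widetilde s=13>4$. You correctly identify the key point that all of this is uniform in $\beta$ because $r_\beta$ is an isometry and hence does not affect lengths of arcs or the slope.

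However, your first route has a flaw you should be aware of: you assert that $f$ maps each block onto ``a long arc covering essentially the whole circle,'' so that one additional iterate after the arc contains a block already produces $\Ci$. That does not match Figure~\ref{fig:mapf}: a single block of $f$ has image of vertical diameter only about $3\widetilde\gamma$ to $4\widetilde\gamma$, far from all of $\Ci$, which is exactly why the paper needs eight further iterates and the incremental growth-by-$\widetilde\gamma$ estimate. So the step ``contains a full block $\Rightarrow$ next image is $\Ci$'' is false as stated; the fix is precisely what you do in the alternative route (iterate growth-by-a-block a bounded number of times), so the overall proposal is sound once the first route is discarded in favour of the second.
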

\begin{proof}
Say $J\subset \Ci$ is an open arc such that $\diam(J)<\tilde{\gamma}$. Since $\tilde{s}=13$ there exists $N\in \mathbb{N}$ such that $\diam(f^{N}_{\beta}(J))$ covers three critical points of $f_{\beta}$. But then $\diam(f^{N+1}_{\beta}(J))\geq \tilde{\gamma}$ and $\diam(f^{N+2}_{\beta}(J))\geq 3\tilde{\gamma}$. 
Continuing inductively, for every $i\geq 2$ $\diam(f^{N+i}_{\beta}(J))\geq \min\{1,(i+2)\tilde{\gamma}\}$. Since $\tilde \gamma=1/10$ it therefore follows that $\diam(f^{N+8}_{\beta}(J))=1$ in the worst case scenario (we get $8$ since $f_{\beta}^{N+1}(J)$ covers at least $3\tilde{\gamma}$ (see Figure~\ref{fig:mapf}) and thus we inductively need at least $7$ more iterates to cover whole $\mathbb{S}^1$). That finishes the proof that $f_{\beta}$ is leo for all $\beta\in\R$. The furthermore claim follows from the additional fact that $\tilde s>4$ for all $\beta$.
\end{proof}

The proof of the following lemma uses a strategy of the proof of Lemma from \cite{MT} but in a more general setting from several aspects. First, instead of $f$ we have a composition of several maps $F_{\beta}$ and instead of a sole perturbation $g$ we also have a composition of several perturbations $\lambda_{n_1,k_1,\alpha_1}\circ \ldots \circ \lambda_{n_m,k_m,\alpha_m}$. Furthermore, we are proving the following lemma for $\mathbb{S}^1$ instead of the unit interval $[0,1]$.
An alternative approach to prove the following lemma would be to take a general strategy as in \cite{KTT}, however their construction requires different constants depending on the maps in the construction as we need later in our setting and thus our perturbations $\lambda_{n,k}$ do not fit in their requirements. 

\begin{lem}\label{lem:LemMTadjusted}
Fix sequences of odd positive integers $n_1,\ldots, n_m\geq 1$, positive integers $k_1,\ldots, k_m\geq 1$ and $\alpha_1,\alpha_2,\ldots ,\alpha_m\in (-1/2,1/2)$ such that $|\alpha_i|<\frac{1}{2(k_{i}+n_{i}-1)}$ for all $i\in \{1,\ldots,m\}$ and let $\eta$ and $\delta$ be two positive real numbers fixed for the whole parameterized family  of Lebesgue measure-preserving maps $\{F_{\beta}\}_{\beta\in\R}$ where $F_{\beta}:=f_{\beta}\circ \lambda_{n_1,k_1,\alpha_1}\circ\ldots \circ \lambda_{n_m,k_m,\alpha_m}$. 
Then there are even $k_{m+1}\geq 1$, odd $n_{m+1}\geq 1$ and $N\geq 1$ such that for every $|\alpha|<\frac{1}{2(k_{m+1}+n_{m+1}-1)}$ and every $\beta\in \R$
if we denote $G_{\beta,\alpha, m+1}:=F_{\beta}\circ \lambda_{n_{m+1},k_{m+1},\alpha}$ then every $\omega:[0,1]\to\Ci$ is $(G^N_{\beta,\alpha,m+1},\delta)$-crooked and $\rho(G_{\beta,\alpha,m+1},F_{\beta})<\eta$. Furthermore, each $G_{\beta,\alpha,m+1}\in C_{\lambda}(\Ci)$ is admissible.
\end{lem}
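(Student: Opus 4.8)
The plan is to adapt the strategy of Proposition~2/Lemma from \cite{MT} to the present setting, where the ``base'' map is a composition $F_\beta = f_\beta\circ\lambda_{n_1,k_1,\alpha_1}\circ\cdots\circ\lambda_{n_m,k_m,\alpha_m}$ and the new perturbation $\lambda_{n_{m+1},k_{m+1},\alpha}$ is inserted on the right. First I would fix the target crookedness $\delta$ and the allowed perturbation size $\eta$, and choose $n_{m+1}$ odd and large enough so that $3\gamma_{m+1}<\delta$ and so that $\varepsilon_{m+1}=\tfrac{n_{m+1}-1}{n_{m+1}+k_{m+1}-1}$ together with $\gamma_{m+1}=\tfrac{1}{n_{m+1}+k_{m+1}-1}$ is small compared to $\eta$ and to the modulus of continuity of $F_\beta$ (uniformly in $\beta$, which is legitimate since $f_\beta$ has constant slope $\tilde s=13$ and the $\lambda_{n_i,k_i,\alpha_i}$ are fixed). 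By Lemma~\ref{lem:MincUpdt}\eqref{Lcra:1}, $\rho(\lambda_{n_{m+1},k_{m+1},\alpha},\id)<\varepsilon_{m+1}/2+\gamma_{m+1}+|\alpha|$, so if $k_{m+1}$ is chosen large this is $<\eta/\tilde s$ and composing with $F_\beta$ (a Lipschitz map with constant $\tilde s$ times a fixed constant) yields $\rho(G_{\beta,\alpha,m+1},F_\beta)<\eta$ for every $\beta$ and every admissible $\alpha$.

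The core of the argument is the crookedness of a suitable iterate. Here I would mimic the inductive covering argument of \cite{MT}: using that $F_\beta$ is admissible (leo with slope $\ge 4$, by Lemma~\ref{lem:fbetaleo} and the fact that each $\lambda_{n_i,k_i,\alpha_i}$ has large slope), and that $\lambda_{n_{m+1},k_{m+1},\alpha}$ stretches every arc (Lemma~\ref{lem:MincUpdt}\eqref{Lcra:3}--\eqref{Lcra:4}), one shows that for any $\omega\colon[0,1]\to\Ci$ there is a fixed $N$ (depending only on the chosen $n_{m+1},k_{m+1}$ and on $F_\beta$, which can be made uniform in $\beta$ by the leo estimate in the proof of Lemma~\ref{lem:fbetaleo}) so that $G^{N}_{\beta,\alpha,m+1}\circ\omega$ traces out an arc whose $\lambda$-diameter is forced below $\varepsilon_{m+1}$ at an intermediate stage and then gets crooked by the very last application of $\lambda_{n_{m+1},k_{m+1},\alpha}$. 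Concretely: apply $G_{\beta,\alpha,m+1}$ enough times that the image of $[0,1]$ under $G^{N-1}_{\beta,\alpha,m+1}\circ\omega$ is all of $\Ci$ (possible since $G_{\beta,\alpha,m+1}$ is leo, being $F_\beta$ leo postcomposed with a surjection that does not decrease diameters); then one more application, using that $\lambda_{n_{m+1},k_{m+1},\alpha}$ restricted to a full fundamental arc has the simple-crooked structure $\sigma_{-(n-1)}^R\sigma_n\sigma_{-(n-1)}\cdots\sigma_n\sigma_{-(n-1)}^L$ analysed in Lemma~\ref{lem:hatcrooked} and in the proof of Lemma~\ref{lem:MincUpdt}\eqref{Lcra:2}, makes $G^{N}_{\beta,\alpha,m+1}\circ\omega$ be $\delta$-crooked because any arc (in particular one of diameter $<\varepsilon_{m+1}$, after passing through the crookedness of $\sigma_{n_{m+1}}$) acquires the required ``back-and-forth'' fold within $\gamma_{m+1}<\delta/3$. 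The parity choice ``$k_{m+1}$ even'' enters exactly as in Observation~\ref{obs:gammaapart2}, ensuring $\lambda_{n_{m+1},k_{m+1}}$ commutes with the half-rotation, which is what allows the rotational corrections $\alpha$ to be absorbed without destroying crookedness; admissibility of $G_{\beta,\alpha,m+1}$ follows since composing admissible maps (slope $\ge 4$, leo) gives admissible maps, and $\lambda_{n_{m+1},k_{m+1},\alpha}\in C_\lambda(\Ci)$ by Proposition~\ref{prop:LebPres}, whence $G_{\beta,\alpha,m+1}\in C_\lambda(\Ci)$.

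The step I expect to be the main obstacle is obtaining the crookedness constant $\delta$ and the iterate $N$ \emph{uniformly over the whole family} $\{F_\beta\}_{\beta\in\R}$ and over all admissible $\alpha$ simultaneously. In \cite{MT} one deals with a single map and may choose constants after seeing it; here the perturbation $\lambda_{n_{m+1},k_{m+1},\alpha}$ must be fixed \emph{before} letting $\alpha$ range, and the estimate that ``$\lambda_{n_{m+1},k_{m+1},\alpha}$ does not decrease $\lambda$-diameter and covers at least five adjacent fundamental intervals'' must hold for all $|\alpha|<\tfrac{1}{2(n_{m+1}+k_{m+1}-1)}$ — this is precisely what Lemma~\ref{lem:MincUpdt}\eqref{Lcra:3}--\eqref{Lcra:6} was set up to provide, so the obstacle is really bookkeeping: tracking how the small rotations $r_{\pm\alpha}$ shift the finitely many ``extremal'' points $x_j$ of Observation~\ref{obs:gammaapart} by at most $\gamma_{m+1}/2$ and checking that the folds of $\sigma_{n_{m+1}}$ survive this shift. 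I would handle it by first proving the statement for $\alpha=0$ (pure $\lambda_{n_{m+1},k_{m+1}}$), extracting explicit $N$ and the gap $3\gamma_{m+1}<\delta$, and then invoking Lemma~\ref{lem:crooked} together with $\rho(\lambda_{n_{m+1},k_{m+1},\alpha},\lambda_{n_{m+1},k_{m+1}})\le 2|\alpha|<\gamma_{m+1}$ to upgrade to $(\delta+2\gamma_{m+1})$-crookedness for all admissible $\alpha$, absorbing the loss by having chosen $n_{m+1}$ slightly larger at the outset.
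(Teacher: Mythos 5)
Your plan matches the paper's setup (choosing $n_{m+1}$, $k_{m+1}$ via the constants $\varepsilon_{m+1}$, $\gamma_{m+1}$, invoking Lemma~\ref{lem:MincUpdt}~\eqref{Lcra:1} for the $\rho$-estimate, the Lebesgue-preservation via Proposition~\ref{prop:LebPres}), but the heart of the argument — the $\delta$-crookedness of $G^N_{\beta,\alpha,m+1}$ — is described in a way that would not work. You propose to ``iterate until $G^{N-1}_{\beta,\alpha,m+1}\circ\omega$ covers all of $\Ci$, then apply one more step'' and extract crookedness from the final application of $\lambda_{n_{m+1},k_{m+1},\alpha}$. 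That is exactly backwards. Once the image is all of $\Ci$ the hypothesis of Lemma~\ref{lem:MincUpdt}~\eqref{Lcra:2} (which requires $\lambda(\omega([0,1]))<\varepsilon_{m+1}$) no longer applies, and crookedness is not something you can recover at the end: once a map is surjective onto $\Ci$ you have lost the information about where the ``folds'' happened. What the paper actually does, and what any correct proof along these lines must do, is: (i) iterate the small arc $A=\omega([0,1])$ just until it reaches moderate size (the largest $M$ with $\lambda(G^M(A))<\gamma_{m+1}$), (ii) at this intermediate level apply Lemma~\ref{lem:MincUpdt}~\eqref{Lcra:2} to the arc $G^M(A)$ to extract two disjoint subarcs $M_1,M_2\subset G^M(A)$ whose images under $\lambda_{n_{m+1},k_{m+1},\alpha}$ are nearly equal (this is the fold), and (iii) push $M_1$ and $M_2$ forward $N$ more times, using the uniform leo estimate (Claim~\ref{clm:uniqueN}) so that $G^N(M_i)=\Ci$, while simultaneously tracking via the Lipschitz/near-inclusion estimate (Claim~\ref{clm:1}) that the whole image $G^{M+N}(A)$ stays within $\delta$ of each $G^N(M_i)$. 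Your proposal collapses (ii) and (iii) into one step and places the fold at the wrong time; you never obtain the two ``witness'' subarcs needed to verify the definition of $(G^N,\delta)$-crookedness.

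A secondary gap: you assert that ``composing admissible maps (slope $\ge 4$, leo) gives admissible maps'' to conclude $G_{\beta,\alpha,m+1}$ is admissible, but $\lambda_{n_{m+1},k_{m+1},\alpha}$ is \emph{not} admissible: it is a near-identity perturbation and certainly not leo (Lemma~\ref{lem:MincUpdt}~\eqref{Lcra:1} shows it is $O(\varepsilon_{m+1})$-close to the identity). So $G_{\beta,\alpha,m+1}=F_\beta\circ\lambda_{n_{m+1},k_{m+1},\alpha}$ composes a leo map with a non-leo map and leo of the composition requires a genuine argument. The paper carries this out by combining the non-shrinking property of $\lambda_{n_{m+1},k_{m+1},\alpha}$ (Lemma~\ref{lem:MincUpdt}~\eqref{Lcra:3}) with the uniform covering of $F_\beta$ (Claim~\ref{clm:uniqueN}, and the decomposition in equations (4.4)–(4.5)). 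Finally, your alternative of deducing $\alpha\ne 0$ from $\alpha=0$ via Lemma~\ref{lem:crooked} is potentially workable but has a circularity you'd have to resolve: the perturbation $\rho(G_{\beta,\alpha,m+1}^N,G_{\beta,0,m+1}^N)$ is magnified by a factor on the order of $s^N$, and $N$ depends on $\gamma_{m+1}$ through the covering estimate, so the claim that the loss can be ``absorbed by choosing $n_{m+1}$ slightly larger'' needs the same careful decoupling of $\varepsilon_{m+1}$ and $\gamma_{m+1}$ (via $n_{m+1}$) that the paper performs, and is cleaner to avoid altogether by building $\alpha$ into Lemma~\ref{lem:MincUpdt}, as the paper does.
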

\begin{proof}

	As we are accustomed already, let $f:\mathbb{S}^1\to\mathbb{S}^1$ be an admissible map and $\tilde f:\mathbb{R}\to \mathbb{R}$ its lifting. 
	First observe that $\lambda_{n_1,k_1,\alpha_1}\circ\ldots \circ \lambda_{n_m,k_m,\alpha_m}$ remain unchanged in the formula for $F_{\beta}$ for each $\beta$. 
	
	As $f_{\beta}$ and all the maps $\lambda_{n,k,\alpha}$ are piecewise affine and Lebesgue-measure preserving, the map $F_{\beta}$ is piecewise affine and Lebesgue measure-preserving as well and since critical points of $f_{\beta}$ for every $\beta$ are the same as the critical points of $f$, there is a positive real number $\iota$ so that if $a<b\in \mathbb{R}$ are such that $b-a<\iota$ then there is a point $c\in (a,b)$ so that $\tilde{F}_{\beta}$ is linear on both intervals $[a,c]$ and $[c,b]$.
	Since $|\tilde F'_{\beta}(t)|\geq 4$ for $t\in (a,c)\cup (c,b)$ it holds 
	\begin{equation}\label{eq:1}
	\lambda(\tilde{F}_{\beta}([a,b]))\geq 2(b-a) \text{  } \forall a<b\in \mathbb{R} \text{ such that } b-a<\iota.
	\end{equation}

	Furthermore, again since $F_{\beta}$ is piecewise affine and $r_{\beta}$ does not change the slope when composed with $f$ there exists $s\in \mathbb{R}$ so that $|\tilde F_{\beta}'(t)|<s$ for all $t\in \mathbb{R}$ where $|\tilde F_{\beta}(t)|$ exists (actually as the slopes of all the functions in the composition of $F_{\beta}$ are constant, $|\tilde F_{\beta}'(t)|$ is just the product of slopes of corresponding functions in the composition). Thus
\begin{equation}\label{eq:2}
	\lambda(F_{\beta}(C))\leq s\lambda(C) \text{ for every arc } C\subset \mathbb{S}^1.
	\end{equation}

Now let us prove the following claim, which in particular shows that $F_{\beta}$ is leo for every $\beta$.
\begin{clm}\label{clm:uniqueN}
	Let $\xi>0$. There is an integer $N\geq 0$ so that for every arc $J\subset \Ci$ with $\lambda(J)>\xi$ and all $\beta\in\R$ it holds that $F_{\beta}^N(J)=\Ci$.
\end{clm}
\begin{proof}[Proof of Claim~\ref{clm:uniqueN}] Take any $M$ so that $4^M\xi>1$. Since diameter of arcs that contain less than $2$ critical points of $f_{\beta}$ grows at least by a factor $4$ and since $\lambda_{n,k,\alpha}$ by Lemma~\ref{lem:MincUpdt} \eqref{Lcra:3} does not shrink intervals, there is $j<M$ so that $F_{\beta}^j(J)$ contains two critical points of $f_{\beta}$. By the definition of $f_{\beta}$ and application of arguments from the proof of Lemma~\ref{lem:fbetaleo} we have that $F_{\beta}^{j+8}(J)=\Ci$; therefore we can set $N=M+8$.
\end{proof}

 In order to apply Lemma~\ref{lem:MincUpdt}, fix $\eps_i=\frac{n_i-1}{n_i+k_i-1}$ and $\gamma_i=\frac{1}{n_i+k_i-1}$ for every $i\geq 1$.
    We choose odd $n_{m+1}\geq 7$ and even $k_{m+1}\geq 1$ large enough so 
 $\eps_{m+1}<\eta/s$. By Claim~\ref{clm:uniqueN} there exists $N\in \N$ so that if $\lambda(A)>\eps_{m+1}/4$ for some arc $A\subset\mathbb{S}^1$,  then $F_{\beta}^N(A)=\mathbb{S}^1$ for any $\beta\in \R$. 
	Observe that if  $k_{m+1}$ is chosen to be large enough we have
	$$
	0<\gamma_{m+1}<\min (\iota, s^{-N}, \eps_{m+1}/4, \delta s^{-N}/8).
	$$
	It follows from \eqref{eq:1} that:
	\begin{equation}\label{eq:3}
	\lambda(F_{\beta}(A))\geq 2\min\{\lambda(A),\gamma_{m+1}\} \textrm{ for any subarc } A\subset \mathbb{S}^1.
	\end{equation}

	We define $G_{\beta,\alpha,m+1}=F_{\beta}\circ \lambda_{n_{m+1},k_{m+1},\alpha}:\mathbb{S}^1\to \mathbb{S}^1$ and let $\tilde G_{\beta,\alpha,m+1}: \mathbb{R}\to \mathbb{R}$ be a lifting of $G_{\beta,\alpha,m+1}$. Note again that $G_{\beta,\alpha,m+1}$ is a piecewise affine map.  
 Since $|f_\beta'(t)|\geq 4$ for all but finitely many arguments, and it is composed with piecewise linear maps preserving Lebesgue measure, this gives that $|\tilde G_{\beta,\alpha,m+1}'(t)|\geq 4$ for every $t\in \mathbb{R}$ for which $\tilde G_{\beta,\alpha,m+1}'(t)$ exists. 
	Since $\gamma_{m+1}<\eta$ and $|\alpha|<\gamma_{m+1}/2$ we get from Lemma~\ref{lem:MincUpdt}\eqref{Lcra:5} that if
	$\lambda(A)\geq \eta$ for some arc $A\subset\mathbb{S}^1$
	then $A\subset \lambda_{n_{m+1},k_{m+1},\alpha}(A)$ and thus $ F_{\beta}(A)\subset G_{\beta,\alpha,m+1}(A)$.
	
Furthermore, since $|\tilde{\lambda}_{n_{m+1},k_{m+1},\alpha}(t)-t|<\eps_{m+1}/2+\gamma_{m+1}+\alpha<\eps_{m+1}/2+2\gamma_{m+1}<\eps_{m+1}<\eta/s$ and $|\tilde F_{\beta}(\tilde{\lambda}_{n_{m+1},k_{m+1},\alpha}(t))-\tilde F_{\beta}(t) |\leq s|\tilde{\lambda}_{n_{m+1},k_{m+1},\alpha}(t)-t|$, we get $|\tilde{G}_{\beta,\alpha,m+1}(t)-\tilde F_{\beta}(t)|<\eta$ for each $t\in \mathbb{R}$. This implies $\rho(F_{\beta},G_{\beta,\alpha,m+1})<\eta$.

	To show that $G_{\beta,\alpha,m+1}$ is leo, we first prove that for every subarc $C\subset \mathbb{S}^1$ such that $\lambda(C)>\eps_{m+1}/4$, it follows that 
		\begin{equation}\label{eq:4}
	G_{\beta,\alpha,m+1}^{N-1}(f_{\beta}(C))=\mathbb{S}^1
	\end{equation}
	 where $N$ is provided by Claim~\ref{clm:uniqueN} to $\xi=\eps_{m+1}/4$.
	
		By Claim~\ref{clm:uniqueN}, since $f_{\beta}^N(C)=\mathbb{S}^1$ and $\gamma_{m+1}\leq s^{-N}$, it follows from \eqref{eq:2} that for each $j\in \N$ $\lambda(f_{\beta}^j(C))\geq \gamma_{m+1}$. Using Lemma~\ref{lem:MincUpdt}~\eqref{Lcra:5} inductively we get $f_{\beta}^j(C)\subset G_{\beta,\alpha,m+1}^{j-1}({f_{\beta}}(C))$ for $j\in \{1,\ldots, N\}$ which proves condition \eqref{eq:4}.
	
	Let $A\subset \mathbb{S}^1$ be an arc with $\lambda(A)>\gamma_{m+1}$.  By Lemma~\ref{lem:MincUpdt}~\eqref{Lcra:4}  it follows
	there exists $c\in\Int(A)$ and arcs $A_1,A_2\subset A$ such that $A=A_1\cup A_2$ and $A_1\cap A_2=\{c\}$ with $\lambda(\lambda_{n_{m+1},k_{m+1},\alpha}(A_1))>\eps_{m+1}/4$ and $\lambda(\lambda_{n_{m+1},k_{m+1},\alpha}(A_2))>\eps_{m+1}/4$. Lemma~\ref{lem:MincUpdt}~\eqref{Lcra:4} actually gives the statement that $\lambda(\lambda_{n_{m+1},k_{m+1},\alpha}(A))>\eps_{m+1}/2$ 
which leads by Darboux property to desired conditions. But then we have the following:
		\begin{equation}\label{eq:5}
		\begin{split}
		&\quad \text{if } \lambda(A)>\gamma_{m+1} \text{ then  there exists } c\in\Int(A) \text{ and arcs } A_1, A_2\subset A \text{ such that }\\
		&\quad A=A_1\cup A_2,  A_1\cap A_2=\{c\} \text{ for which } G_{\beta,\alpha,m+1}^N(A_1)=\mathbb{S}^1 \text{ and }\\
		&\quad 
		 G_{\beta,\alpha,m+1}^N(A_2)=\mathbb{S}^1.
		\end{split}
		\end{equation}
		 In particular $G_{\beta,\alpha,m+1}^N(A)=\mathbb{S}^1$.

	To finish the proof that $G_{\beta,\alpha,m+1}$ is leo we  need to show that any arc $A\subset \mathbb{S}^1$ will get mapped to $\mathbb{S}^1$ by some iteration of $G_{\beta,\alpha,m+1}$. But it follows from \eqref{eq:1}, Lemma~\ref{lem:MincUpdt}\eqref{Lcra:3} and the fact that $F_{\beta}$ is Lebesgue measure-preserving that there exists $m\in \N$ such that $\lambda(G_{\beta,\alpha,m+1}^{m}(A))\geq \gamma_{m+1}$. Using \eqref{eq:5} we indeed get $G_{\beta,\alpha,m+1}^{m+N}(A)=\mathbb{S}^1$. Moreover, since the slopes of all the functions in the composition of $G_{\beta,\alpha,m+1}$ are constant, the absolute value of $\tilde G_{\beta,\alpha,m+1}'$ wherever defined is just the product of slopes (wherever defined) of corresponding functions in the composition; the absolute value of the slope of  $f_{\beta}$ is $13$ and the absolute values of slopes of $\lambda_{n,k,\alpha}$ for $n\geq 7$ and $k\geq 1$ is also strictly greater than $1$, therefore $|\tilde G_{\beta,\alpha,m+1}'(t)|>4$ for all $t\in \R$ where $\tilde G_{\beta,\alpha,m+1}'$ is defined. This shows that maps $\tilde G_{\beta,\alpha,m+1}$ are admissible for every $\beta\in\R$. Obviously, every $G_{\beta,\alpha,m+1}$ is also Lebesgue measure-preserving degree one circle map as a composition of Lebesgue measure-preserving degree one circle maps.

	What remains to be shown is the statement about $\delta$-crookedness.
	First we have a claim that will be important in the proof of $\delta$-crookedness.
	 
	\begin{clm}\label{clm:1}
	Let $A\subset \mathbb{S}^1$ be a subarc such that $\lambda(A)\geq \gamma_{m+1}$, $B\subset \mathbb{S}^1$ and let $r\in \mathbb{R}$ be such that $B\subset B(A,r)$.  
	For each $j\in \N$, $G_{\beta,\alpha,m+1}^j(B)\subset B(G_{\beta,\alpha,m+1}^j(A),q)$ where $q= s^j(r+2\gamma_{m+1})$.
	\end{clm}

	\begin{proof}[Proof of Claim~\ref{clm:1}]
	Using \eqref{eq:3} and Lemma~\ref{lem:MincUpdt}~\eqref{Lcra:3} we get that $\lambda(G_{\beta,\alpha,m+1}^{i}(A))\geq \gamma_{m+1}$ for each $i\in \N$. Now set $q_0=r$, $q_{i+1}=s(q_i+\gamma_{m+1})$ for $i\in \N_0$. Repeatedly using Lemma~\ref{lem:MincUpdt}~\eqref{Lcra:6} and \eqref{eq:2} one gets $G_{\beta,\alpha,m+1}^{j}(B)\subset B( G_{\beta,\alpha,m+1}^{j}(A),q_j)$. Observing $q_j=s^j r +\gamma_{m+1}\frac{s^{j+1}-1}{s-1}<s^j(r+\frac{s}{s-1}\gamma_{m+1})<s^j(r+2\gamma_{m+1})$ finishes the claim.
	\end{proof}

		Take $\omega:[0,1]\to \mathbb{S}^1$; we will show that it is $(G_{\beta,\alpha,m+1}^{N},\delta)$-crooked.
 Put $A=\omega([0,1])$ and 
$$
[a,b]:=[G_{\beta,\alpha,m+1}^N(0),G_{\beta,\alpha,m+1}^N(1)]\subset G_{\beta,\alpha,m+1}^N(A).
$$
First, we can obviously assume that $b-a>2\delta$. 
To prove that $\omega$ is $(G_{\beta,\alpha,m+1}^{N},\delta)$-crooked
		 it is enough to show:
 
		\begin{equation}\label{eq:7}
			\exists \text{ two disjoint subarcs } A_1,A_2\subset \mathbb{S}^1 \text{ so that } [a+\delta,b-\delta] \subset  G_{\beta,\alpha,m+1}^{N}(A_1)\cap G_{\beta,\alpha,m+1}^N(A_2).
		\end{equation}

		For $\lambda(A)\geq \gamma_{m+1}$ statement of \eqref{eq:5} directly implies \eqref{eq:7}; therefore we can assume that $\lambda(A)<\gamma_{m+1}$. First note that $\lambda(G^{N}_{\beta,\alpha,m+1}(A))\geq 2\delta>\gamma_{m+1}$. Let $M\in\N$ be the largest natural number so that $\lambda(G^{M}_{\beta,\alpha,m+1}(A))<\gamma_{m+1}$. 
			Let $[c,d]:=\lambda_{n_{m+1},k_{m+1},\alpha}( G^{M}_{\beta,\alpha,m+1}(A))$. Since this arc does not cover the circle, we can assume that in a natural ordering on $[c,d]$ of $\mathbb{S}^1$ we have $c<d$.
   
		{\bf Case I.} $\lambda(\lambda_{n_{m+1},k_{m+1},\alpha}(G^{M}_{\beta,\alpha,m+1}(A)))>4\gamma_{m+1}$.\\
	 Because $\lambda(G^{M}_{\beta,\alpha,m+1}(A))<\gamma_{m+1}$, Lemma~\ref{lem:MincUpdt}~\eqref{Lcra:1} gives that 
  \begin{eqnarray*}
  d-c&<&\lambda(G^{M}_{\beta,\alpha,m+1}(A)+2\rho(\lambda_{n_{m+1},k_{m+1},\alpha},\text{id}_{\mathbb{S}^1})\\
  &< &\eps_{m+1}+3\gamma_{m+1}+2\alpha\leq \eps_{m+1}+4\gamma_{m+1}.
  \end{eqnarray*}
  There exist points $c_1<d_1\in [c,d]$ so that $4\gamma_{m+1}<d_1-c_1<\eps_{m+1}$ and $[c,d]\subset B([c_1,d_1],4\gamma_{m+1})$. 

 Let $[a_1,b_1]\subset [a,b]$ be any interval such that $\lambda_{n_{m+1},k_{m+1},\alpha}([a_1,b_1])=[c_1,d_1]$.
Lemma~\ref{lem:MincUpdt}~\eqref{Lcra:2} gives that any map $\omega\colon [0,1]\to [a_1,b_1]$ is $(\lambda_{n_{m+1},k_{m+1},\alpha},3\gamma_{m+1})$-crooked, therefore
there exist two disjoint subarcs $M_1,M_2\subset G^{M}_{\beta,\alpha,m+1}(A)$ such that $\lambda_{n_{m+1},k_{m+1},\alpha}(M_1)=[c_1,d_1-3\gamma_{m+1}]$ and $\lambda_{n_{m+1},k_{m+1},\alpha}(M_2)=[c_1+3\gamma_{m+1},d_1]$.

There exist two disjoint subarcs $A_1,A_2\subset A$ so that $G^{M}_{\beta,\alpha,m+1}(A_1)=M_1$ and  $G^{M}_{\beta,\alpha,m+1}(A_2)=M_2$. Observe that 
$$\lambda(\lambda_{n_{m+1},k_{m+1},\alpha}(G^{M}_{\beta,\alpha,m+1}(A))\subset B(\lambda_{n_{m+1},k_{m+1},\alpha}(M_1),7\gamma_{m+1})$$ as well as
$$
\lambda(\lambda_{n_{m+1},k_{m+1},\alpha}(G^{M}_{\beta,\alpha,m+1}(A))\subset B(\lambda_{n_{m+1},k_{m+1},\alpha}(M_2)),7\gamma_{m+1})
$$ 
and it follows from \eqref{eq:2} that $G^{M+1}_{\beta,\alpha,m+1}(A)\subset B(G_{\beta,\alpha,m+1}(M_1),7s\gamma_{m+1})$ as well as $G^{M+1}_{\beta,\alpha,m+1}(A)\subset B(G_{\beta,\alpha,m+1}(M_2),7s\gamma_{m+1})$. 

Equation \eqref{eq:3} implies that that $\lambda(G_{\beta,\alpha,m+1}(M_1))>\gamma_{m+1}$, $\lambda(G_{\beta,\alpha,m+1}(M_2))>\gamma_{m+1}$ which by Claim~\ref{clm:1} gives that 
		$$G^{M+N}_{\beta,\alpha,m+1}(A)=G^{N-1}_{\beta,\alpha,m+1}(G^{M+1}_{\beta,\alpha,m+1}(A))\subset B(G_{\beta,\alpha,m+1}^{N-1}(M_1),s^{N-1}(7s\gamma_{m+1}+2\gamma_{m+1})),$$
		 and
		 $$G^{M+N}_{\beta,\alpha,m+1}(A)=G^{N-1}_{\beta,\alpha,m+1}(G^{M+1}_{\beta,\alpha,m+1}(A))\subset B(G_{\beta,\alpha,m+1}^{N-1}(M_2),s^{N-1}(7s\gamma_{m+1}+2\gamma_{m+1})).$$
		 Because $s^{N-1}(7s\gamma_{m+1}+2\gamma_{m+1})<8\gamma_{m+1}s^{N}<\delta$ it follows that $$[a+\delta,b-\delta]\subset G^{N-1}_{\beta,\alpha,m+1}(M_1)\cap G^{N-1}_{\beta,\alpha,m+1}(M_2)=G^{N+M}_{\beta,\alpha,m+1}(A_1)\cap G^{N+M}_{\beta,\alpha,m+1}(A_2).$$
{\bf Case II.} $\lambda(\lambda_{n_{m+1},k_{m+1},\alpha}(G^{M}_{\beta,\alpha,m+1}(A)))\leq 4\gamma_{m+1}$.\\
			Using \eqref{eq:2} and the choice of $M$ we have $\gamma_{m+1}\leq \lambda(G^{M}_{\beta,\alpha,m+1}(A))\leq 4s\gamma_{m+1}$. By Lemma~\ref{lem:MincUpdt}~\eqref{Lcra:1} and \eqref{Lcra:4} we get $\eps_{m+1}/2<d-c<\eps_{m+1}+\gamma_{m+1}(4s+3)$.
   There exist points $c_1<d_1\in [c,d]$ so that $4\gamma_{m+1}<d_1-c_1<\eps_{m+1}$ and $[c,d]\subset B([c_1,d_1],\gamma_{m+1}(4s+3))$.
			Lemma~\ref{lem:MincUpdt}~\eqref{Lcra:2} gives that a map $\omega:[0,1]\to [c_1,d_1]$ is $(\lambda_{n_{m+1},k_{m+1},\alpha},3\gamma_{m+1})$-crooked. Therefore, there exist two disjoint subarcs $M_1,M_2\subset G^{M+1}_{\beta,\alpha,m+1}(A)$ such that $\lambda_{n_{m+1},k_{m+1},\alpha}(M_1)=[c_1,d_1-3\gamma_{m+1}]$ and $\lambda_{n_{m+1},k_{m+1},\alpha}(M_2)=[c_1+3\gamma_{m+1},d_1]$. There exist two disjoint subarcs $A_1,A_2\subset A$ so that $G^{M+1}_{\beta,\alpha,m+1}(A_1)=M_1$ and  $G^{M+1}_{\beta,\alpha,m+1}(A_2)=M_2$.

		Since $s\geq 4$ we get $\lambda(\lambda_{n_{m+1},k_{m+1},\alpha}(G^{M+1}_{\beta,\alpha,m+1}(A))\subset B(\lambda_{n_{m+1},k_{m+1},\alpha}(M_1),
    \gamma_{m+1}(4s+3)+3\gamma_{m+1})\subset B(\lambda_{n_{m+1},k_{m+1},\alpha}(M_1),5s\gamma_{m+1})$ as well as
		$\lambda(\lambda_{n_{m+1},k_{m+1},\alpha}(G^{M+1}_{\beta,\alpha,m+1}(A))\subset B(\lambda_{n_{m+1},k_{m+1},\alpha}(M_2),\gamma_{m+1}( 4s+3)+3\gamma_{m+1})\subset B(\lambda_{n_{m+1},k_{m+1},\alpha}(M_2),5s\gamma_{m+1})$ and it follows from \eqref{eq:2} that $G^{M+2}_{\beta,\alpha,m+1}(A)\subset B(G_{\beta,\alpha,m+1}(M_1),5s^2\gamma_{m+1})$ as well as $G^{M+2}_{\beta,\alpha,m+1}(A)\subset B(G_{\beta,\alpha,m+1}(M_2), 5s^2\gamma_{m+1})$. By equation \eqref{eq:3} we get\\$\lambda(G_{\beta,\alpha,m+1}(M_1)),\lambda(G_{\beta,
        \alpha,m+1}(M_2))>\gamma_{m+1}$. It follows from Claim~\ref{clm:1} that 
		
		$$G^{N+M}_{\beta,\alpha,m+1}(A)=G^{N-2}_{\beta,\alpha,m+1}(G^{M+2}_{\beta,\alpha,m+1}(A))\subset B(G_{\beta,\alpha,m+1}^{N-2}(M_1),s^{N-2}(5s^2\gamma_{m+1}+2\gamma_{m+1})),$$
		and
		$$G^{N+M}_{\beta,\alpha,m+1}(A)=G^{N-2}_{\beta,\alpha,m+1}(G^{M+2}_{\beta,\alpha,m+1}(A))\subset B(G_{\beta,\alpha,m+1}^{N-2}(M_2),s^{N-2}(5s^2\gamma_{m+1}+2\gamma_{m+1})).$$

		Because $s^{N-2}(5s^2\gamma_{m+1}+2\gamma_{m+1})<6\gamma_{m+1}s^{N}<\delta$ it follows that $$[a+\delta,b-\delta]\subset G^{N-2}_{\beta,\alpha,m+1}(M_1)\cap G^{N-2}_{\beta,\alpha,m+1}(M_2)=G^{N+M}_{\beta,\alpha,m+1}(A_1)\cap G^{N+M}_{\beta,\alpha,m+1}(A_2).$$
	
\end{proof}

\begin{lem}\label{lem:symm-half}
For every $\alpha,\beta\in \R$ it follows $\tilde G_{\beta,\alpha,m+1}(t)+1/2=\tilde G_{\beta,\alpha,m+1}(t+1/2)$ for every $t\in \R$.
\end{lem}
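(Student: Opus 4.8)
The plan is to reduce the claim to the stability of a single structural property under composition. Call a degree-one circle map $h$ \emph{half-equivariant} if its lift satisfies $\tilde h(t+1/2)=\tilde h(t)+1/2$ for all $t\in\R$. First I would record the only routine computation needed: half-equivariance is preserved under composition, since $\widetilde{g\circ h}(t+1/2)=\tilde g(\tilde h(t)+1/2)=\tilde g(\tilde h(t))+1/2=\widetilde{g\circ h}(t)+1/2$. Because
$$
G_{\beta,\alpha,m+1}=f_\beta\circ\lambda_{n_1,k_1,\alpha_1}\circ\cdots\circ\lambda_{n_m,k_m,\alpha_m}\circ\lambda_{n_{m+1},k_{m+1},\alpha},
$$
it then suffices to verify half-equivariance for each of the factors.

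Next I would dispatch the factors one at a time. Every rotation $r_\gamma$ is half-equivariant (its lift being $t\mapsto t+\gamma$), so $f_\beta=r_\beta\circ f$ is half-equivariant as soon as $f$ is, and $f$ is half-equivariant by its very construction: the graph of its lift $\tilde f$, pictured in Figure~\ref{fig:mapf}, is invariant under the translation $(t,s)\mapsto(t+1/2,s+1/2)$. For the perturbation factors I would invoke Observation~\ref{obs:gammaapart2}: all the $n_i$ are odd and all the $k_i$ (for $i\ge 1$) are even, so $\lambda_{n_i,k_i}(t+1/2)=\lambda_{n_i,k_i}(t)+1/2$ holds on $\mathbb{S}^1$; lifting this, the discrepancy $\tilde\lambda_{n_i,k_i}(t+1/2)-\tilde\lambda_{n_i,k_i}(t)-1/2$ is a continuous integer-valued function of $t$, hence constant, and it vanishes because $\tilde\lambda_{n_i,k_i}$ is odd about $(1/2,1/2)$ by Observation~\ref{obs:lambda1} (\ref{obs:lambda1(2)}), giving $\tilde\lambda_{n_i,k_i}(1/2)=1/2$ and $\tilde\lambda_{n_i,k_i}(1)=1$. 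Thus each $\lambda_{n_i,k_i}$ is half-equivariant, and so is $\lambda_{n_i,k_i,\alpha_i}=r_{-\alpha_i}\circ\lambda_{n_i,k_i}\circ r_{\alpha_i}$, being a composition of half-equivariant maps; the same applies verbatim to $\lambda_{n_{m+1},k_{m+1},\alpha}$.

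Combining the two paragraphs, $G_{\beta,\alpha,m+1}$ is a composition of half-equivariant maps, hence half-equivariant, which is exactly the asserted identity, valid for all $\alpha,\beta\in\R$. I expect the only real point of care — the ``main obstacle'', such as it is — to be purely organisational: making sure the parity hypothesis of Observation~\ref{obs:gammaapart2} is available for \emph{every} perturbation factor, i.e.\ that all $k_i$ with $i\ge 1$ occurring in $G_{\beta,\alpha,m+1}$ are even. This is guaranteed because $G_{\beta,\alpha,m+1}$ is produced by iterating Lemma~\ref{lem:LemMTadjusted}, each application of which outputs an even $k$, with the base case $F_\beta=f_\beta$ introducing no perturbation factor at all; the minor additional subtlety of passing from the circle identity of Observation~\ref{obs:gammaapart2} to its lifted form is handled by the integer-constant argument above.
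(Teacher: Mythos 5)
Your proof is correct and takes essentially the same route as the paper's: decompose $G_{\beta,\alpha,m+1}$ into the factors $f_\beta$ and the $\lambda_{n_i,k_i,\alpha_i}$, verify the $1/2$-translation symmetry for each (citing the repetitive structure of $f$ and Observation~\ref{obs:gammaapart2} for the perturbations), and compose. Your version is slightly tighter in two small places the paper leaves implicit --- the passage from the circle identity to the lifted identity via the continuous-integer-valued-hence-constant argument, and the explicit check that every $k_i$ in the chain is even because Lemma~\ref{lem:LemMTadjusted} always outputs even $k$ --- but these are refinements of the same decomposition, not a different argument.
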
	

\begin{proof}
	First note that due to the repetitive structure of $f_{\beta}$ it holds $f_{\beta}(t)+1/2=f_{\beta}(t+1/2)$ for every $t\in \mathbb{S}^1$ and every $\beta\in \R$. Furthermore, because of the repetitive block structure of maps $\lambda_{n_{m+1},k_{m+1},\alpha}$ if $n_{m+1}$ is odd and $k_{m+1}$ is even then also $\lambda_{n_{m+1},k_{m+1},\alpha}(t)+1/2=\lambda_{n_{m+1},k_{m+1},\alpha}(t+1/2)$ for every $t\in \mathbb{S}^1$ by Observation~\ref{obs:gammaapart2}. Therefore, we first obtain that $F_{\beta}(t)+1/2=F_{\beta}(t+1/2)$ for every $t\in \Ci$ and every $\beta\in  \R$ and thus also $G_{\beta,\alpha,m+1}(t)+1/2=G_{\beta,\alpha,m+1}(t+1/2)$ for every $t\in \Ci$, every $\beta\in  \R$ and every $|\alpha|<\frac{1}{2(n_{m+1}+k_{m+1}-1)}$. But then the same follows for the liftings, namely $\tilde G_{\beta,\alpha,m+1}(t)+1/2=\tilde G_{\beta,\alpha,m+1}(t+1/2)$ for every $t\in \R$ and every $|\alpha|<\frac{1}{2(n_{m+1}+k_{m+1}-1)}$.
\end{proof}

\begin{lem}\label{lem:pi}
For all $i\in\{1,\ldots, m\}$ and arbitrary $\beta\in \R$ let integers $n_i,k_i$, and real numbers $\alpha_i$ and the map $F_{\beta}$ be fixed. Let $x\in \mathbb{R}$ so that $\tilde F_{\beta}(x)>\tilde F_{\beta}(y)$ for each $y<x$. Let  $n_{m+1}$ and $k_{m+1}$ be provided by Lemma~\ref{lem:LemMTadjusted} for some $\eta$ and $\delta$.  Then there are $|\alpha_{m+1}|\leq \frac{1}{2(n_{m+1}+k_{m+1}-1)}$ and $p_{m+1}\in\mathbb{R}$ 
such that $\tilde G_{\beta,\alpha_{m+1},m+1}(p_{m+1})> \tilde G_{\beta,\alpha_{m+1},m+1}(q)$ for all $q<p_{m+1}$ and $\tilde\lambda_{n_{m+1},k_{m+1},\alpha_{m+1}}(p_{m+1})=x$.
In particular $|p_{m+1}-x|\leq \frac{n_{m+1}}{n_{m+1}+k_{m+1}-1}$.
\end{lem}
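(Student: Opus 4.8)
The statement concerns the lifting $\tilde\lambda_{n_{m+1},k_{m+1},\alpha}$, which by Observation~\ref{obs:lambda1}(\ref{obs:lambda1(3)}) fixes the grid points $\frac{j}{n_{m+1}+k_{m+1}-1}+i$ and by Observation~\ref{obs:gammaapart} attains its local maxima on consecutive intervals $\tilde I_j$ at points $x_j$ that are spaced exactly $\gamma_{m+1}=\frac1{n_{m+1}+k_{m+1}-1}$ apart, with equally spaced images. The key structural fact is that as $\alpha$ ranges over $[-\gamma_{m+1}/2,\gamma_{m+1}/2]$ (equivalently, $r_\alpha$ shifts the grid), the set of points where $\tilde\lambda_{n_{m+1},k_{m+1},\alpha}$ takes each ``first maximal value'' sweeps continuously across an interval of length $\gamma_{m+1}$; combined with the fact that consecutive such maxima lie $\gamma_{m+1}$ apart and the whole picture is $\gamma_{m+1}$-periodic (up to the relevant translation), this means that for \emph{every} target value $x$ in the image there is a choice of $\alpha_{m+1}$ with $|\alpha_{m+1}|\le\gamma_{m+1}/2$ and a point $p_{m+1}$ at which $\tilde\lambda_{n_{m+1},k_{m+1},\alpha_{m+1}}$ realises the value $x$ \emph{and} $p_{m+1}$ is a strict ``running maximum'' point, i.e. $\tilde\lambda_{n_{m+1},k_{m+1},\alpha_{m+1}}(p_{m+1})>\tilde\lambda_{n_{m+1},k_{m+1},\alpha_{m+1}}(q)$ for all $q<p_{m+1}$.

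First I would fix the target $x$ from the hypothesis (the first place where $\tilde F_\beta$ reaches its value, supplied by the setup of Lemma~\ref{lem:pi}). I would then analyse the uncomposed perturbation: among the preimages of $x$ under $\tilde\lambda_{n_{m+1},k_{m+1},\alpha}$, I want one that sits at a point where the map has not yet exceeded the value $x$ to the left. Because $\tilde\lambda_{n_{m+1},k_{m+1},\alpha}$ on each $\tilde I_j$ is an odd rescaled copy of $\hat\lambda$ with image of diameter $\frac{n_{m+1}}{n_{m+1}+k_{m+1}-1}$ (Observation~\ref{obs:lambda2}(\ref{obs:lambda2(4)})), and the maximal values over the $\tilde I_j$ increase by exactly $\gamma_{m+1}$ as $j$ increases, the point $x_j$ at which the strip $\tilde I_j$ first attains its maximum $\tilde\lambda(x_j)$ is automatically a running-maximum point of the whole lifting. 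So the task reduces to: choose $j$ with $\tilde\lambda_{n_{m+1},k_{m+1},\alpha}(x_j)=x$ for a suitable $\alpha$. For fixed $\alpha$ the values $\{\tilde\lambda_{n_{m+1},k_{m+1},\alpha}(x_j)\}_j$ form an arithmetic progression with common difference $\gamma_{m+1}$; varying $\alpha$ continuously over $[-\gamma_{m+1}/2,\gamma_{m+1}/2]$ shifts this progression continuously by a full period $\gamma_{m+1}$ (this is exactly the content of conjugating by $r_\alpha$ and using Observation~\ref{obs:lambda1}(\ref{obs:lambda1(3)})). Hence there is an $\alpha_{m+1}$ in this range and an index realising $x$ exactly; set $p_{m+1}:=x_j$ for that index.

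Next I would upgrade the running-maximum property from the bare perturbation $\tilde\lambda_{n_{m+1},k_{m+1},\alpha_{m+1}}$ to the composition $\tilde G_{\beta,\alpha_{m+1},m+1}=\tilde F_\beta\circ\tilde\lambda_{n_{m+1},k_{m+1},\alpha_{m+1}}$. By hypothesis $\tilde F_\beta(x)>\tilde F_\beta(y)$ for all $y<x$, and by construction $\tilde\lambda_{n_{m+1},k_{m+1},\alpha_{m+1}}(q)<x$ for all $q<p_{m+1}$ while $\tilde\lambda_{n_{m+1},k_{m+1},\alpha_{m+1}}(p_{m+1})=x$. Since $\tilde F_\beta$ is (the lifting of) an increasing-to-the-first-maximum function up to the argument $x$, $\tilde G_{\beta,\alpha_{m+1},m+1}(q)=\tilde F_\beta(\tilde\lambda_{n_{m+1},k_{m+1},\alpha_{m+1}}(q))<\tilde F_\beta(x)=\tilde G_{\beta,\alpha_{m+1},m+1}(p_{m+1})$ for every $q<p_{m+1}$, which is precisely the desired running-maximum statement for $G_{\beta,\alpha_{m+1},m+1}$. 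The final distance bound $|p_{m+1}-x|\le\frac{n_{m+1}}{n_{m+1}+k_{m+1}-1}$ then follows because both $p_{m+1}$ and $x$ lie in (or very near) the same block $\tilde I_j$ of diameter $\gamma_{m+1}$, whose image has diameter $\frac{n_{m+1}}{n_{m+1}+k_{m+1}-1}$ and which contains the grid point fixed by $\tilde\lambda_{n_{m+1},k_{m+1},\alpha_{m+1}}$; a short computation using Observations~\ref{obs:lambda1} and~\ref{obs:gammaapart} gives the estimate.

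\textbf{Main obstacle.} The genuinely delicate point is the simultaneous control: one must choose the single parameter $\alpha_{m+1}$ so that (a) the perturbation still satisfies the crookedness and admissibility conclusions of Lemma~\ref{lem:LemMTadjusted} (which is guaranteed as soon as $|\alpha_{m+1}|<\gamma_{m+1}/2$, so this part is automatic once we stay in range), and (b) the \emph{exact} value $x$ is hit \emph{at a running-maximum point}, not merely approximated. The subtlety is that ``running maximum of the composition'' could in principle be destroyed if $\tilde F_\beta$ were not monotone up to its first maximum — but the hypothesis is precisely tailored to rule this out, so the argument goes through. I expect the only real care needed is in verifying that the sweep of the arithmetic progression of block-maxima values under $\alpha\mapsto\tilde\lambda_{n_{m+1},k_{m+1},\alpha}$ is genuinely surjective onto a full period (continuity plus the endpoint shift by $\gamma_{m+1}$), and in tracking the off-by-one in the distance bound when $x$ happens to sit near the boundary between two blocks.
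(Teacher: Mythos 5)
Your proposal is correct and follows essentially the same route as the paper's proof: Observation~\ref{obs:gammaapart} supplies consecutive running-maximum points of $\tilde\lambda_{n_{m+1},k_{m+1},0}$ whose values are exactly $\gamma_{m+1}$ apart, a rotation with $|\alpha_{m+1}|\leq\gamma_{m+1}/2$ makes one of them hit $x$ exactly, the hypothesis on $\tilde F_{\beta}$ transfers the strict running-maximum property to the composition, and the distance bound comes from the closeness of $\tilde\lambda_{n_{m+1},k_{m+1},\alpha_{m+1}}$ to the identity (the paper just invokes Lemma~\ref{lem:MincUpdt}\eqref{Lcra:1} here, which is cleaner than your block-by-block computation). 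The one slip is notational: $p_{m+1}$ must be the rotated point $r_{-\alpha_{m+1}}(x_j)$ rather than $x_j$ itself, exactly as in the paper.
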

\begin{proof}	

	By Observation~\ref{obs:gammaapart} 
 there are $x_j,x_{j+1}\in \mathbb{R}$ such that $|\tilde\lambda_{n_{m+1},k_{m+1},0}(x_j)-\tilde\lambda_{n_{m+1},k_{m+1},0}(x_{j+1})|=\frac{1}{n_{m+1}+k_{m+1}-1}$ and  $\tilde\lambda_{n_{m+1},k_{m+1},0}(x_j)\leq x\leq \tilde\lambda_{n_{m+1},k_{m+1},0}(x_{j+1})$. 
 Then there is $|\alpha_{m+1}|\leq \frac{1}{2(n_{m+1}+k_{m+1}-1)}=\gamma_{m+1}/2$
 such that $\tilde\lambda_{n_{m+1},k_{m+1},\alpha_{m+1}}(r_{-\alpha_{m+1}}(x_j))=x$ or $\tilde\lambda_{n_{m+1},k_{m+1},\alpha_{m+1}}(r_{-\alpha_{m+1}}(x_{j+1}))=x$. Put $p_{m+1}=r_{-\alpha_{m+1}}(x_j)$ or $p_{m+1}=r_{-\alpha_{m+1}}(x_{j+1})$ depending on the case. By the definition of $x_j$ (resp. $x_{j+1}$) we have that
 $\tilde\lambda_{n_{m+1},k_{m+1},\alpha_{m+1}}(q)<\tilde\lambda_{n_{m+1},k_{m+1},\alpha_{m+1}}(p_{m+1})=x$
 for every $q<p_{m+1}$, and so composing with $F_{\beta}$ gives 
 $\tilde G_{\beta,\alpha_{m+1},m+1}(p_{m+1})> \tilde G_{\beta,\alpha_{m+1},m+1}(q)$ for all $q<p_{m+1}$.

The proof is completed by Lemma~\ref{lem:MincUpdt}\eqref{Lcra:1}, because
$$
|p_{m+1}-x|\leq  \rho(\tilde\lambda_{n_{m+1},k_{m+1},\alpha_{m+1}},\text{id}_{\R})<\eps_{m+1}/2+\gamma_{m+1}+\alpha_{m+1}\leq \frac{n_{m+1}}{n_{m+1}+k_{m+1}-1}.
$$

\end{proof}

Define $G_{\beta}:\mathbb{S}^1\to \mathbb{S}^1$ by $G_{\beta}:=\lim_{m\to\infty}G_{\beta,\alpha_{m+1},m+1}$, where $\alpha_{m+1}$
are recursively provided by Lemma~\ref{lem:pi} together with points $p_m$ such that
$\lambda_{n_{m+1},k_{m+1},\alpha_{m+1}}(p_{m+1})=p_m$ and $f_\beta(p_0)$
is such that $\tilde f_\beta(x)<\tilde f_\beta(p_0)$ for every $x<p_0$.

Note that in Lemma~\ref{lem:pi} we may freely assign $\eta$ and $\delta$.
In particular, we can take $\eta$
so small, that for all $q<p_m-\frac{1}{2(m+1)}$ we have
$|F_\beta(p_m)-F_\beta(q)|>4\eta$. Since we may also require that $\rho(\lambda_{n_{m+1},k_{m+1},\alpha_{m+1}},\text{id})<\frac{1}{4(m+1)}$, we obtain that 
$
|G_{\beta,\alpha_{m+1},m+1}(p_{m+1})-G_{\beta,\alpha_{m+1},m+1}(q)|>3\eta$ for all $q<p_{m+1}-\frac{1}{m+1}$.
Denote by $\eta_{m+1}$ the constant associated with the construction of $\tilde G_{\beta,\alpha_{m+1},m+1}$.

\begin{lem}\label{lem:limitGbeta}
The limit maps $G_{\beta}$ are well defined and $\underleftarrow{\lim}(\mathbb{S}^1,G_{\beta})$ is the pseudo-circle for every $\beta\in \R$. Furthermore, the following statements hold:
\begin{enumerate}
 \item\label{item1} $\tilde G_{\beta}(t+1/2)=\tilde G_{\beta}(t)+1/2$ for all $t\in \mathbb{R}$.
 \item\label{item2} there exists $p\in\mathbb{R}$ such that $\tilde G_{\beta}(p)\geq \tilde G_{\beta}(q)$ for all $q<p$ and for all $\beta\in \R$.
  Furthermore, if the sequence $\eta_m$ converges sufficiently fast to zero, then  $\tilde G_{\beta}(p)> \tilde G_{\beta}(q)$ for all $q<p$ and for all $\beta\in \R$.
 \end{enumerate}
\end{lem}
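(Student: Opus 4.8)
The plan is to realize $G_\beta$ as the limit of the Cauchy sequence $H_m:=G_{\beta,\alpha_m,m}$ produced by the recursion (so $H_0=f_\beta$, $H_m=H_{m-1}\circ\lambda_{n_m,k_m,\alpha_m}$, and every $H_m$ is admissible and Lebesgue‑measure preserving by Lemma~\ref{lem:LemMTadjusted}), and then to read off each claimed property from the corresponding property of the $H_m$'s. First I would note that Lemma~\ref{lem:LemMTadjusted} lets us run the recursion with $\rho(H_{m+1},H_m)<\eta_{m+1}$ for freely chosen constants $\eta_{m+1},\delta_{m+1}$; imposing $\sum_m\eta_m<\infty$ makes $(H_m)$ Cauchy in the complete space $(C_\lambda(\Ci),\rho)$, so $G_\beta=\lim_m H_m$ is well defined, lies in $C_\lambda(\Ci)$, and has degree one (each $H_m$ being a composition of degree‑one maps, and degree is preserved under uniform limits). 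It is also worth recording at the outset that, since $\tilde f_\beta=\tilde f+\beta$, every $\tilde H_m$ equals its $\beta=0$ version plus the additive constant $\beta$; hence the ``first‑maximum'' data feeding Lemma~\ref{lem:pi} are $\beta$‑independent, the parameters $n_i,k_i,\alpha_i$ and the points $p_m$ may be fixed once and for all, and $\tilde G_\beta=\tilde G_0+\beta$.

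For the pseudo‑circle statement I would transfer crookedness from $H_m$ to $G_\beta$. By Lemma~\ref{lem:LemMTadjusted} there are integers $N_m$ with $H_m^{N_m}$ being $\delta_m$‑crooked, and we take $\delta_m\to 0$. Since $H_m$ is piecewise affine with slope magnitudes bounded by $S_m:=13\prod_{i=1}^{m}(\scr[n_i]+\scr[n_i-1])$ (by Observation~\ref{obs:lambda1}), hence $S_m$‑Lipschitz, the elementary telescoping estimate $\rho(H_m^{N},G_\beta^{N})\le S_m^{N}\rho(H_m,G_\beta)$ gives $\rho(H_m^{N_m},G_\beta^{N_m})\le S_m^{N_m}\sum_{j>m}\eta_j$. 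Because the tail $\sum_{j>m}\eta_j$ is selected only after $S_m,N_m,\delta_m$ are pinned down, we may force this quantity to be $<\delta_m$, and then Lemma~\ref{lem:crooked} yields that $G_\beta^{N_m}$ is $3\delta_m$‑crooked. As $\delta_m\to 0$ this is precisely the hypothesis of Proposition~\ref{prop:pseudo-circle}, so $\underleftarrow{\lim}(\Ci,G_\beta)$ is the pseudo‑circle for every $\beta$. Claim~\eqref{item1} is immediate from Lemma~\ref{lem:symm-half}: each $\tilde H_m$ satisfies $\tilde H_m(t+1/2)=\tilde H_m(t)+1/2$, and this identity passes to the uniform limit $\tilde G_\beta$.

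For Claim~\eqref{item2} I would exploit that $\tilde\lambda_{n_{m+1},k_{m+1},\alpha_{m+1}}(p_{m+1})=p_m$, so $\tilde H_m(p_m)=\tilde H_{m-1}(p_{m-1})=\dots=\tilde f_\beta(p_0)=:v_\beta$ for every $m$, while each $p_m$ is a strict left‑maximum of $\tilde H_m$ by Lemma~\ref{lem:pi}. Since $|p_{m+1}-p_m|\le\rho(\lambda_{n_{m+1},k_{m+1},\alpha_{m+1}},\id)$, and enlarging $k_{m+1}$ makes this as small as we wish without altering the slope $S_{m+1}$, we may assume $\sum_m|p_{m+1}-p_m|<\infty$ and moreover $S_m\sum_{j\ge m}|p_{j+1}-p_j|\to 0$; then $(p_m)$ converges to a $\beta$‑independent point $p$ with $|\tilde H_m(p)-v_\beta|\le S_m|p-p_m|\to 0$, so $\tilde G_\beta(p)=v_\beta$. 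For $q<p$, choosing $m$ large with $p_m>q$ gives $\tilde H_m(q)\le\tilde H_m(p_m)=v_\beta$, and letting $m\to\infty$ gives $\tilde G_\beta(q)\le v_\beta=\tilde G_\beta(p)$, which is the non‑strict assertion for all $\beta$. For the strict assertion, recall that the construction also arranges $\tilde H_m(p_m)-\tilde H_m(q)>3\eta_m$ for all $q<p_m-1/m$; if the $\eta_m$ are chosen so fast that $\sum_{j>m}\eta_j<\eta_m$, then for any $q<p$ we pick $m$ with $q<p_m-1/m$ (possible since $p_m-1/m\to p$) and estimate $\tilde G_\beta(q)\le\tilde H_m(q)+\sum_{j>m}\eta_j<v_\beta-3\eta_m+\eta_m<v_\beta=\tilde G_\beta(p)$.

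The principal obstacle is that the Lipschitz constants $S_m$ and the crookedness exponents $N_m$ blow up, so $G_\beta$ itself is not Lipschitz and one cannot compare $G_\beta^{N_m}$ with $H_m^{N_m}$ cheaply. Everything therefore hinges on a scheduling argument: at stage $m$ the quantities $S_m,N_m,\delta_m,\eta_m$ are already determined, whereas the still‑free data — the constants $\eta_{m+1},\eta_{m+2},\dots$ and the perturbation sizes $\rho(\lambda_j,\id)$ for $j>m$ (tunable through $k_j$) — can be driven to $0$ fast enough to guarantee simultaneously $S_m^{N_m}\sum_{j>m}\eta_j<\delta_m$, $S_m\sum_{j>m}|p_{j+1}-p_j|\to 0$, and $\sum_{j>m}\eta_j<\eta_m$; this is exactly what ``$\eta_m$ converges sufficiently fast'' is meant to encode. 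The remaining points (that $G_\beta$ has degree one and lies in $C_\lambda(\Ci)$) are automatic, both properties being closed under uniform limits.
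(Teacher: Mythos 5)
Your proposal is correct and follows essentially the same route as the paper: build $G_\beta$ as the uniform limit of the Cauchy recursion supplied by Lemma~\ref{lem:LemMTadjusted}, transfer $\delta$-crookedness to the limit by scheduling the tail $\sum_{j>m}\eta_j$ against the known Lipschitz constants and crookedness exponents of stage~$m$ (an argument the paper compresses into the phrase ``we can assume that \ldots $G^{n(k)}_{\beta,\alpha_i,i}$ is $(2^{-k}-2^{-k-i})$-crooked for $k\le i$'' together with Lemma~\ref{lem:crooked} and Proposition~\ref{prop:pseudo-circle}), pass the half-period symmetry of Lemma~\ref{lem:symm-half} through the uniform limit, and obtain the strict left-maximum at $p=\lim p_m$ from the $3\eta_m$-separation built into the construction. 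The only small deviation is in deducing $\tilde G_\beta(p)=\tilde G_{\beta,\alpha_m,m}(p_m)$ for the limit point: you impose an extra scheduling condition $S_m|p-p_m|\to 0$ so as to compare $\tilde G_{\beta,\alpha_m,m}$ at $p$ and at $p_m$, whereas the paper avoids any Lipschitz bookkeeping here by evaluating $G_\beta$ (whose continuity is automatic) at $p_m$ rather than $G_{\beta,\alpha_m,m}$ at $p$ and using uniform convergence—but your version is also valid since $k_{m+1}$ (hence $|p_{m+1}-p_m|$) can be tuned after $S_m$ is known.
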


\begin{proof}
First observe that we can take a sequence of maps $G_{\beta,\alpha_{m+1},m+1}$ such that for each positive integer $i$ we have $\rho(G_{\beta,\alpha_{i},i},G_{\beta,\alpha_{i+1},i+1})<2^{-i}$. Thus it follows that the sequence of maps $G_{\beta,\alpha_{m+1},m+1}$ converges uniformly and thus $G_{\beta}$ exists. Furthermore, we can assume that maps $G_{\beta,\alpha_{m+1},m+1}$ and sequences $n(1),n(2),\ldots $ are such that $G^{n(k)}_{\beta,\alpha_{i},i}$ is $(2^{-k}-2^{-k-i})$-crooked for each positive integer $k\leq i$ and furthermore that every $G_{\beta,\alpha_{i},i}$ is admissible (this follows already from the construction).
Observe that provided $(f_i)_{i\geq 1}$ is a sequence of circle maps that converge uniformly to $f$ and are all at most $\delta$-crooked, then $f$ is also $\delta$-crooked.
Now applying Proposition~\ref{prop:pseudo-circle}, Lemma~\ref{lem:crooked} and the last statement we get that $G^{n(k)}_{\beta}$ is $2^{-k}$-crooked for each positive integer $k$ which shows that $\underleftarrow{\lim}(\mathbb{S}^1,G_{\beta})$ is the pseudo-circle for every $\beta\in \R$.
 The statement \eqref{item1} follows directly from Lemma~\ref{lem:symm-half}.

By Lemma~\ref{lem:pi} the sequence $p_m$ is convergent and $p:=\lim_{m\to\infty}p_m$ exists.
Fix any $q<p$ and take $M\in \N$ such that
$p_m>q$ for every $m>M$.
Then by definition of $p_m$ we have that $\tilde G_{\beta,\alpha_{m},m}(q)<\tilde G_{\beta,\alpha_{m},m}(p_m)$ and so passing to the limit we get
$\tilde G_{\beta}(q)\leq \tilde G_{\beta}(p)$. Let us assume towards the contradiction that 
$\tilde G_{\beta}(q)= \tilde G_{\beta}(p)$. Suppose wlog that $|q-p|>4/M$. Then $p_m-1/m>q$ for every $m>M$
and if $\sum_{m\geq M}\eta_m<2\eta_M$
then
$$
|G_{\beta,\alpha_M,M}(p_M)-G_{\beta,\alpha_M,M}(q)|\geq
|G_{\beta,\alpha_m,m}(p_{m})-G_{\beta,\alpha_M,M}(q)|>3\eta_M
$$
while by telescoping we get
$$
|G_{\beta,\alpha_M,M}(q)-G_{\beta,\alpha_m,m}(q)|\leq \sum_{m\geq M}\eta_m<2\eta_M.
$$
Finally we get that $|G_{\beta,\alpha_m,m}(p_m)-G_{\beta,\alpha_m,m}(q)|
>\eta_M$.  
This completes the proof.
\end{proof}

\begin{lem}\label{lem:periodic} If $\beta=p+1/2-G_0(p)$
then $G_\beta(p)=p+1/2$.
\end{lem}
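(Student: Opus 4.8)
The plan is to observe that the parameter $\beta$ enters the construction only through a rotation placed in front of everything, so that the whole family is obtained from $G_0$ by post‑composition with $r_\beta$: concretely $G_\beta=r_\beta\circ G_0$, equivalently $\tilde G_\beta=\tilde G_0+\beta$ on the level of the compatibly normalised lifts. Granting this, the lemma is a one‑line substitution: with $p$ the point of Lemma~\ref{lem:limitGbeta}\eqref{item2} and $\beta:=p+1/2-\tilde G_0(p)$ we get $\tilde G_\beta(p)=\tilde G_0(p)+\beta=p+1/2$, hence $G_\beta(p)=p+1/2$ in $\Ci$. (Here I read $G_0(p)$ as the value $\tilde G_0(p)$ of the lift, matching the notation of Lemma~\ref{lem:pi} and Lemma~\ref{lem:limitGbeta}; if instead $G_0(p)$ is read as a point of $\Ci$, then $\beta$ is determined only $\pmod 1$ and the conclusion holds in $\Ci$, which is all that is needed.)

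So the substantive step is $\tilde G_\beta=\tilde G_0+\beta$. First I would recall the bookkeeping in the definition of $G_\beta$: at stage $m$ the map is $G_{\beta,\alpha_m,m}=f_\beta\circ\lambda_{n_1,k_1,\alpha_1}\circ\dots\circ\lambda_{n_m,k_m,\alpha_m}$, with the sequences $(n_i),(k_i),(\alpha_i)$ and the points $(p_i)$ chosen recursively by Lemmas~\ref{lem:LemMTadjusted} and~\ref{lem:pi}. Since $f_\beta=r_\beta\circ f$ and these data are fixed once and for all — the two lemmas, and the accuracy requirements on the $\eta_m$, are stated for every $\beta\in\R$ simultaneously, and the point $p$ is explicitly common to all $\beta$ in Lemma~\ref{lem:limitGbeta}\eqref{item2} — we have, for every $m$,
$$G_{\beta,\alpha_m,m}=r_\beta\circ\bigl(f\circ\lambda_{n_1,k_1,\alpha_1}\circ\dots\circ\lambda_{n_m,k_m,\alpha_m}\bigr)=r_\beta\circ G_{0,\alpha_m,m},$$
hence $\tilde G_{\beta,\alpha_m,m}=\tilde G_{0,\alpha_m,m}+\beta$. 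Passing to the uniform limit in $m$ (which exists by the construction) and using continuity of $r_\beta$ yields $G_\beta=r_\beta\circ G_0$ and $\tilde G_\beta=\tilde G_0+\beta$, and then the substitution above finishes the proof.

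The only point requiring care — and the main (indeed only) obstacle — is the claim that the recursively chosen data $(\alpha_i,p_i,n_i,k_i)$, and consequently $p$, genuinely do not depend on $\beta$. This is exactly what the uniform‑in‑$\beta$ formulations were engineered to give: the defining condition for $p_0$, the running‑maximum condition selecting $p_{m+1}$ in Lemma~\ref{lem:pi}, and the estimates controlling $\eta_m$ all involve $\tilde G_{\beta,\,\cdot\,,\,\cdot}=\tilde G_{0,\,\cdot\,,\,\cdot}+\beta$ only through \emph{differences} of values, so $\beta$ cancels. A fully self‑contained alternative, avoiding any discussion of equivariance of the whole construction, is to track only the orbit of $p$: since $\tilde\lambda_{n_{m+1},k_{m+1},\alpha_{m+1}}(p_{m+1})=p_m$ for all $m\ge0$ and $G_{\beta,\alpha_{m+1},m+1}=G_{\beta,\alpha_m,m}\circ\lambda_{n_{m+1},k_{m+1},\alpha_{m+1}}$, telescoping gives $\tilde G_{\beta,\alpha_m,m}(p_m)=\tilde f_\beta(p_0)=\tilde f(p_0)+\beta$ for every $m$; letting $m\to\infty$ (uniform convergence and $p_m\to p$) gives $\tilde G_\beta(p)=\tilde f(p_0)+\beta$, and the case $\beta=0$ identifies $\tilde f(p_0)=\tilde G_0(p)$, so again $\tilde G_\beta(p)=\tilde G_0(p)+\beta$ and the conclusion follows.
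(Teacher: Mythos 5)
Your proof is correct and follows the same route as the paper: it hinges on the identity $G_\beta = r_\beta \circ G_0$, after which the conclusion is a one-line substitution $G_\beta(p)=G_0(p)+\beta=p+1/2$. You are more careful than the paper in justifying that identity (tracking that the recursively chosen data $(\alpha_i,p_i,n_i,k_i)$ are $\beta$-independent because the defining conditions involve only differences of values of the lifts, and offering a telescoping alternative along the orbit of $p$), but the key step is the same.
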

\begin{proof}
By the definition of $f_\beta$ we obtain that $G_\beta=r_\beta \circ G_0$. But this immediately gives
$$
G_\beta(p)=G_0(p)+\beta=p+1/2
$$
completing the proof.
\end{proof}

{

\begin{thm}\label{thm:main}
There exists a parameterized family of maps $\{G_{\beta}\}_{\beta\in\R}\subset \mathcal{T}\subset C_{\lambda}(\mathbb{S}^1)$ varying continuously  with $\beta$ such that:
\begin{enumerate}
\item\label{main1} The inverse limit $\underleftarrow{\lim} (\mathbb{S}^1,G_{\beta})$ is the pseudo-circle for every $\beta$.
 \item\label{main2} $\tilde G_{\beta}(t+1/2)=\tilde G_{\beta}(t)+1/2$ for all $t\in \mathbb{R}$.
 \item\label{main3} there exists $p\in\mathbb{R}$ such that $\tilde G_{\beta}(p)> \tilde G_{\beta}(q)$ for all $q<p$ and for all $\beta$.
  \item\label{main4} $G_{\beta}$ is leo.
 \end{enumerate}
\end{thm}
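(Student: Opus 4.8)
The plan is to derive Theorem~\ref{thm:main} almost entirely from Lemma~\ref{lem:limitGbeta}, the only point needing a separate argument being that each limit map $G_\beta$ is leo. First I would record that the recursively chosen data $(n_m,k_m,\alpha_m,p_m)$ may be taken independent of $\beta$: since $f_\beta=r_\beta\circ f$ merely shifts every lift by the constant $\beta$, at each level of the construction $\tilde F_\beta=\tilde F_0+\beta$ and $\tilde G_{\beta,\alpha,m}=\tilde G_{0,\alpha,m}+\beta$, so the running-maximum condition fed into Lemma~\ref{lem:pi} (``$\tilde F_\beta(x)>\tilde F_\beta(y)$ for all $y<x$'') and the base point $p_0$ are unchanged by $\beta$. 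Consequently $G_{\beta,\alpha_m,m}=r_\beta\circ G_{0,\alpha_m,m}$ for every $m,\beta$, and because $\rho(G_{\beta,\alpha_m,m},G_\beta)=\rho(G_{0,\alpha_m,m},G_0)$ the convergence to the (well-defined, by Lemma~\ref{lem:limitGbeta}) limit is uniform in $\beta$ and $G_\beta=r_\beta\circ G_0$. In particular $\beta\mapsto G_\beta$ is $1$-Lipschitz, so the family varies continuously with $\beta$, and $G_\beta\in C_\lambda(\mathbb{S}^1)$ since the measure-preserving property passes to uniform limits (equivalently, $(C_\lambda(\mathbb{S}^1),\rho)$ is complete).

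Next, choosing the free parameters $\eta=\eta_m$ and $\delta=\delta_m$ in Lemma~\ref{lem:LemMTadjusted}/Lemma~\ref{lem:pi} so that the hypotheses of Lemma~\ref{lem:limitGbeta} are met (in particular $\eta_m\to0$ fast enough for its item~\eqref{item2}, and the crookedness exponents telescope as in the paragraph preceding that lemma), Lemma~\ref{lem:limitGbeta} yields directly: $\underleftarrow{\lim}(\mathbb{S}^1,G_\beta)$ is the pseudo-circle (item~\eqref{main1}); $\tilde G_\beta(t+1/2)=\tilde G_\beta(t)+1/2$ for all $t$ (item~\eqref{main2}); and a single $p\in\mathbb{R}$ with $\tilde G_\beta(p)>\tilde G_\beta(q)$ for all $q<p$ and all $\beta$ (item~\eqref{main3}). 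Moreover $G_\beta\in\mathcal{T}$: Lemma~\ref{lem:limitGbeta} gives that $G_\beta^{n(k)}$ is $2^{-k}$-crooked for every $k$, so for each $\delta>0$ some power of $G_\beta$ is $\delta$-crooked, which is exactly the defining property of $\mathcal{T}$.

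It remains to prove item~\eqref{main4}, and this is the step I expect to be the main obstacle, since ``leo'' is not preserved by arbitrary uniform limits; the remedy is to extract a leo modulus for the approximants that does not depend on $m$. The proof of Claim~\ref{clm:uniqueN} uses only that $G_{\beta,\alpha_m,m}$ is a composition of the non-contracting maps $\lambda_{n_i,k_i,\alpha_i}$ (Lemma~\ref{lem:MincUpdt}\eqref{Lcra:3}) followed by $f_\beta$, that $|\tilde f_\beta'|\ge4$ off the fixed critical set, and the block structure of $f_\beta$ from Lemma~\ref{lem:fbetaleo}; hence for every $\xi>0$ there is an integer $N(\xi)$, depending only on $\xi$ (take $M$ with $4^M\xi>1$ and add the fixed constant coming from Lemma~\ref{lem:fbetaleo}), such that $G_{\beta,\alpha_m,m}^{N(\xi)}(A)=\mathbb{S}^1$ for every arc $A\subset\mathbb{S}^1$ with $\lambda(A)>\xi$, uniformly in $m$ and $\beta$. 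Now fix an open arc $J\subset\mathbb{S}^1$, choose a nondegenerate closed subarc $K\subset J$, and put $\xi:=\lambda(K)/2$, $N:=N(\xi)$. Then $G_{\beta,\alpha_m,m}^{N}(K)=\mathbb{S}^1$ for every $m$; since $G_{\beta,\alpha_m,m}\to G_\beta$ uniformly we also have $G_{\beta,\alpha_m,m}^{N}\to G_\beta^{N}$ uniformly, hence $G_{\beta,\alpha_m,m}^{N}(K)\to G_\beta^{N}(K)$ in the Hausdorff metric, so $G_\beta^{N}(K)=\mathbb{S}^1$ (a Hausdorff limit of the constant sequence $\mathbb{S}^1$, and $G_\beta^{N}(K)$ is closed). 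Therefore $G_\beta^{N}(J)\supseteq G_\beta^{N}(K)=\mathbb{S}^1$, so $G_\beta$ is leo, which finishes the proof. The only care needed is verifying that the bound in Claim~\ref{clm:uniqueN} is genuinely level-independent, which is immediate from its short proof.
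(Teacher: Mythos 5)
Your proof is correct, and for items~(1)--(3) it follows exactly the same path as the paper: everything is read off from Lemma~\ref{lem:limitGbeta}, with your preliminary remark that the construction may be run with $\beta$-independent data so that $G_\beta = r_\beta\circ G_0$ (which the paper states only in Lemma~\ref{lem:periodic}) making the Lipschitz continuity in $\beta$ and condition~(\ref{main3}) entirely explicit. The interesting divergence is in item~(4). The paper proves \emph{transitivity} of $G_\beta$ (by showing each $G_{\beta,\alpha_m,m}$ has a leo modulus uniform in $\beta$ for a given $m$, then passing to a small Hausdorff error under fast convergence) and then invokes the external result \cite[Corollary~21]{KOT} that a transitive degree-one circle map whose inverse limit is the pseudo-circle is automatically leo. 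You instead observe that the constant $N(\xi)$ produced by Claim~\ref{clm:uniqueN} is level-independent --- the proof there yields $N = M + 8$ with $4^{M}\xi > 1$, and this uses only the non-shrinking property of the $\lambda_{n_i,k_i,\alpha_i}$ (Lemma~\ref{lem:MincUpdt}\eqref{Lcra:3}) and the fixed slope and block structure of $f_\beta$ --- so $G_{\beta,\alpha_m,m}^{N(\xi)}(K)=\mathbb{S}^1$ holds with the \emph{same} $N(\xi)$ for all $m$. You then pass this equality directly to the limit, using that uniform convergence $G_{\beta,\alpha_m,m}\to G_\beta$ forces uniform convergence of $N$-fold iterates and therefore Hausdorff convergence $G_{\beta,\alpha_m,m}^{N}(K)\to G_\beta^{N}(K)$, so the limit must equal $\mathbb{S}^1$. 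This gives a direct, self-contained proof of leo that avoids the ``transitivity $+$ pseudo-circle $\Rightarrow$ leo'' bridging theorem; the trade-off is that you need the slightly stronger observation that $N(\xi)$ is uniform in the construction level $m$ (not just in $\beta$), which you correctly justify, whereas the paper only needs per-level uniformity in $\beta$ plus the citation. Both proofs are valid; yours is arguably the more elementary and transparent route.
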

\begin{proof}
Since the family of maps $\{f_{\beta}\}_{\beta\in\R}$ varies continuously, and  we compose all of them with the same family of perturbations, continuity of family $\{G_\beta\}_{\beta\in \R}$ with respect to the parameter $\beta$ is obvious. We get the result if we combine Lemma~\ref{lem:limitGbeta} with the proof of Theorem~\ref{thm:UniLimPresLeb}; namely, to get crookedness and admissibility on every step we repeatedly use Lemma~\ref{lem:LemMTadjusted}.
Recall that for any $k\geq 0$ the set $A_k\subset C_\lambda(I)$ is contained in the set of maps $f$ such that $f^{s}$ is $(1/k-\delta)$-crooked for some $s$ and some sufficiently small $\delta>0$. Starting with $\{f_{\beta}\}_{\beta\in\R}$ we use Lemma~\ref{lem:LemMTadjusted} directly to obtain maps $\{{G}_{\beta,\alpha_{m},m}\}_{\beta\in \R}\subset A_1$. But if $\{{G}_{\beta,\alpha_{m},m}\}_{\beta\in \R}\subset A_1$ and $m,\delta$ are constants from the definition of $A_1$, then by Lemma~\ref{lem:crooked} we have $B(\{{G}_{\beta,\alpha_{m},m}\}_{\beta\in \R},\delta/4)\subset A_1$. For the second step we take the family $\{{G}_{\beta,\alpha_{m},m}\}_{\beta\in \R}$. Proceeding as in the rest of the proof of Theorem~\ref{thm:UniLimPresLeb}, ensuring sufficiently fast convergence, we obtain in the intersection of sets $A_k$ the family $\{G_{\beta}\}_{\beta\in \R}\subset \mathcal{T}$ of continuous maps varying with $t$.\\
To get \eqref{main4} first note that it suffices to show $G_{\beta}$ is transitive since it was shown in \cite[Corollary 21]{KOT} that every transitive circle map that gives pseudo-circle in the inverse limit (as single bonding map) is actually leo.

Fix $m$ and note that by Lemma~\ref{lem:LemMTadjusted} each map $\{{G}_{\beta,\alpha_{m},m}\}_{\beta\in \R}$ is leo for any $\beta\in \R$.
For a fixed $\beta$ and any interval $J$ of length at least $1/m$
there is $k$ such that ${G}_{\beta,\alpha_{m},m}^k(J)=\mathbb{S}^1$. In fact, if we check the proof of  Lemma~\ref{lem:LemMTadjusted} (see Claim~\ref{clm:uniqueN} and the arguments thereafter), we see that this $k$ depends only on slopes of ${G}_{\beta,\alpha_{m},m}$ and the number of pieces of monotonicity rather than $\beta$, that is we can find one $k$ good for all maps ${G}_{\beta,\alpha_{m},m}$ and all intervals $J$. But then, if $g$ is sufficiently small perturbation of any  ${G}_{\beta,\alpha_{m},m}$ then $d_H(g^k(J),{G}_{\beta,\alpha_{m},m}^k(J))<1/m$, meaning $g^k$ covers whole $\mathbb{S}^1$ except possibly an interval of length at most $1/m$. So transitivity is ensured by sufficiently fast convergence of ${G}_{\beta,\alpha_{m},m}$ to $G_\beta$, which we can control in the construction process of ${G}_{\beta,\alpha_{m+1},m+1}$ from ${G}_{\beta,\alpha_{m},m}$.
\end{proof}

Now let us briefly describe the BBM construction for the family of circle maps $\{G_{\beta}\}_{\beta\in\R}$. 
  Let us consider the annulus $\mathbb{A}=[0,1)\times [-k-3,k+3]$ where $k>\sup_{t\in [0,1)} G_{\beta}(t)$ and $-k>\inf_{t\in [0,1)} G_{\beta}(t)$ (see Figure~\ref{fig:lifting}). We will describe the particular way in which we decompose $[0,1)\times [-k-1,k+1]$ into a family of continuously varying arcs and how maps $G_{\beta}$ extend to homeomorphisms $\bar{G}_{\beta}$ on $[0,1)\times [-k-1,k+1]$. 
  We partition $\mathbb{A}$ into lines of slope $1$;  $\bar G_{\beta}$ will preserve these lines. For each such line we will define $\bar G_{\beta}$ on finitely many points and extend linearly between them on each such line. First of all we put $\bar G_{\beta}(x,k+1)=(x,k+1)$ and $\bar G_{\beta}(x,-k-1)=(x,-k-1)$, that is, at start we keep the map identity on the boundary. Later in the construction we will change the map on the boundary to a rotation.
We map points of the middle circle in the following way:
$$(x,0)\mapsto (G_{\beta}(x)\pmod 1, G_{\beta}(x)-x), \forall x\in [0,1),$$
and also define it on vertical lines
$$
(p,t) \mapsto
\begin{cases}
(G_{\beta}(p)\pmod 1, G_{\beta}(p)-p+t); t\in [0,k-G_{\beta}(p)+p],\\
(G_{\beta}(p)-s(G_{\beta}(p)-p) \pmod 1,k+s); t\in  [k-G_{\beta}(p)+p,1],
\end{cases}
$$
where $s=\frac{t-k+G_{\beta}(p)-p}{1+G_{\beta}(p)-p}$ and

$$
\Big(p+\frac{1}{2},t \Big) \mapsto \Big(p_1(\bar{G}_{\beta}(p,t))+\frac{1}{2} \pmod 1,p_2(\bar{G}_{\beta}(p,t))\Big),
$$
where $p_1$ and $p_2$ are natural coordinate projections to $[0,1)$ and $[-k-1,k+1]$ respectively. Note that each line from the partition has finite intersection with sets where $\bar G_{\beta}$ is defined up to now, see Figure~\ref{fig:lifting}.
Now extend the map in the linear fashion on the rest of $[0,1)\times [-k-1,k+1]$.\\
First note that the images on second coordinates from the definition above are always smaller than $k$ and that $G_{\beta}(p)-p+t=k+s$ for $t=k-G_{\beta}(p)+p$ and thus $\bar{G}_{\beta}$ is well defined and continuous for every $\beta\in \R$. 
Let us perform the last modification on the values of points under $\bar{G}_{\beta}$. 
Let us take $(x,t)$ in the range of $\bar{G}_{\beta}$. If $t<k$ then we keep the same value and if $t=k+s$ for some $s\in [0,1]$, then we rotate $x$ by $s(G_{\beta}(p)-p)$. By this modification the red lines on Figure~\ref{fig:lifting} became straight vertical lines.
At the same time the image of $[0,1)\times\{0\}$ under $\bar{G}_{\beta}$ is not affected. Now it is time to extend $\bar G_{\beta}$ to whole $\mathbb{A}$. We will put $\bar{G}_{\beta}(x,t)=(x+G_{\beta}(p)-p \pmod 1,t)$ 
for $t>k+1$ and $\bar{G}_{\beta}(x,t)=(x,t)$ 
for $t<-k-1$.

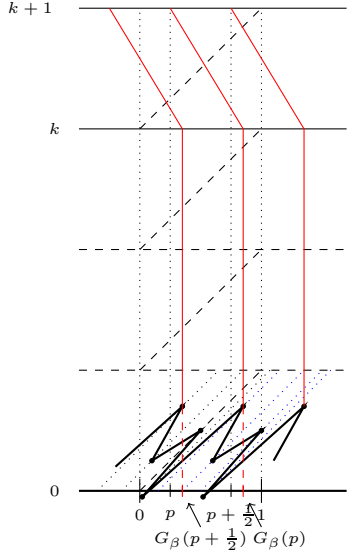
\begin{figure}[ht]
\centering
\begin{tikzpicture}[scale=1.6]
\draw[thick](-0.5,0)--(1.7,0);
\draw[dashed](-0.5,1)--(1.7,1);
\draw[dashed](-0.5,2)--(1.7,2);
\draw(-0.5,3)--(1.7,3);
\draw(-0.5,4)--(1.7,4);
\draw(0,-0.1)--(0,0.1);
\draw(1,-0.1)--(1,0.1);
\draw(0.25,-0.05)--(0.25,0.05);
\draw(0.75,-0.05)--(0.75,0.05);
\draw[dotted](0,0)--(0,4);
\draw[dotted](1,0)--(1,4);
\draw[dotted](0.25,0)--(0.25,4);
\draw[dotted](0.75,0)--(0.75,4);
\node at (0,-0.2){\tiny $0$};
\node at (1,-0.2){\tiny $1$};
\node at (-0.7,3){\tiny $k$};
\node at (-0.7,0){\tiny $0$};
\node at (-0.9,4){\tiny $k+1$};
\node at (0.25,-0.2){\tiny $p$};
\node at (0.75,-0.2){\tiny $p+\frac{1}{2}$};
\draw[dashed](0,0)--(1,1);
\draw[dashed](0,1)--(1,2);
\draw[dashed](0,2)--(1,3);
\draw[dashed](0,3)--(1,4);
\draw[fill=black] (0.35,0.7) circle (.12ex);
\draw[fill=black] (0.85,0.7) circle (.12ex);
\draw[fill=black] (0.1,0.25) circle (.12ex);
\draw[fill=black] (0.02,-0.05) circle (.12ex);
\draw[fill=black] (0.5,0.5) circle (.12ex);
\draw[fill=black] (1,0.5) circle (.12ex);
\draw[fill=black] (0.6,0.25) circle (.12ex);
\draw[fill=black] (0.52,-0.05) circle (.12ex);
\draw[fill=black] (1.35,0.7) circle (.12ex);
\draw[dotted](-0.35,0)--(0.65,1);
\draw[dotted,blue](0.15,0)--(1.15,1);
\draw[dotted](-0.15,0)--(0.85,1);
\draw[dotted,blue](0.35,0)--(1.35,1);
\draw[dotted](0.02,-0.05)--(1.07,1);
\draw[dotted, blue](0.52,-0.05)--(1.57,1);
\draw[dotted,blue](0.5,0)--(1.5,1);
\draw[thick](-0.2,0.2)--(0.35,0.7)--(0.1,0.25)--(0.5,0.5)--(0.02,-0.05)--(0.85,0.7)--(0.6,0.25)--(1,0.5)--(0.52,-0.05)--(1.35,0.7)--(1.1,0.25);
\draw[red](0.85,0.7)--(0.85,3)--(0.25,4);
\draw[red](0.35,0.7)--(0.35,3)--(-0.25,4);
\draw[red](1.35,0.7)--(1.35,3)--(0.75,4);
\draw[red,dashed](0.35,0.7)--(0.35,0);
\draw[red,dashed](0.85,0.7)--(0.85,0);
\draw[red](0.35,-0.05)--(0.35,0.05);
\draw[red](0.85,-0.05)--(0.85,0.05);
\draw[->] (0.5,-0.3)--(0.4,-0.1);
\node at (0.5,-0.4){\tiny $G_{\beta}(p+\frac{1}{2})$};
\draw[->] (1,-0.3)--(0.9,-0.1);
\node at (1.15,-0.41){\tiny $G_{\beta}(p)$};
\end{tikzpicture}
\caption{Sketch of the graph of $\bar{G}_{\beta}$ in the first step of the construction.}\label{fig:lifting}
\end{figure}

We can then associate a {\em retraction} for every $x\in \partial \mathbb{A}$ corresponding to the given decomposition. This map will be a boundary retract, but we need to keep the annulus, so we will collapse the ``middle annulus'' only to the ``inner circle'' of it (see definition of $R$ below).
Recall also, that a continuous map between two compact metric spaces is called a \emph{near-homeomorphism}, if it is a uniform limit of homeomorphisms.

The \emph{smash} $R:\mathbb{A}\to \mathbb{A}$ is a near-homeomorphism so that:
$$
R((x,t))=
\begin{cases}
(x,0); & |t|\leq k+2,\\
(x,(k+3)(t-k-2)); & t>k+2,\\
(x,(k+3)(t+k+2)); & t<-k-2.
\end{cases}
$$

 We define the \emph{unwrapping} of $\{G_{\beta}\}_{\beta\in \R}\subset \mathcal{T}$ as a continuously varying family $\bar G_{\beta}\colon \mathbb{A}\to \mathbb{A}$ of orientation-preserving homeomorphisms so that for all $\beta\in \R$:
\begin{enumerate}[(i)]
\item\label{unwrapping:i} if we denote $R\circ \bar{G}_{\beta}(x,t)=(y,s)$ for some $|t|<k+3$ then $|t|>|s|$.  
\item\label{unwrapping:ii} $R\circ \bar{G}_{\beta}|_{[0,1)\times\{0\}}= G_{\beta}$.
\end{enumerate}

Part \eqref{unwrapping:i} is in fact stronger. By our construction, if $t=k+2+r$ then $s=r(k+2)+r$, so under iteration of $R\circ \bar{G}_{\beta}$ each point from strip $[0,1)\times (k+2,k+3)$ has to enter $[0,1)\times [-k-2,k+2]$, and then in next iteration is mapped onto $[0,1)\times\{0\}$. The same argument applies to the strip $[0,1)\times (-k-3,-k-2)$.

We will later show that if $p$ is a fixed point of $G_\beta$ then the radial line emerging from $p$ is invariant under $\bar G_\beta$ and if $p$ is $2$-periodic (i.e. $G_\beta(p)=p+1/2$) then $\bar G_\beta$ interchanges radial lines emerging from $p$ and $p+1/2$.

Set $H_{\beta}:=R\circ \bar{G}_{\beta}$ which is a near-homeomorphism. By Brown's theorem \cite{Br}, $\hat{\mathbb{A}}_{\beta}:=\underleftarrow{\lim}(\mathbb{A},H_{\beta})$ is an annulus as well, thus there exists a homeomorphism $h_{\beta}\colon \hat{\mathbb{A}}_{\beta}\to \mathbb{A}$. Let $\Phi_{\beta}:=h_{\beta}\circ \hat {H}_{\beta}\circ h_{\beta}^{-1}\colon \mathbb{A}\to \mathbb{A}$, where $\hat H_{\beta}$ is a natural extension of $H_{\beta}$ on $\hat{\mathbb{A}}_{\beta}$ and let $\Lambda_{\beta}:=h_{\beta}(\hat S_{\beta})$ where we define $\hat S_\beta:=\underleftarrow{\lim}(\mathbb{S}^1,H_{\beta})$. It follows from \cite{BM} that $\Phi_{\beta}|_{\Lambda_{\beta}}$ is topologically conjugate to $\hat{G}_{\beta}: \underleftarrow{\lim} (\mathbb{S}^1, G_{\beta}) \to \underleftarrow{\lim} (\mathbb{S}^1, G_{\beta})$ for every $\beta\in \R$.  Moreover, it follows from the choice of unwrapping that every point from the interior of $\hat {\mathbb{A}}_{\beta}$ is attracted to $\hat S_{\beta}$, therefore, $\hat S_{\beta}$ is a global attractor for $\hat{H}_{\beta}$ and thus $\Lambda_{\beta}$ is a global attractor for $\Phi_{\beta}$ as well. By Theorem 3.1 from \cite{3G-BM} $\{\Phi_{\beta}\}_{\beta\in \R}$ vary continuously with $\beta\in \R$ and the attractors $\{\Lambda_{\beta}\}_{\beta\in \R}$ vary continuously in the Hausdorff metric.

Now let us consider $\mathbb{A}$ embedded in a topological disk $D\subset \mathbb{R}^2$ such that $\partial D\subset \partial \mathbb{A}$.
To a non-degenerate and non-separating continuum $K\subset D\setminus \partial D$ we can associate the {\em circle of prime ends} $\mathbb{P}$ as the compactification of $D\setminus K$.
If $h\colon\mathbb{R}^2\to \mathbb{R}^2$ preserves orientation and $h(K)=K$, $h(D)=D$ then $h$ induces an orientation preserving homeomorphism $\Omega\colon\mathbb{P}\to\mathbb{P}$, and therefore it gives a natural \emph{prime ends rotation number}. 
In what follows we will also need the following result of Barge~\cite{Barge} where $K$ will denote cofrontiers $\Lambda_{\beta}\subset \mathbb{R}^2$ together with their bounded component. This will make $K$ a plane non-separating continuum. We can naturally extend homeomorphisms $\Phi_{\beta}$ to the continuous family of homeomorphisms of the plane and keep rotations on the boundaries of $\mathbb{A}\subset \mathbb{R}^2$.

\begin{lem}[Proposition 2.2 in \cite{Barge}]\label{Barge}
	Suppose that $\{\Psi_t\}_{t\in[0,1]}$ is a parameterized family of orientation-preserving  homeomorphisms on a topological disk $D\subset \mathbb{R}^2$ continuously varying with $t$. 
	For every $t\in[0,1]$ let $K_t \subset \Int{D}$ be a 
	non-degenerate sphere non-separating continuum, invariant under $\Psi_t$,
	and assume that $\{K_t\}_{t\in[0,1]}$ vary continuously with $t$ in the Hausdorff metric. 
	Then the prime ends rotation numbers vary continuously with $t\in [0,1]$.   	
\end{lem}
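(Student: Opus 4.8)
Since the statement is \cite[Proposition~2.2]{Barge}, in the paper it can simply be imported, but here is the plan I would follow to prove it. The idea is to realise all the circles of prime ends $\mathbb{P}_t$ simultaneously through Riemann uniformizations of the complementary domains, show that the induced circle homeomorphisms vary continuously in the uniform topology, and then invoke the classical fact that the rotation number $\rho\colon\mathrm{Homeo}^{+}(\mathbb{S}^{1})\to\mathbb{R}/\mathbb{Z}$ is continuous.

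First I would set up the uniformizations. Since $\partial D$ is a fixed Jordan curve disjoint from every $K_{t}$, the set $W_{t}:=\widehat{\mathbb{C}}\setminus K_{t}$ is for each $t$ a simply connected domain (as $K_{t}$ is non-degenerate and does not separate the sphere) containing a common point $z_{\star}\in\partial D$. Let $\varphi_{t}\colon\mathbb{D}\to W_{t}$ be the Riemann map normalised by $\varphi_{t}(0)=z_{\star}$ and $\varphi_{t}'(0)>0$. By Carath\'eodory's theory of prime ends, $\varphi_{t}$ extends to a homeomorphism $\widehat{\varphi}_{t}$ of $\overline{\mathbb{D}}$ onto the prime end compactification $W_{t}\sqcup\mathbb{P}_{t}$, carrying $\partial\mathbb{D}$ onto $\mathbb{P}_{t}$. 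As $\Psi_{t}$ restricts to an orientation-preserving homeomorphism of $W_{t}$ fixing $K_{t}$, it induces the homeomorphism $\Omega_{t}$ of $\mathbb{P}_{t}$; after post-composing $\widehat{\varphi}_{t}$ with the M\"obius automorphism of $\mathbb{D}$ sending $\varphi_{t}^{-1}(\Psi_{t}(z_{\star}))$ back to $0$, I can regard $g_{t}:=\widehat{\varphi}_{t}^{-1}\circ\Psi_{t}\circ\widehat{\varphi}_{t}$ as a homeomorphism of $\overline{\mathbb{D}}$ whose restriction to $\mathbb{S}^{1}=\partial\mathbb{D}$ is conjugate to $\Omega_{t}$; in particular $\rho\bigl(g_{t}|_{\mathbb{S}^{1}}\bigr)$ is the prime ends rotation number of $\Psi_{t}$ at $K_{t}$.

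Next I would prove convergence of the conjugacies. Fixing $t_{0}$ and a sequence $t_{n}\to t_{0}$, the Hausdorff convergence $K_{t_{n}}\to K_{t_{0}}$ yields convergence of the domains $W_{t_{n}}$ to $W_{t_{0}}$ in the sense of Carath\'eodory kernels with respect to $z_{\star}$; hence by the Carath\'eodory kernel convergence theorem $\varphi_{t_{n}}\to\varphi_{t_{0}}$ uniformly on compact subsets of $\mathbb{D}$ and $\varphi_{t_{n}}^{-1}\to\varphi_{t_{0}}^{-1}$ uniformly on compact subsets of $W_{t_{0}}$. Combined with the joint continuity of $(t,z)\mapsto\Psi_{t}(z)$, this gives $g_{t_{n}}\to g_{t_{0}}$ uniformly on compact subsets of $\mathbb{D}$. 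Finally I would apply the continuity of the rotation number on $\mathrm{Homeo}^{+}(\mathbb{S}^{1})$ with the uniform topology to conclude $\rho\bigl(g_{t_{n}}|_{\mathbb{S}^{1}}\bigr)\to\rho\bigl(g_{t_{0}}|_{\mathbb{S}^{1}}\bigr)$, i.e.\ continuity of the prime ends rotation numbers in $t$.

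The hard part will be the boundary behaviour: the step above only gives uniform convergence of $g_{t_{n}}$ on compact subsets of the \emph{open} disk, whereas the rotation number lives on $\partial\mathbb{D}$. To bridge this I would show that the family $\{g_{t_{n}}\}$ is equicontinuous on the closed disk $\overline{\mathbb{D}}$, by controlling the boundary moduli of continuity of the extended Riemann maps $\widehat{\varphi}_{t_{n}}$ through the geometry of the limit domain $W_{t_{0}}$; an Arzel\`a--Ascoli argument, together with the identification of the limit on the dense set $\mathbb{D}$, then forces $g_{t_{n}}\to g_{t_{0}}$ uniformly on $\overline{\mathbb{D}}$, hence uniformly on $\mathbb{S}^{1}$. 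This equicontinuity estimate is the technical heart of the argument and is precisely what is carried out in \cite{Barge}, so in the paper I would simply cite \cite[Proposition~2.2]{Barge} for it.
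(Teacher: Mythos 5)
The paper does not prove this statement; it imports it verbatim as \cite[Proposition~2.2]{Barge} and uses it as a black box, so there is no ``paper's own proof'' to compare against. You recognise this explicitly, and your sketch is an honest reconstruction attempt rather than something the paper relies on.

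Your proposal via Riemann uniformization, Carath\'eodory kernel convergence and continuity of the rotation number on $\mathrm{Homeo}^{+}(\mathbb{S}^{1})$ is a natural route, and steps one and two are essentially correct modulo a minor wrinkle: the M\"obius re-normalization you invoke to send $\varphi_t^{-1}(\Psi_t(z_\star))$ back to $0$ is unnecessary --- the conjugate $\widehat{\varphi}_t^{-1}\circ\Psi_t\circ\widehat{\varphi}_t$ is already a self-homeomorphism of $\overline{\mathbb{D}}$ since $\Psi_t(K_t)=K_t$, and conjugating by a M\"obius map does not change the boundary rotation number in any case.

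The real concern is the step you yourself flag as the technical heart: upgrading locally uniform convergence on $\mathbb{D}$ to uniform convergence of the boundary restrictions. You propose to get this from ``controlling the boundary moduli of continuity of $\widehat{\varphi}_t$ through the geometry of $W_{t_0}$,'' but this is not merely hard --- in the generality of the statement (and exactly in the case the present paper needs, where the $K_t$ are pseudo-circles, hence hereditarily indecomposable with extremely wild boundary structure) there is no uniform modulus of continuity for $\widehat{\varphi}_t$ along $\partial\mathbb{D}$ to be had, and the family $\{\widehat{\varphi}_{t_n}\}$ need not be equicontinuous on $\overline{\mathbb{D}}$. So the Arzel\`a--Ascoli step as written would not close. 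The standard escape is to avoid boundary convergence of the conjugating maps altogether: the rotation number of a circle homeomorphism is determined up to error $1/q$ by finite orbit segments of length $q$, and the displacement of a fixed chain of crosscuts under $\Psi_t^q$ (taken directly on the end-compactification of $D\setminus K_t$, without ever uniformizing) varies continuously in $(t,q)$ by the Hausdorff continuity of $K_t$ and the joint continuity of $(t,x)\mapsto\Psi_t(x)$. This gives continuity of the prime-end rotation number directly and robustly, which is the spirit of Barge's actual argument. Your instinct to simply cite \cite[Proposition~2.2]{Barge} in the paper rather than reprove it is therefore the right call.
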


Finally, let us define how we distinguish the embeddings from the dynamical perspective. 
In what follows we generalize the definition from \cite{BdCH} of equivalence of embeddings.

\begin{defn}\label{def:equivalent}
	Let $X$ and $Y$ be metric spaces. Suppose that $F:X\to X$ and $G:Y\to Y$ are homeomorphisms and $E:X\to Y$ is an embedding. If $E\circ F=G\circ E$ we say that the embedding $E$ is a {\em dynamical embedding} of $(X,F)$ into $(Y,G)$. If $E$, resp. $E'$, are dynamical embeddings of $(X,F)$ resp. $(X',F')$ into $(Y,G)$, resp. $(Y',G')$, and there is a homeomorphism $H:Y\to Y'$ so that $H(E(X))=E'(X')$ which conjugates $G|_{E(X)}$ with $G'|_{E'(X')}$ we say that the embeddings $E$ and $E'$ are {\em dynamically equivalent}.
\end{defn}
\begin{rem}
	In our case $Y=Y'=\mathbb{R}^2$ and $X,X'$ are the pseudo-circles. Thus, the dynamical equivalence from Definition~\ref{def:equivalent} induces a conjugacy on the ("outside") circles of prime ends without requiring that $H$ conjugates $G$ with $G'$ on all $\mathbb{R}^2$.
\end{rem}

We will also use the following definition and the subsequent basic facts about rotation numbers and rotation sets.

\begin{defn}
	We say that a point $x\in K\subset \mathbb{R}^2$ is {\em accessible} if there exists an arc $A\subset \mathbb{R}^2$ such that $A\cap K=\{x\}$ and $A\setminus \{x\}\subset \mathbb{R}^2\setminus K$.
\end{defn}
 If $f:\mathbb{S}^1\to \mathbb{S}^1$ is a map of degree $1$ and $\tilde f$ is its lift, then we define
$$
\overline{\Rot}_{\tilde f(x)}=\limsup_{m\to\infty}\frac{\tilde f^m(x)-x}{m}.
$$
It is well known that if $\tilde f$ is non-decreasing then $\limsup$ can be replaced by limit and it is independent of the choice of $x$. Define two monotone maps related to the lift $\tilde f$:
$$
\tilde f_l(x)=\inf\{\tilde f(y): y\geq x\}, \quad \tilde f_u(x)=\inf\{\tilde f(y): y\leq x\}.
$$
We define the \textit{rotation set of $\tilde f$} by
$$
\Rot(\tilde f):=\{\overline{\Rot}_{\tilde f}(x) : x\in \R\}.
$$
It is known that for any $x\in \R$ we have:
$$
\Rot(\tilde f)=[\overline{\Rot}_{\tilde f_l}(x),\overline{\Rot}_{\tilde f_u}(x)].
$$
The above definitions can be easily extended onto invariant sets into the annulus. The reader is referred to \cite{ALM} for more deeper treatment of rotation numbers and rotation sets.

Now let us prove the main theorem of this section.
\begin{proof}[Proof of Theorem \ref{lem:BBM1}]
Items (a) and (c) follow directly from Theorem 3.1 of \cite{3G-BM}. The small difference in our setting is that we do not have identity on the boundary of $D$, but it is a circle rotation. This, however, causes no problems since it is not a requirement of Brown's approximation theorem \cite{Br}, see also \cite{CO} for an analogous construction.

Now we will show that each $\Lambda_\beta$ is a Birkhoff-like attractor. Let $f:\Ci\to \Ci$ be the circle map of degree one as on Figure~\ref{fig:mapf} and $\tilde f$ its lift.
 It is clear that there are $n>0$ and $k\in \Z$ such that $\tilde f^n([0,1])\supset [k,k+2]$ and recursively
$\tilde f^{jn}([0,1])\supset [jk,j(k+1)+1]$. But then, there are $x,y\in [0,1]$ such that $\tilde f^{jn}(x)\in [jk,jk+1]$ and $\tilde f^{jn}(y)\in [j(k+1),j(k+1)+1]$
for every positive integer $j$.
This implies that $k/n$ and $(k+1)/n$ belong to $\Rot(\tilde f)$. In particular $\Rot(\tilde f)$ is a non-degenerate interval.
But then, the same property is shared with each $\tilde G_\beta$ for any $\beta$. Following the arguments from Section 4.2 in \cite{3G-BM}, we obtain that $\Rot(\tilde H_\beta)=\Rot(\tilde G_\beta)\cup \{r_{\beta}\}$ where $r_{\beta}$ is the rotation on the $\partial D$,
and $\Rot(\tilde H_\beta|_{\Lambda_\beta})=\Rot(\tilde G_\beta)$, in particular it is a non-degenerate interval. Thus we get that each $\Lambda_\beta$ is indeed a Birkhoff-like attractor.

Now we argue for a proof of (d). Using Theorem~\ref{thm:main} \eqref{main2} and Lemma~\ref{lem:periodic} we can find parameter $\beta_0$ such that $G_{\beta_0}(p)=p$ and a parameter  $\beta_{\frac{1}{2}}$ such that $G^2_{\beta_{\frac{1}{2}}}(p)=p$ and $G_{\beta_{\frac{1}{2}}}(p)\neq p$.  
Note that  $H_{\beta_0}(p,0)=(p,0)$, $H_{\beta_0}(p,k+2)=(p,k+2)$
and $H_{\beta_0}(p,t)\in \{(p,s), s\in [0,k+2]\}$ for each $t\in [0,k+2]$. Denote $A_0:=\{p\}\times [0,k+2]$.
Therefore, $H_{\beta_0}|_{A_0}$ is a well defined monotone surjection on $A_0$ and so $J_{0}:=h_{\beta_0}(\underleftarrow{\lim}(A_0,H_{\beta_0}|_{A_0}))$ defines an arc connecting $P_0:=h_{\beta}(((p,0),(p,0),(p,0),\ldots)))$ with a point of boundary of $D$ and intersects $\Lambda_{\beta_0}$ only at its endpoint $P_0$. Therefore, $P_0$ is an accessible point of $\Lambda_{\beta_0}$.
Let $\Lambda'_\beta$ denote the plane non-separating continua obtained from $\Lambda_{\beta}$ by filling in the inner component of $D\setminus \Lambda_{\beta}$.
Thus, Theorem 5.1 from \cite{Bre} implies that $\mathcal{P}_{0}$ (corresponding to accessible point $P_0$) is a fixed point of the induced homeomorphism $\Omega_{\beta_0}:\mathbb{P}_{\beta_0}\to\mathbb{P}_{\beta_0}$ on the ``outer'' circle of prime ends $\mathbb{P}_{\beta_0}$.
Therefore, it defines a  prime end in $ \mathbb{P}_{\beta_0}$ which is fixed under $\Omega_{\beta_0}$.  Thus, the prime ends rotation number of $\Omega_{\beta_0}$ is $0$.
\\
Now let us argue for $\beta_{\frac{1}{2}}$.
Note that  $H_{\beta_{\frac{1}{2}}}(p,0)=(p+1/2,0)$, $H_{\beta_{\frac{1}{2}}}(p,k+2)=(p+1/2,k+2)$
and $H_{\beta_{\frac{1}{2}}}(p,t)\in \{(p+1/2,s), s\in [0,k+2]\}$ for each $t\in [0,k+2]$. Similarly, $H_{\beta_{\frac{1}{2}}}(p+1/2,t)\in \{(p,s), s\in [0,k+2]\}$ for each $t\in [0,k+2]$.
Denote $A_1:=\{p\}\times [0,k+2]$. Therefore, $H^2_{\beta_{\frac{1}{2}}}|_{A_1}$ is a well defined monotone surjection on $A_1$ and so up to telescoping $J_1:=h_{\beta_{\frac{1}{2}}}(\underleftarrow{\lim}(A_1,H^2_{\beta_{\frac{1}{2}}}|_{A_1}))$ defines an arc connecting $P_1:=h_{\beta_{\frac{1}{2}}}(((p,0),(p+1/2,0),(p,0),(p+1/2,0),\ldots))$ with a point of boundary of $D$ and intersects $\Lambda_{\beta_{\frac{1}{2}}}$ only at its endpoint. Therefore, $P_1$ is an accessible point of $\Lambda_{\beta_{\frac{1}{2}}}$. By a symmetric argument $P'_1:=h_{\beta_{\frac{1}{2}}}(((p+1/2,0),(p,0),(p+1/2,0),(p,0),\ldots))$ is also an accessible point of $\Lambda_{\beta_{\frac{1}{2}}}$.

Now let us show that the rotation number of the induced prime end homeomorphism $\Omega_{\beta_{\frac{1}{2}}}:\mathbb{P}_{\beta_{\frac{1}{2}}}\to\mathbb{P}_{\beta_{\frac{1}{2}}}$ corresponding to $\Phi_{\beta_{\frac{1}{2}}}$ is $1/2$. Similarly as above we argue that there are prime ends $\mathcal{P}_1$ and $\mathcal{P}'_1$ in the corresponding ``outer'' circle of prime ends $\mathbb{P}_{\beta_{\frac{1}{2}}}$. By Theorem 3.2. from \cite{Bre}, if a point from an indecomposable continuum is accessible it corresponds to a unique prime end, thus $\mathcal{P}_1$ and $\mathcal{P}'_1$ are the only prime ends corresponding to accessible points $P_1$ and $P'_1$ respectively. Furthermore, Theorem 5.1 from \cite{Bre} implies that $\Omega^2_{\beta_{\frac{1}{2}}}(\mathcal{P}_1)=\mathcal{P}_1$ and $\Omega^2_{\beta_{\frac{1}{2}}}(\mathcal{P}'_1)=\mathcal{P}'_1$. We only need to exclude that $\Omega_{\beta_{\frac{1}{2}}}(\mathcal{P}_1)=\mathcal{P}_1$ (resp. $\Omega_{\beta_{\frac{1}{2}}}(\mathcal{P}'_1)=\mathcal{P}'_1$). But if $\Omega_{\beta_{\frac{1}{2}}}(\mathcal{P}_1)=\mathcal{P}_1$ (resp. $\Omega_{\frac{1}{2}}(\mathcal{P}'_1)=\mathcal{P}'_1$), the definition of the map $\Omega_{\beta_{\frac{1}{2}}}$ would imply that $P_1$ (resp. $P'_1$) and $\Phi_{\beta_{\frac{1}{2}}}(P_1)$ (resp. $\Phi_{\beta_{\frac{1}{2}}}(P'_1)$) have the same associated equivalence classes of sequences of crosscuts which leads to a contradiction.
This means that the prime ends rotation number associated to the homeomorphism $\Omega_{\beta_{\frac{1}{2}}}$ is $1/2$. Applying Lemma~\ref{Barge} we obtain item (d).

To show item (e) it suffices to use (d) and note that if $\Lambda_t$ and $\Lambda_{t'}$ for $t\neq t'$ are embedded dynamically equivalently, then also the prime end homeomorphisms $\Omega_t$ and  $\Omega_{t'}$ associated to $\Lambda_t$ and $\Lambda_{t'}$ are conjugated (because the associated equivalence classes of sequences of crosscuts are interchanged by the conjugating homeomorphism) which implies the equality of the associated prime ends rotation numbers.

Finally let us argue for (b). For a fixed $\beta$ take a set of generic points $Z_{\beta}\subset [0,1)$ with respect to $\lambda$ of $G_{\beta}$. Let $Z'_{\beta}$ be the image of $Z_{\beta}$ applying rotation by angle $p-G_{\beta}(p)$. Now, if we take set $Y_\beta=Z'_t\times (k+1,k+2)\subset D$ then it has positive two-dimensional Lebesgue measure.
But $H_\beta(Y_\beta)=Z_\beta\times \{0\}$, so it is a set of generic points with respect to $\lambda$ of $H_\beta$ restricted to middle circle of $\mathbb{A}$. So $Y_\beta$
is in the basin of attraction of $\lambda$ on $\mathbb{S}^1$. 

Now we refer to the \cite[Corollary 5.6]{BdCH}, which is an immediate corollary of results of Kennedy, Raines and Stockman \cite{KRS}.

\begin{thm}\label{thm:KRS}
Let $X$ be a compact space with Lebesgue measure $\lambda$, $f:X\to X$  continuous and surjective map and $\nu$ an $f$-invariant Borel probability measure. Then $\nu$ is a physical measure if and only if the induced $\hat f$-invariant measure $\hat{\nu}$ is inverse limit physical on $\underleftarrow{\lim}(X,f)$.
\end{thm}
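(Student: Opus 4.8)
The plan is to recover Theorem~\ref{thm:KRS} from \cite[Corollary~5.6]{BdCH} (itself built on \cite{KRS}); I sketch the underlying argument. Write $\hat X:=\underleftarrow{\lim}(X,f)$, let $\pi_0\colon\hat X\to X$ be the zeroth projection and $\pi_{[0,m]}\colon\hat X\to X^{m+1}$ the projection onto the first $m+1$ coordinates, and recall that $\pi_0\circ\hat f=f\circ\pi_0$, that $\pi_0$ is onto since $f$ is, and that $\hat\nu$ is the unique $\hat f$-invariant Borel probability measure with $(\pi_i)_*\hat\nu=\nu$ for all $i$. Let $B(\nu)=\{x\in X:\frac1n\sum_{k=0}^{n-1}\delta_{f^k(x)}\to\nu\}$ be the basin of $\nu$ and let $\hat B$ be the analogous basin of $\hat\nu$ for $\hat f$; by Definition~\ref{def:induced}, $\nu$ is physical iff $\lambda(B(\nu))>0$ and $\hat\nu$ is inverse limit physical iff there is $\hat C\subset\hat B$ with $\lambda(\pi_0(\hat C))>0$.

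For the implication ``$\hat\nu$ inverse limit physical $\Rightarrow$ $\nu$ physical'' I would push Birkhoff averages forward by $\pi_0$: if $\hat C\subset\hat B$ has $\lambda(\pi_0(\hat C))>0$, then for $\hat x\in\hat C$ and continuous $\varphi$ on $X$, $\frac1n\sum_{k<n}\varphi(f^k(\pi_0\hat x))=\frac1n\sum_{k<n}(\varphi\circ\pi_0)(\hat f^k\hat x)\to\int\varphi\circ\pi_0\,d\hat\nu=\int\varphi\,d\nu$, so $\pi_0(\hat C)\subset B(\nu)$ and hence $\lambda(B(\nu))>0$.

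For the converse I would take $\hat C:=\pi_0^{-1}(B(\nu))$, so that $\pi_0(\hat C)=B(\nu)$, and show $\hat C\subset\hat B$. Fix $\hat x=(x_0,x_1,\dots)\in\hat C$; since functions depending on finitely many coordinates are dense in $C(\hat X)$ it suffices to prove $(\pi_{[0,m]})_*\big(\frac1n\sum_{k<n}\delta_{\hat f^k\hat x}\big)\to(\pi_{[0,m]})_*\hat\nu$ for every $m$. For $k\ge m$ one has $\pi_{[0,m]}(\hat f^k\hat x)=(f^k x_0,f^{k-1}x_0,\dots,f^{k-m}x_0)$, so discarding the $O(m/n)$ contribution of the terms $k<m$ these averages coincide with $\frac1n\sum_k\delta_{(f^k x_0,\dots,f^{k-m}x_0)}$; testing against a product $\varphi_0\otimes\cdots\otimes\varphi_m$ this equals $\frac1n\sum_k\psi(f^{k-m}x_0)$ with $\psi(y)=\varphi_m(y)\varphi_{m-1}(fy)\cdots\varphi_0(f^m y)$ continuous, which, because $x_0\in B(\nu)$, tends to $\int\psi\,d\nu$. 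Finally $\int\psi\,d\nu=\int\varphi_0\otimes\cdots\otimes\varphi_m\,d(\pi_{[0,m]})_*\hat\nu$, since under $\hat\nu$ the coordinates satisfy $x_j=f^{m-j}(x_m)$ with $x_m\sim\nu$. Letting $m\to\infty$ gives $\hat x\in\hat B$, whence $\lambda(\pi_0(\hat B))\ge\lambda(B(\nu))>0$.

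The main obstacle is this second implication: one must control the \emph{entire} fibre $\pi_0^{-1}(B(\nu))$ — including points whose ``backward'' coordinates $x_1,x_2,\dots$ are arbitrary — rather than only a $\lambda$-typical slice of it. The structural fact that makes this possible, and which is the real content imported from \cite{KRS}, is that forward iteration of the natural extension asymptotically discards the tail, so that the finite-coordinate empirical statistics of $\hat f^k(\hat x)$ are governed solely by the forward $f$-orbit of $x_0$; the only genuine care needed is the routine bookkeeping of the first $m$ coordinates and the identification of the limiting self-joinings with the marginals $(\pi_{[0,m]})_*\hat\nu$.
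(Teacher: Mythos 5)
Your proof is correct and, as you yourself flag, it recovers what the paper merely imports by citation: the paper does not prove Theorem~\ref{thm:KRS} itself but invokes \cite[Corollary~5.6]{BdCH}, which rests on \cite{KRS}. Both of your directions are sound. The forward direction is the straightforward push-forward of Birkhoff averages along $\pi_0$, and the substantive step in the converse --- that $\hat x\in\hat B$ depends only on $\pi_0(\hat x)$ because $\hat f^k$ asymptotically overwrites the backward tail, so that $\pi_{[0,m]}(\hat f^k\hat x)=(f^kx_0,\dots,f^{k-m}x_0)$ for $k\geq m$ --- is precisely the structural input of \cite{KRS}. Your use of density of finitely determined functions (Stone--Weierstrass) and the identification $\int\psi\,d\nu=\int\varphi_0\otimes\cdots\otimes\varphi_m\,d(\pi_{[0,m]})_*\hat\nu$, via $z_j=f^{m-j}(z_m)$ with $z_m\sim\nu$ under $\hat\nu$, are both correct; the only minor implicit point is that $\pi_0$ is onto, which follows from surjectivity of $f$ and is needed for $\pi_0(\pi_0^{-1}(B(\nu)))=B(\nu)$.
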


 By Theorem~\ref{thm:KRS} the basin of attraction of $\lambda$ on $\mathbb{S}^1$ projects onto a physical Oxtoby-Ulam background measure. Since all maps $G_\beta$ preserve Lebesgue measure $\lambda$, the corresponding measures induced on the attractor
change weakly continuously, as a consequence of a more general Theorems~3.6 and 4.1 from \cite{BdCH1}.

Finally, let us show measure-theoretic weak mixing of $G_\beta$ for all $\beta$. By Theorem~4 from \cite{BCOT2} (which is a consequence of results of Bowen~\cite{Bowen}) piecewise $C^2$ leo maps from $C_{\lambda}(I)$ are measure-theoretic exact with respect to $\lambda$, hence
 maps $G_{\beta,\alpha_m,m}$ in our intermediate steps of the construction are measure-theoretic exact with respect to $\lambda$ for each $\beta\in \R$ (and thus also strongly and weakly mixing with respect to $\lambda$). Our aim is to show that if maps $G_{\beta,\alpha_m,m}$ converge sufficiently fast (i.e. $\eta_m$ converges sufficiently fast to zero), then the limit map $G_{\beta}$ is weakly mixing with respect to $\lambda$ for each $\beta\in [0,1]$. The proof is inspired by the proof of Theorem~2 in \cite{BCOT2}.\\
Let $\{h_j\}_{j \ge 1}$ be a countable, dense
collection  of continuous functions in $L^1(\mathbb{S}^1 \times \mathbb{S}^1)$.
Fix any $f\in \{G_{\beta,\alpha_m,m}\}_{\beta\in[0,1]}$ and introduce the following notation for Birkhoff sums
$$S^f_{\ell}h_j(x,y)  := \frac{1}{\ell} \sum_{k=0}^{\ell-1} h_j \big( (f \times f)^k(x,y)\big).$$

This map is weakly mixing, so we have
$$\lim_{\ell \to \infty} S^f_\ell h_j(x,y)  = \int_{\mathbb{S}^1 \times \mathbb{S}^1} h_j(s,t) \, d(\lambda(s) \times \lambda (t))$$
for all $j \ge 1$.

For each $n \ge 1$ there exists a set $E_{n,f} \subset \mathbb{S}^1 \times \mathbb{S}^1$  and a positive integer $\ell_{n,f} \ge n$ such that
$\lambda (E_{n,f}) > 1 - \frac1n$ and
\begin{equation}\label{est''}\Big |S^{f}_{\ell} h_j(x,y) - \int_{\mathbb{S}^1 \times \mathbb{S}^1} h_j(s,t) \,  d(\lambda(s) \times \lambda (t)) \Big | < \frac{1}{3n}
\end{equation}
for all $(x,y) \in E_{n,f}$, $1 \le j \le n$, and $\ell \ge \ell_{n,f}$.

For any $g\in C_{\lambda}(\Ci)$ by the triangular inequality we have:
$$\begin{array}{ll}
 \Big |S^{g}_{\ell_{m, f}} h_j(x,y)   -  \int_{\mathbb{S}^1 \times \mathbb{S}^1} h_j(s,t) \,  d(\lambda(s) \times \lambda (t)) \Big |    \le\\
 \qquad \Big |S^{g}_{\ell_{m, f}} h_j(x,y)  - S^{f}_{\ell_{m, f}} h_j(x,y) \Big | + 
 \Big |S^{f}_{\ell_{m, f}} h_j(x,y)  -   \int_{\mathbb{S}^1 \times \mathbb{S}^1} h_j(s,t) \,  d(\lambda(s) \times
 \lambda (t)) \Big |
 ,\end{array}$$
 so we can choose a small neighbourhood 
$B(f,\eps_{m,f})$ in $C_{\lambda}(\Ci)$ such that 
if $g \in B(f,\eps_{m,f})$ then
\begin{equation}\label{here''}\Big |S^{g}_{\ell_{m,f}} h_j(x,y)  -  \int_{\mathbb{S}^1 \times \mathbb{S}^1} h_j(s,t) \,  d(\lambda(s) \times \lambda (t)) \Big |    < \frac1n
\end{equation}
for $1 \le j\le m$ and all all $(x,y) \in E_{m,f}$.

Since $f=G_{\beta,\alpha_m,m}$ for some $\beta\in [0,1]$, there is some open set $\beta\ni U_f\subset [0,1]$
such that $\{G_{\alpha,\alpha_m,m}:\alpha \in U_f\}\subset B(f,\eps_{m,f}/2)$.
We can find a finite cover of $[0,1]$ by sets $U_{f_1},\ldots, U_{f_{j_n}}$.
Choose $\eps_m := \min \{\eps_{m,f_i}: 1 \le i \le j_m\}$. Then for any $\beta$
and any map $g\in B(G_{\beta,\alpha_m,m},\eps_n)$ condition \eqref{here''} is satisfied with $\ell_{m,f}$ with $f=f_i$ for some $1 \le i \le j_n$.
Denote $\ell_{m,\beta}:=\ell_{m,f}$
and $E_{m,\beta}:=E_{m,f}$
for some $f$ as above (it is not necessarily unique).

Now assume that sequence $\eta_m$ converges to zero sufficiently fast, so that $G_\beta\in B(G_{\beta,\alpha_m,m},\eps_m)$
for each $\beta$.

Consider
$$\mathcal{E}(\beta)  := \bigcap_{M=1}^\infty \bigcup_{m=M}^\infty  E_{m,\beta}.$$
Since $\lambda(E_{m,\beta}) >
1 - \frac{1}{m}$, it follows that  $\lambda(\mathcal{E}(\beta)) = 1$.

We have shown that 

\begin{equation}\label{13}\Big| S^{G_\beta}_{\ell_{m,\beta}} h_j(x,y)  -  \int_{\mathbb{S}^1 \times \mathbb{S}^1} h_j(s,t) \,  d(\lambda(s) \times \lambda (t)) \Big |    < \frac{1}{m}
\end{equation}
for all $(x,y) \in E_{m,\beta}$ and all $1 \le j\le m$.

The
$
\lim_{\ell \to \infty} S^{G_\beta}_{\ell} h_j(x,y)
$
exists for almost every $(x,y)$ by the Birkhoff ergodic theorem. Equation~\eqref{13} holds  infinitely often for every point of $\mathcal{E}(\beta)$  so we conclude that this limit must equal
 $\int_{\mathbb{S}^1 \times \mathbb{S}^1} h_j(s,t) \,  d(\lambda(s) \times \lambda (t))$ and thus
 $G_\beta \times G_\beta$ is ergodic, or equivalently $G_\beta$ is weakly mixing.

\end{proof}

\begin{rem}
Similarly as in \cite{CO}, while the embeddings from Theorem~\ref{lem:BBM1} are different dynamically we can not easily claim that they are different also from the topological point of view. On the other hand, result (d) from Theorem~\ref{lem:BBM1} implies that the parameter space $[0,1]$ is indeed not degenerate. It would be interesting to know how boundary dynamics of the family $\{\Lambda_t\}_{t\in [0,1]}$ looks like precisely (i.e. to understand the sets of accessible points and the prime ends structure), however we again leave this aspect of research as an open problem.
\end{rem}

\section{Acknowledgements}
This research was supported by IDUB program at AGH University.
Research of P. Oprocha was supported in part by National Science Centre, Poland (NCN), grant no. 2019/35/B/ST1/02239. J. \v Cin\v c was partially supported by the Slovenian research agency ARRS grant J1-4632.

\end{document}